\theoremstyle{plain}
\newtheorem{theorem}{Theorem}[section]
\newtheorem{lemma}[theorem]{Lemma}
\newtheorem{corollary}[theorem]{Corollary}
\newtheorem{prop}[theorem]{Proposition}
\newtheorem{conj}[theorem]{Conjecture}
\theoremstyle{remark}
\newtheorem{remark}[theorem]{Remark}
\newtheorem{example}[theorem]{Example}
\newtheorem*{note*}{Note}
\newtheorem*{remark*}{Remark}
\newtheorem*{example*}{Example}
\theoremstyle{definition}
\newtheorem*{definition*}{Definition}
\newtheorem*{hypothesis*}{Hypothesis}
\newtheorem*{assumptions*}{Assumptions}
\newtheorem{definition}[theorem]{Definition}
\newcommand{\Z}{\mathbb{Z}}
\newcommand{\R}{\mathbb{R}}
\newcommand{\Q}{\mathbb{Q}}
\newcommand{\C}{\mathbb{C}}
\newcommand{\N}{\mathbb{N}}
\newcommand{\Ann}{\mathrm{Ann}}
\newcommand{\Aut}{\mathrm{Aut}}
\newcommand{\Gal}{\mathrm{Gal}}
\newcommand{\GL}{\mathrm{GL}}
\newcommand{\rank}{\mathrm{rank}}
\newcommand{\cl}{\mathrm{cl}}
\newcommand{\Nrd}{\mathrm{Nrd}}
\newcommand{\End}{\mathrm{End}}
\newcommand{\Hom}{\mathrm{Hom}}
\newcommand{\Fitt}{\mathrm{Fitt}}
\newcommand{\Irr}{\mathrm{Irr}}
\newcommand{\Ind}{\mathrm{Ind}}
\newcommand{\PMod}{\mathrm{PMod}}
\newcommand{\Spec}{\mathrm{Spec}}
\newcommand{\ram}{\mathrm{ram}}
\newcommand{\id}{\mathrm{id}}
\DeclareFontFamily{U}{wncy}{}
    \DeclareFontShape{U}{wncy}{m}{n}{<->wncyr10}{}
    \DeclareSymbolFont{mcy}{U}{wncy}{m}{n}
    \DeclareMathSymbol{\Sha}{\mathord}{mcy}{"58}
\numberwithin{equation}{section}
\newcommand{\et}{\mathrm{\acute{e}t}}
\newcommand{\perf}{\mathrm{perf}}
\newcommand{\tor}{\mathrm{tor}}
\newcommand{\tf}{\mathrm{tf}}
\newcommand{\cone}{\mathrm{cone}}
\newcommand{\cok}{\mathrm{cok}}
\title{Annihilating wild kernels}
\author{Andreas Nickel}
\address{Universit\"{a}t Duisburg--Essen\\
	Fakult\"{a}t f\"{u}r Mathematik\\
	Thea-Leymann-Str. 9\\
	45127 Essen\\
	Germany}
\email{andreas.nickel@uni-due.de}
\urladdr{https://www.uni-due.de/$\sim$hm0251/english.html}
\subjclass[2010]{11R42, 19F27, 11R70}
\keywords{$K$-theory, wild kernels, equivariant Tamagawa number conjecture,  special $L$-values, Schneider's conjecture, annihilation}
\date{Version of 23rd September 2019}
\begin{document}

\begin{abstract}
Let $L/K$ be a finite Galois extension of number fields with Galois group $G$.
Let $p$ be an odd prime and $r>1$ be an integer. Assuming a conjecture of Schneider,
we formulate a conjecture that relates special values of equivariant Artin 
$L$-series at $s=r$ to the compact support cohomology of the 
\'etale $p$-adic sheaf $\Z_p(r)$. We show that our conjecture is
essentially equivalent to the $p$-part of the equivariant Tamagawa
number conjecture for the pair $(h^0(\Spec(L))(r), \Z[G])$.
We derive from this explicit constraints on the Galois module structure
of Banaszak's $p$-adic wild kernels.
\end{abstract}

\maketitle

\section{Introduction}

Let $L/K$ be a finite Galois extension of number fields with Galois group $G$.
To each finite set $S$ of places of $K$ containing all archimedean places,
one can associate a so-called `Stickelberger element'
$\theta_{S}$ in the center of the complex group algebra $\C[G]$.
This Stickelberger element is defined via $L$-values at zero of $S$-truncated Artin $L$-functions attached to the (complex) characters of $G$.
Let us denote the roots of unity of $L$ by $\mu_{L}$ and the class group of $L$ by $\cl_{L}$.
Assume that $S$ contains all finite primes of $K$ that ramify in $L/K$. Then it was independently
shown in \cite{MR525346}, \cite{MR524276} and \cite{MR579702} that when $G$ is abelian we have
\begin{equation}\label{eq:abelian-integrality}
\Ann_{\Z[G]} (\mu_{L}) \theta_{S} \subseteq \Z[G],
\end{equation}
where we denote by $\Ann_{\Lambda}(M)$ the annihilator ideal of $M$ regarded as a module over the ring $\Lambda$.
Now a conjecture of Brumer asserts that $\Ann_{\Z[G]} (\mu_{L}) \theta_{S}$ annihilates $\cl_{L}$.

Using $L$-values at integers $r<0$, one can define higher Stickelberger elements $\theta_{S}(r)$.
When $G$ is abelian, Coates and Sinnott \cite{MR0369322} conjectured that these elements can be used to construct annihilators of the higher
$K$-groups $K_{-2r}(\mathcal{O}_{L,S})$, where we denote by $\mathcal{O}_{L,S}$ the ring of $S(L)$-integers in $L$ for
any finite set $S$ of places  of $K$; here, we write $S(L)$ for the set of places of $L$ which lie above those in $S$.
Coates and Sinnott essentially proved a $p$-adic \'etale
cohomological version of their conjecture in the case
$K = \Q$. First results on the $K$-theoretic version are due to
Banaszak \cite{MR1154596, MR1219629} and
Nguyen Quang Do \cite{MR1208865}.
However if, for example, $L$ is totally real and $r$ is even, these conjectures merely predict that zero annihilates
$K_{-2r}(\mathcal{O}_{L,S})$ if $r<0$
and $\cl_{L}$ if $r=0$.

In the case $r=0$, Burns \cite{MR2845620} presented a universal theory of refined Stark conjectures. In particular,
the Galois group $G$ may be non-abelian, and he uses leading terms rather than values of Artin $L$-functions to construct
conjectural nontrivial annihilators of the class group. His conjecture thereby extends the aforementioned conjecture of Brumer
(we point out that there are different generalizations of Brumer's conjecture
due to the author \cite{MR2976321} and Dejou and Roblot
\cite{MR3208394}).
Similarly, in the case $r<0$ the author \cite{MR2801311} has formulated a conjecture on the annihilation of higher $K$-groups
which generalises the Coates--Sinnott conjecture
and a conjecture of Snaith \cite{MR2209286}.
More precisely, using leading terms at negative integers
a certain `canonical fractional Galois ideal' $\mathcal J_r^S$
is defined. It is then conjectured that for every odd prime $p$
and every $x \in \Ann_{\Z_p[G]}(K_{1-2r}(\mathcal{O}_{L,S})_{\tor} 
\otimes_{\Z} \Z_p)$
one has
\[
	\Nrd_{\Q_p[G]}(x) \cdot \mathcal{H}_p(G) \cdot \mathcal J_r^S \subseteq
	\Ann_{\Z_p[G]}(K_{-2r}(\mathcal{O}_{L,S}) \otimes_{\Z} \Z_p).
\]
Here, the subscript `tor' refers to the torsion submodule of 
$K_{1-2r}(\mathcal{O}_{L,S})$, we denote the reduced norm of 
any $x \in \Q_p[G]$ by $\Nrd_{\Q_p[G]}(x)$, and
$\mathcal H_p(G)$ denotes a certain `denominator ideal' (introduced
in \cite{MR2609173}; see \S \ref{subsec:denom-central-conductors}).

When $G$ is abelian and $r=1$, Solomon \cite{MR2382773}
has defined a certain ideal which he conjectures to annihilate
the $p$-part of the class group. This has recently been generalized
to arbitrary (finite) Galois groups $G$ by Castillo and Jones
\cite{MR3085153}.
All these annihilation conjectures are implied by appropriate
special cases of the equivariant Tamagawa number conjecture (ETNC)
formulated by Burns and Flach \cite{MR1884523}.

Now let $r>1$ be a positive integer. When $L/K$ is an abelian
extension of totally real fields and $r$ is even, Barrett \cite{barrett}
has defined a `Higher Solomon ideal' which he conjectures to annihilate
the $p$-adic wild kernel $K_{2r-2}^w(\mathcal{O}_{L,S})_p$ of Banaszak
\cite{MR1219629} (see also \cite{MR1208865}).
There is an analogue on `minus parts' 
when $L/K$ is an abelian 
CM-extension and $r$ is odd. Under the same conditions
Barrett and Burns \cite{MR3021451} have constructed conjectural
annihilators of the $p$-adic wild kernel via
integer values of $p$-adic Artin $L$-functions. This approach has been
further generalized to the non-abelian situation by
Burns and Macias Castillo \cite{MR3214286}.

In this paper we consider the most general case, where $L/K$ is an 
arbitrary (not necessarily abelian or totally real) 
Galois extension and $r>1$ is an
arbitrary integer. Let $G_L$ be the absolute Galois group of $L$. 
Assuming conjectures of Gross \cite{MR2154331}
and of Schneider \cite{MR544704}, we define a canonical
fractional Galois ideal $\mathcal J_r^S$ and conjecture that
for every $x \in \Ann_{\Z_p[G]}(\Z_p(r-1)_{G_L})$ we have that
\[
\Nrd_{\Q_p[G]}(x) \cdot \mathcal{H}_p(G) \cdot \mathcal J_r^S 
\subseteq \Ann_{\Z_p[G]}(K_{2r-2}^w(\mathcal{O}_{L,S})_p).
\]
Note that the conjectures of Gross and Schneider are known when $L$ is totally real and $r$ is even (see Theorem \ref{thm:Gross} and Theorem \ref{thm:Schneider} below, respectively). 
When in addition $L/K$ is abelian, we show that our conjecture
is compatible with Barrett's conjecture.

In order to show that our conjecture is implied by the appropriate
special case of the ETNC, we reformulate the ETNC 
for the pair $h^0(\Spec(L)(r), \Z[G])$ in 
the spirit of the `lifted root number conjecture' of
Gruenberg, Ritter and Weiss \cite{MR1687551} and the
`leading term conjectures' of Breuning and Burns \cite{MR2371375}.
Note that the leading term conjecture at $s=1$ is equivalent to
the ETNC for the pair $h^0(\Spec(L)(1), \Z[G])$ when Leopoldt's
conjecture holds (see \cite{MR2804251}), and that Schneider's
conjecture is a natural analogue when $r>1$.
This reformulation is more explicit than the rather involved
and general formulation of Burns and Flach \cite{MR1884523}.
This will actually occupy a large part of the paper and is 
interesting in its own right. Moreover, the
relation to the ETNC will lead
to a proof of our annihilation conjecture in several important cases.

In a little more detail, we modify the compact support cohomology of the 
\'etale $p$-adic sheaf $\Z_p(r)$ such that we obtain a complex which is acyclic
outside degrees $2$ and $3$. We show that this complex is a perfect 
complex of $\Z_p[G]$-modules provided that Schneider's conjecture holds.
Assuming Gross' conjecture we define a trivialization of this complex
that involves Soul\'e's $p$-adic Chern class maps \cite{MR553999}
and the Bloch--Kato exponential map \cite{MR1086888}. These data
define a refined Euler characteristic which our conjecture
relates to the special values of the equivariant Artin $L$-series at $s=r$
and determinants of a certain regulator map.
This relation is expressed as an equality in a relative algebraic $K$-group.

This article is organized as follows.
In \S \ref{sec:Higher-K-theory} we review the higher Quillen $K$-theory
of rings of integers in number fields. We discuss its relation
to \'etale cohomology and introduce Banaszak's wild kernels.
In \S \ref{sec:Schneider} we prove basic properties of
the compact support cohomology of the \'etale $p$-adic sheaf $\Z_p(r)$,
where $r>1$ is an integer. We recall Schneider's conjecture and provide
a reformulation in terms of Tate--Shafarevich groups
(which originates with Barrett \cite{barrett}). We then construct
the aforementioned complex of $\Z_p[G]$-modules which is perfect
when Schneider's conjecture holds.
We recall some background on relative algebraic $K$-theory
and in particular on refined Euler characteristics in \S \ref{sec:relative-K}.
We state Gross' conjecture on leading terms of Artin $L$-functions
at negative integers in \S \ref{sec:rationality} and give a reformulation
at positive integers by means of the functional equation.
In \S \ref{sec:LTC} we construct a trivialization of our
conjecturally perfect complex and formulate a leading term conjecture
at $s=r$ for every integer $r>1$. We show that our conjecture
is essentially equivalent to the ETNC for the pair
$h^0(\Spec(L)(r), \Z[G])$. 
Finally, in \S \ref{sec:wild-kernels} we define the 
canonical fractional Galois ideal and give a precise formulation
of our conjecture on the annihilation of $p$-adic wild kernels.
We show that this conjecture is implied by the leading term conjecture
of \S \ref{sec:LTC}. The relation to the ETNC then implies
that our conjectures hold in several important cases.
We also discuss the relation to a recent conjecture of
Burns, Kurihara and Sano \cite{BKS}.

\subsection*{Acknowledgements}
The author acknowledges financial support provided by the 
Deutsche Forschungsgemeinschaft (DFG) 
within the Collaborative Research Center 701
`Spectral Structures and Topological Methods in Mathematics'
and the Heisenberg programme (No.\, NI 1230/3-1).
The author is indebted to Grzegorz Banaszak for various stimulating discussions
concerning higher $K$-theory of rings of integers during a short stay
at Adam Mickiewicz University in Pozna\'n, Poland.
Finally, the author thanks the anonymous referees for their
valuable suggestions.

\subsection*{Notation and conventions}
All rings are assumed to have an identity element and all modules are assumed
to be left modules unless otherwise stated.
Unadorned tensor products will always denote tensor products over $\Z$.
If $K$ is a field, we denote its absolute Galois group by $G_K$.
For a module $M$ we write $M_{\tor}$ for its torsion submodule
and set $M_{\tf} := M / M_{\tor}$ which we regard as embedded into $\Q \otimes M$.
If $R$ is a ring, we write $M_{m \times n}(R)$ for the set of all 
$m \times n$ matrices with entries in $R$. We denote the group of invertible
matrices in $M_{n \times n}(R)$ by $\GL_n(R)$.

\section{Higher $K$-theory of rings of integers} \label{sec:Higher-K-theory}

\subsection{The setup} \label{sec:setup}
Let $L/K$ be a finite Galois extension of number fields with Galois group $G$. We write
$S_{\infty}$ for the set of archimedean places of $K$ and let $S$ be a finite set of places of $K$
containing $S_{\infty}$. We let $\mathcal O_{L,S}$ be the ring of $S(L)$-integers in $L$,
where $S(L)$ denotes the finite set of places of $L$ that lie above a place in $S$; we will abbreviate
$\mathcal O_{L,S_{\infty}}$ to $\mathcal O_{L}$.

For any place $v$ of $K$ we choose a place $w$ of $L$ above $v$ and write $G_w$ and $I_w$ for
the decomposition group and inertia subgroup of $L/K$ at $w$, respectively.
We denote the completions of $L$ and $K$ at $w$ and $v$ by $L_w$ and $K_v$, respectively,
and identify the Galois group of the extension $L_w / K_v$ with $G_w$.
We put
$\overline{G_w} := G_w / I_w$ which we identify with the Galois group of the residue field
extension which we denote by $L(w) / K(v)$. Finally, we let $\phi_w \in \overline{G_w}$ be the Frobenius
automorphism, and we denote the cardinality of $K(v)$ by $N(v)$.

\subsection{Higher $K$-theory}
For an integer $n \geq 0$  and a ring $R$ we write
$K_n(R)$ for the Quillen $K$-theory of $R$. In the case $R = \mathcal O_{L,S}$ or $R=L$
the groups $K_n(\mathcal O_{L,S})$ and $K_n(L)$ are equipped with a natural $G$-action and for every
integer $r>1$ the inclusion $\mathcal O_{L,S} \subseteq L$ induces an isomorphism of $\Z[G]$-modules
\begin{equation} \label{eqn:odd-iso}
	K_{2r-1}(\mathcal O_{L,S}) \simeq K_{2r-1}(L).
\end{equation}
Moreover, if $S'$ is a second finite set of places of $K$ containing $S$, then for every $r>1$ there is a natural exact sequence
of $\Z[G]$-modules
\begin{equation} \label{eqn:even-ses}
	0 \rightarrow K_{2r}(\mathcal O_{L,S}) \rightarrow K_{2r}(\mathcal O_{L,S'}) \rightarrow
	\bigoplus_{w \in S'(L) \setminus S(L)} K_{2r-1}(L(w)) \rightarrow 0.
\end{equation}
Both results, \eqref{eqn:odd-iso}
and \eqref{eqn:even-ses}, follow from work of Soul\'{e} \cite{MR553999},
see \cite[Chapter V, Theorem 6.8]{MR3076731}. We also note that sequence \eqref{eqn:even-ses}
remains left-exact in the case $r=1$.
The structure of the finite $\Z[\overline{G_w}]$-modules
$K_{2r-1}(L(w))$ has been determined by Quillen \cite{MR0315016} (see also
\cite[Chapter IV, Theorem 1.12 and Corollary 1.13]{MR3076731}) to be
\begin{equation} \label{eqn:K-theory_finitefield}
 K_{2r-1}(L(w)) \simeq \Z[\overline{G_w}] / (\phi_w - N(v)^r).
\end{equation}
If $S$ contains all places of $K$ that ramify in $L/K$, we thus have an isomorphism
of $\Z[G]$-modules
\begin{equation} \label{eqn:K-theory_finitefieldsum}
 \bigoplus_{w \in S'(L) \setminus S(L)} K_{2r-1}(L(w)) \simeq
 \bigoplus_{v \in S' \setminus S} \Ind_{G_w}^G \Z[G_w] / (\phi_w - N(v)^r),
\end{equation}
where we write $\Ind_U^G M := \Z[G] \otimes_{\Z[U]} M$ for any subgroup $U$ 
of $G$ and any $\Z[U]$-module $M$.
We also note that the even $K$-groups $K_{2r}(\mathbb F)$ of a finite field $\mathbb F$ all vanish.

\subsection{The regulators of Borel and Beilinson} \label{sec:Borel}
Let $\Sigma(L)$ be the set of embeddings of $L$ into the complex numbers $\C$; we then have
$|\Sigma(L)| = r_1 + 2 r_2$, where $r_1$ and $r_2$ are the number of real embeddings and the number
of pairs of complex embeddings of $L$, respectively. For an integer $k \in \Z$ we define
\[
 H_k(L) := \bigoplus_{\Sigma(L)} (2 \pi i)^{-k} \Z
\]
which is endowed with a natural $\Gal(\C / \R)$-action, diagonally on $\Sigma(L)$ and on $(2\pi i)^{-k}$.
The invariants of $H_k(L)$ under this action will be denoted by $H_k^+(L)$, and it is easily seen that
\begin{equation} \label{eqn:rank-Betti}
  d_k := \rank_{\Z}(H_{1-k}^+(L)) = \left\{ \begin{array}{lll}
                                            r_1 + r_2 & \mbox{ if } & 2 \nmid k\\
                                            r_2  & \mbox{ if } & 2 \mid k.
                                           \end{array}\right.
\end{equation}
Let $r>1$ be an integer. Borel \cite{MR0387496} has shown that the even $K$-groups
$K_{2r-2}(\mathcal{O}_L)$ (and thus $K_{2r-2}(\mathcal{O}_{L,S})$ for any $S$ as above
by \eqref{eqn:even-ses} and \eqref{eqn:K-theory_finitefield}) are finite,
and that the odd $K$-groups $K_{2r-1}(\mathcal{O}_L)$ are finitely generated abelian groups of
rank $d_r$. More precisely, Borel constructed regulator maps
\begin{equation} \label{eqn:regulator}
  \rho_r: K_{2r-1}(\mathcal{O}_L) \rightarrow H_{1-r}^+(L) \otimes \R
\end{equation}
with finite kernel. Its image is a full lattice in $H_{1-r}^+(L) \otimes \R$.
The covolume of this lattice is called the Borel regulator and will be denoted by $R_r(L)$.
Moreover, Borel showed that
\begin{equation} \label{eqn:Borels-rationality}
 \frac{\zeta_L^{\ast}(1-r)}{R_r(L)} \in \Q^{\times},
\end{equation}
where $\zeta_L^{\ast}(1-r)$ denotes the leading term at $s=1-r$ of the Dedeking zeta function $\zeta_L(s)$
attached to $L$.

\begin{remark}
In the context of the ETNC it is often more natural to work with Beilinson's regulator map \cite{MR760999}.
However, by a result of Burgos Gil \cite{MR1869655} Borel's regulator map is twice the
regulator map of Beilinson. As we will eventually work prime by prime and exclude the prime
$2$, there will be no essential difference which regulator map we use.
\end{remark}

\subsection{Derived categories and Galois cohomology} \label{sec:derived}
Let $\Lambda$ be a noetherian ring and $\PMod(\Lambda)$ be the category of all finitely
generated projective $\Lambda$-modules. We write $\mathcal D(\Lambda)$ for the derived category
of $\Lambda$-modules and $\mathcal C^b(\PMod(\Lambda))$ for the category of bounded complexes
of finitely generated projective $\Lambda$-modules.
Recall that a complex of $\Lambda$-modules is called perfect if it is isomorphic in $\mathcal D(\Lambda)$
to an element of $\mathcal C^b(\PMod(\Lambda))$.
We denote the full triangulated subcategory of $\mathcal D(\Lambda)$
comprising perfect complexes by $\mathcal D^{\perf}(\Lambda)$.

If $M$ is a $\Lambda$-module and $n$ is an integer, we write $M[n]$ for the complex
\[
	\cdots \rightarrow 0 \rightarrow 0 \rightarrow M \rightarrow 0 \rightarrow 0 \rightarrow \cdots,
\]
where $M$ is placed in degree $-n$. We will also use the following convenient notation:
When $t \geq 1$ and $n_1, \dots, n_t$ are integers,
we put
\[
	M\{n_1, \dots, n_t\} := \bigoplus_{i=1}^t M[-n_i].
\]
In particular, we have $M\{n\} = M[-n]$ and $M\{n_1, \dots, n_t\}[n] = M\{n_1-n, \dots, n_t-n\}$.

Recall the notation of \S \ref{sec:setup}. In particular, $L/K$ is a Galois extension
of number fields with Galois group $G$.
For a finite set $S$ of places of $K$ containing $S_{\infty}$
we let $G_{L,S}$ be the Galois group over $L$ of the maximal extension of $L$
that is unramified outside $S(L)$.
For any topological $G_{L,S}$-module $M$
we write $R\Gamma(\mathcal O_{L,S}, M)$ for the complex
of continuous cochains of $G_{L,S}$ with coefficients in $M$.
If $F$ is a field and $M$ is a topological $G_{F}$-module, we likewise define
$R\Gamma(F,M)$ to be the complex
of continuous cochains of $G_{F}$ with coefficients in $M$.

If $F$ is a global or a local field of characteristic zero, and $M$ is a discrete or
a compact $G_F$-module, then for $r \in \Z$ we denote the $r$-th Tate twist of $M$ by $M(r)$.
Now let $p$ be a prime and suppose that $S$ also contains all $p$-adic places of $K$.
Then we will particularly be interested in the complexes
$R\Gamma(\mathcal O_{L,S}, \Z_p(r))$ in $\mathcal D(\Z_p[G])$.
Note that for an integer $i$ the cohomology group in degree $i$ of
$R\Gamma(\mathcal O_{L,S}, \Z_p(r))$ naturally identifies with
$H^i_{\et} (\mathcal{O}_{L,S}, \Z_p(r))$, the $i$-th \'etale cohomology group of the affine scheme
$\Spec(\mathcal{O}_{L,S})$ with coefficients in the \'etale $p$-adic sheaf $\Z_p(r)$.

\subsection{$p$-adic Chern class maps}
Fix an odd prime $p$ and assume that $S$ contains $S_{\infty}$ and the set $S_p$ of
all $p$-adic places of $K$. Then for any integer $r > 1$ and $i=1,2$ Soul\'e \cite{MR553999}
has constructed canonical $G$-equivariant $p$-adic Chern class maps
\[
 ch^{(p)}_{r, i}: K_{2r-i}(\mathcal{O}_{L,S}) \otimes \Z_p \rightarrow H^i_{\et} (\mathcal{O}_{L,S}, \Z_p(r)).
\]
We need the following deep result.

\begin{theorem}[Quillen--Lichtenbaum Conjecture] \label{thm:Quillen--Lichtenbaum}
 Let $p$ be an odd prime. Then for any integer $r > 1$ and $i=1,2$ the
 $p$-adic Chern class maps $ch^{(p)}_{r, i}$ are isomorphisms.
\end{theorem}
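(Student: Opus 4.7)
The plan is to invoke the resolution of the Bloch--Kato (norm residue isomorphism) conjecture due to Voevodsky, with essential input from Rost, and to reduce the statement of the theorem to this black box via the standard chain of arguments sketched below. I would not attempt to prove Bloch--Kato from scratch; the concrete verifiable work sits in the two intermediate reductions.

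First, I would set up the motivic-to-\'etale comparison. By the motivic spectral sequence of Bloch--Lichtenbaum, Friedlander--Suslin and Levine there is a spectral sequence
\[
E_2^{a,b} = H^{a-b}_{\mathcal{M}}(\mathcal{O}_{L,S}, \Z(-b)) \Rightarrow K_{-a-b}(\mathcal{O}_{L,S})
\]
from motivic cohomology to Quillen $K$-theory, together with an analogous spectral sequence in \'etale motivic cohomology which, after $p$-completion, converges to continuous \'etale cohomology of $\Spec(\mathcal{O}_{L,S})$ with coefficients in $\Z_p(\ast)$. Soul\'e's $p$-adic Chern class maps $ch^{(p)}_{r,i}$ are realised on abutments by the change-of-topology map from the Zariski site to the \'etale site, implemented at the level of these two spectral sequences.

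Second, the Beilinson--Lichtenbaum conjecture asserts precisely that this change-of-topology map is an isomorphism on $E_2$-terms in a suitable range of bidegrees after reduction mod $p^n$. By a theorem of Suslin--Voevodsky, refined by Geisser--Levine, the Beilinson--Lichtenbaum conjecture at $p$ is equivalent to the Bloch--Kato conjecture at $p$: for every field $F$ of characteristic prime to $p$ the norm residue homomorphism $K^M_n(F)/p \to H^n_{\et}(F, \mu_p^{\otimes n})$ is an isomorphism for every $n \geq 0$. This is the theorem of Voevodsky--Rost that I would take as input.

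Third, to promote the $E_2$-level isomorphism to an isomorphism of abutments in precisely the bidegrees corresponding to $K_{2r-i}(\mathcal{O}_{L,S}) \otimes \Z_p$ for $i=1,2$ and $r \geq 2$, one uses that for odd $p$ the scheme $\Spec(\mathcal{O}_{L,S})$ has mod-$p$ \'etale cohomological dimension at most $2$ (recall that $S$ contains $S_\infty \cup S_p$, and that real places contribute trivially for odd $p$ by Tate). This bound forces the \'etale spectral sequence to degenerate in the relevant range, and a parallel analysis of the motivic side then yields the claimed isomorphism on $K_{2r-1}$ and $K_{2r-2}$. The main obstacle is of course Bloch--Kato itself, whose proof requires Rost's norm varieties and Voevodsky's motivic Steenrod operations and slice filtration; in a proof at this scale that piece is unavoidable as a black box, and the genuinely verifiable work lies in the spectral-sequence comparison of the first step together with the cohomological-dimension collapse of the third.
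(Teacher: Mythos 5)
Your proposal is correct and follows essentially the same route as the paper, which simply cites Soul\'e for surjectivity and Weibel's completion of the Quillen--Lichtenbaum conjecture (itself resting on the Rost--Voevodsky norm residue theorem via the motivic spectral sequence and the Beilinson--Lichtenbaum reduction you describe). Your sketch merely unpacks the content of that citation, with the same black box at its core, so there is nothing substantive to add.
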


\begin{proof}
 Soul\'e \cite{MR553999} proved surjectivity. Building on work of Rost and Voevodsky, Weibel
 \cite{MR2529300} completed the proof of the Quillen--Lichtenbaum Conjecture.
\end{proof}

\begin{corollary}
 Let $r>1$ be an integer and let $p$ be an odd prime. Then we have isomorphisms of $\Z_p[G]$-modules
 \[
   H^iR\Gamma(\mathcal O_{L,S}, \Z_p(r)) \simeq H^i_{\et}(\mathcal O_{L,S}, \Z_p(r)) \simeq
   \left\{ \begin{array}{lll}
          K_{2r-1}(\mathcal O_{L,S}) \otimes \Z_p & \mbox{ if } & i=1\\
          K_{2r-2}(\mathcal O_{L,S}) \otimes \Z_p & \mbox{ if } & i=2\\
          0 & \mbox{ if } & i \not=1,2.
         \end{array}
   \right.
 \]
\end{corollary}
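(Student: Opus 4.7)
The plan is to combine three ingredients: the tautological identification of the hypercohomology of the cochain complex with étale cohomology (already noted at the end of \S\ref{sec:derived}), the Quillen--Lichtenbaum theorem in the form of Theorem \ref{thm:Quillen--Lichtenbaum}, and a standard cohomological-dimension argument to handle the degrees outside $\{1,2\}$.

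First I would record that by the very definition of $R\Gamma(\mathcal O_{L,S}, \Z_p(r))$ as the complex of continuous $G_{L,S}$-cochains, one has a canonical identification
\[
H^i R\Gamma(\mathcal O_{L,S}, \Z_p(r)) \simeq H^i_{\et}(\mathcal O_{L,S}, \Z_p(r))
\]
of $\Z_p[G]$-modules for every $i$, which takes care of the first isomorphism in the statement. For $i = 1, 2$ the identification with $K_{2r-i}(\mathcal O_{L,S}) \otimes \Z_p$ is then just a restatement of Theorem \ref{thm:Quillen--Lichtenbaum}, since Soulé's Chern class maps $ch^{(p)}_{r,i}$ are $G$-equivariant and are isomorphisms by hypothesis.

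The substantive content is therefore the vanishing outside degrees $1$ and $2$. In degree $0$ we have $H^0_{\et}(\mathcal O_{L,S}, \Z_p(r)) = \Z_p(r)^{G_{L,S}}$. Since $S \supseteq S_p$, the field $L_S$ fixed by $G_{L,S}$ contains all $p$-power roots of unity, so the action of $G_{L,S}$ on $\Z_p(r)$ factors through the $r$-th power $\chi_{\mathrm{cyc}}^r$ of the cyclotomic character, whose image contains an element of the form $1 + p^a u$ with $u \in \Z_p^\times$ and $a \ge 1$. A short computation shows that $(1 + p^a u)^r - 1$ is a nonzero element of $\Z_p$ for every $r \ne 0$ (using that $p$ is odd to avoid the exceptional $2$-adic behaviour), and therefore acts injectively on $\Z_p$; hence the fixed module vanishes.

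For $i \ge 3$ I would invoke the classical result that for an odd prime $p$ and any finite set $S$ of places of $K$ containing $S_\infty \cup S_p$, the strict cohomological $p$-dimension of $G_{L,S}$ is at most $2$; this gives $H^i_{\et}(\mathcal O_{L,S}, \Z_p(r)) = 0$ for $i \ge 3$, as continuous cohomology with $\Z_p$-coefficients is computed by a Milnor-type short exact sequence from the cohomology with $\Z/p^n$-coefficients (all of which vanish in degrees $\ge 3$). The only mildly delicate point is the degree-zero vanishing, since one must note the hypothesis $r > 1$ (actually $r \neq 0$ suffices) and the oddness of $p$; everything else is essentially a bookkeeping exercise in identifying cohomology of cochains with étale cohomology and quoting Theorem \ref{thm:Quillen--Lichtenbaum}.
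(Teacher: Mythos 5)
Your proposal follows essentially the same route as the paper: degrees $1$ and $2$ are Theorem \ref{thm:Quillen--Lichtenbaum}, and the vanishing in degrees $\geq 3$ comes from the fact that $G_{L,S}$ has cohomological $p$-dimension $2$ for odd $p$ (\cite[Proposition 8.3.18]{MR2392026}); your explicit treatment of degree $0$ via the infinite image of the cyclotomic character on $G_{L,S}$ is a detail the paper leaves implicit, and it is correct. One caution: your appeal to ``the classical result that the \emph{strict} cohomological $p$-dimension of $G_{L,S}$ is at most $2$'' is wrong as literally stated --- $\mathrm{scd}_p(G_{L,S})=2$ is not classical, and since $H^3(G_{L,S},\Z)(p)\simeq H^2(G_{L,S},\Q_p/\Z_p)$ it would already imply Leopoldt-type statements. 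Fortunately your actual mechanism only uses $\mathrm{cd}_p(G_{L,S})\le 2$ for the torsion modules $\Z/p^n(r)$ together with the continuous-cohomology limit sequence; just note that in degree $3$ the relevant $\varprojlim^1$ term involves $H^2(G_{L,S},\Z/p^n(r))$, which vanishes in the limit because these groups are finite, not because they sit in degree $\ge 3$.
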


\begin{proof}
 This follows from Theorem \ref{thm:Quillen--Lichtenbaum} and the fact that the Galois group
 $G_{L,S}$ has cohomological $p$-dimension $2$ by \cite[Proposition 8.3.18]{MR2392026}.
\end{proof}

\subsection{$K$-theory of local fields} \label{sec:K-of-local-fields}
Let $p$ be a prime. For an integer $n \geq 0$ and a ring $R$ we write
$K_n(R; \Z_p)$ for the $K$-theory of $R$ with coefficients in $\Z_p$.
Now let $p$ be odd and let $w$ be a finite place of $L$.
We write $\mathcal{O}_w$ for the ring of integers in $L_w$.
If $w$ does not belong to $S_p(L)$,
then for $r>1$ and $i = 1,2$ we have isomorphisms of $\Z_p[G_w]$-modules
\[
 K_{2r-i}(\mathcal{O}_w; \Z_p) \simeq K_{2r-i}(L(w); \Z_p) \simeq \left(\Q_p / \Z_p(r-i+1) \right)^{G_{L_w}}.
\]
Here, the first isomorphism is a special case of
Gabber's Rigidity Theorem \cite[Chapter IV, Theorem 2.10]{MR3076731}.
As the even $K$-groups of a finite field vanish, the Universal Coefficient Theorem
\cite[Chapter IV, Theorem 2.5]{MR3076731} identifies $K_{2r-i}(L(w); \Z_p)$
with $K_{2r-1}(L(w)) \otimes \Z_p$ if $i=1$ and with $K_{2r-3}(L(w)) \otimes \Z_p$ if $i=2$.
Now \eqref{eqn:K-theory_finitefield} gives the second isomorphism.
Note that in particular $K_{2r-i}(\mathcal{O}_w; \Z_p)$ is a finite group.
We likewise have
\[
 \begin{array}{ccccc}
  H^1_{\et}(L_w, \Z_p(r)) & \simeq & H^0_{\et}(L_w, \Q_p / \Z_p(r)) & = & \left(\Q_p / \Z_p(r)\right)^{G_{L_w}},\\
  H^2_{\et}(L_w, \Z_p(r)) & \simeq & H^0_{\et}(L_w, \Q_p / \Z_p(1-r))^{\vee} & \simeq & \left(\Q_p / \Z_p(r-1)\right)^{G_{L_w}},
 \end{array}
\]
where $(-)^{\vee} := \Hom(-, \Q_p / \Z_p)$ denotes 
the Pontryagin dual and we have
used local Tate duality (see also \cite[Proposition 7.3.10]{MR2392026}
and the subsequent remark). This shows the case $w \not\in S_p(L)$ of the
following well-known theorem. The case $w \in S_p(L)$ is another instance of the Quillen--Lichtenbaum Conjecture
and has been proven by Hesselholt and Madsen \cite{MR1998478}.

\begin{theorem}[Gabber rigidity and Hesselholt-Madsen] \label{thm:local-Quillen--Lichtenbaum}
 Let $p$ be an odd prime and let $w$ be a finite place of $L$. Then for any integer $r > 1$ and $i=1,2$
 there are canonical isomorphisms of $\Z_p[G_w]$-modules
 \[
  K_{2r-i}(\mathcal{O}_w; \Z_p) \simeq H^i_{\et}(L_w, \Z_p(r)).
 \]
\end{theorem}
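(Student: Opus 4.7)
The plan is to treat the two cases $w \notin S_p(L)$ and $w \in S_p(L)$ separately, since they require very different inputs. In both cases the maps are constructed as local analogues of Soul\'e's $p$-adic Chern class maps, so their canonicity and $G_w$-equivariance are built in; the work lies in proving bijectivity.

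For the easy case $w \notin S_p(L)$, I would argue exactly as sketched in the discussion preceding the theorem, and just compile those ingredients into a single statement. First, Gabber's rigidity theorem (\cite[Chapter IV, Theorem 2.10]{MR3076731}) gives
\[
K_{2r-i}(\mathcal{O}_w;\Z_p) \simeq K_{2r-i}(L(w);\Z_p),
\]
because $p$ is invertible in the residue field $L(w)$. Since the even $K$-groups of a finite field vanish, the Universal Coefficient Theorem \cite[Chapter IV, Theorem 2.5]{MR3076731} identifies $K_{2r-i}(L(w);\Z_p)$ with $K_{2r-1}(L(w))\otimes\Z_p$ for $i=1$ and with $K_{2r-3}(L(w))\otimes\Z_p$ for $i=2$. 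Quillen's formula \eqref{eqn:K-theory_finitefield} then rewrites these as $(\Q_p/\Z_p(r))^{G_{L_w}}$ and $(\Q_p/\Z_p(r-1))^{G_{L_w}}$, respectively. On the cohomological side, since $w\nmid p$, we have $H^i_{\et}(L_w,\Z_p(r))\simeq H^{i-1}_{\et}(L_w,\Q_p/\Z_p(r))$ in degrees $i=1,2$ (as $\Q_p(r)$ has no $G_{L_w}$-invariants and trivial Galois cohomology in higher degrees for an unramified module); in degree $2$ one combines this with local Tate duality to obtain $(\Q_p/\Z_p(r-1))^{G_{L_w}}$. Matching the two sides and verifying that Soul\'e's local Chern class map realises these identifications is essentially formal.

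For the hard case $w \in S_p(L)$, I would appeal to the work of Hesselholt and Madsen \cite{MR1998478}. Their theorem computes $K_*(\mathcal{O}_w;\Z_p)$ via topological cyclic homology and identifies it, in the relevant degrees, with the $p$-adic \'etale cohomology of $L_w$ through a comparison map that agrees (up to canonical identifications) with the $p$-adic Chern class map. Concretely, the cyclotomic trace $K(\mathcal{O}_w)\to TC(\mathcal{O}_w;p)$ becomes a $p$-adic equivalence after a suitable truncation in the range relevant for $K_{2r-i}$ with $i=1,2$, and Hesselholt--Madsen identify $TC$ with a $p$-adic cohomology theory that is naturally isomorphic to $H^i_{\et}(L_w,\Z_p(r))$. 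The $G_w$-equivariance comes from the functoriality of the cyclotomic trace under the extension $L_w/K_v$.

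The main obstacle is of course the $p$-adic case, which cannot be handled by elementary Galois-cohomological manipulations: it genuinely depends on the deep theorem of Hesselholt and Madsen (a local form of the Quillen--Lichtenbaum conjecture in mixed characteristic). Beyond invoking that result, the only remaining verification is that the isomorphism they produce is compatible with the Chern class map used globally, so that the statement can later be spliced into the localisation sequence for $R\Gamma(\mathcal{O}_{L,S},\Z_p(r))$; this compatibility is standard functoriality of the cyclotomic trace.
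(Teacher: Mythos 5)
Your proposal is correct and follows essentially the same route as the paper: the case $w \notin S_p(L)$ is handled by exactly the ingredients the paper assembles before the theorem (Gabber rigidity, the Universal Coefficient Theorem, Quillen's computation of $K_{2r-1}(L(w))$, and local Tate duality identifying both sides with $(\Q_p/\Z_p(r))^{G_{L_w}}$ and $(\Q_p/\Z_p(r-1))^{G_{L_w}}$), while the case $w \in S_p(L)$ is, as in the paper, delegated to the theorem of Hesselholt and Madsen. Your additional remarks on the cyclotomic trace and $TC$ merely gloss how that cited result is proved and do not change the argument.
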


\subsection{Wild Kernels}
Let $p$ be an odd prime and let $S$ be a finite set of places of $K$ containing all archimedean and all $p$-adic places.  
The following definition is due to Banaszak \cite{MR1219629}
(a variant has been defined slightly earlier by Nguyen Quang Do
\cite{MR1208865}).

\begin{definition}
 Let $r>1$ be an integer.
 The kernel of the natural map
 \[
  K_{2r-2}(\mathcal{O}_{L,S}) \otimes \Z_p \rightarrow \bigoplus_{w \in S(L)} H^2_{\et}(L_w, \Z_p(r))
 \]
 is called the \emph{$p$-adic wild kernel} and will be denoted by $K^w_{2r-2}(\mathcal{O}_{L,S})_p$.
\end{definition}

\begin{remark}
 This can be described in purely $K$-theoretic terms as follows.
 As $p$ is odd, the cohomology groups $H^2_{\et}(L_w, \Z_p(r))$ vanish for archimedean $w$.
 Thus Theorem \ref{thm:local-Quillen--Lichtenbaum} implies that
 $K^w_{2r-2}(\mathcal{O}_{L,S})_p$ identifies with the kernel of the map
 \[
  K_{2r-2}(\mathcal{O}_{L,S}) \otimes \Z_p \rightarrow
  \bigoplus_{w \in S(L) \setminus S_{\infty}(L)} K_{2r-2}(\mathcal{O}_w; \Z_p).
 \]
\end{remark}

\begin{remark} \label{rem:wild-independence}
 Let $S'$ be a second finite set of places of $K$ such that $S \subseteq S'$.
 As we have observed in \S \ref{sec:K-of-local-fields}, we have isomorphisms
 \[
  K_{2r-2}(\mathcal{O}_w; \Z_p) \simeq K_{2r-2}(L(w); \Z_p) \simeq K_{2r-3}(L(w)) \otimes \Z_p
 \]
 for every $w \in S'(L) \setminus S(L)$. Taking sequence \eqref{eqn:even-ses} into account,
 a diagram chase shows that the $p$-adic wild kernel $K^w_{2r-2}(\mathcal{O}_{L,S})_p$
 does in fact not depend on the set $S$.
\end{remark}

\section{The conjectures of Leopoldt and Schneider} \label{sec:Schneider}

\subsection{Local Galois cohomology}
We keep the notation of \S \ref{sec:setup}. In particular, 
$L/K$ is a finite Galois extension
of number fields with Galois group $G$. Let $p$ be an odd prime.
We denote the (finite) set of places of $K$ that ramify in $L/K$ by $S_{\ram}$ and let $S$
be a finite set of places of $K$ containing $S_{\ram}$ and all archimedean and $p$-adic places
(i.e.~$S_{\infty} \cup S_p \cup S_{\ram} \subseteq S$).

Let $M$ be a topological $G_{L,S}$-module.
Then $M$ becomes a topological $G_{L_w}$-module
for every $w \in S(L)$ by restriction. 
For any $i \in \Z$ we put
\[
 P^i(\mathcal O_{L,S}, M) := \bigoplus_{w \in S(L)} H^i_{\et}(L_w, M).
\]
We write $S_f$ for the subset of $S$ comprising all finite places in $S$.

\begin{lemma} \label{lem:local-cohomology}
 Let $r>1$ be an integer. Then we have isomorphisms of $\Z_p[G]$-modules
 \[
  P^i(\mathcal O_{L,S}, \Z_p(r)) \simeq \left\{
    \begin{array}{ll}
     H_{-r}^+(L) \otimes \Z_p & \mbox{ if } i=0\\
     \bigoplus_{w \in S_f(L)} K_{2r-1}(\mathcal O_w; \Z_p) & \mbox{ if } i=1\\
     \bigoplus_{w \in S_f(L)} K_{2r-2}(\mathcal O_w; \Z_p) & \mbox{ if } i=2\\
     0 & \mbox{ otherwise.}
    \end{array}
  \right.
 \]
\end{lemma}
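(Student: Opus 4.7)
The plan is to decompose $P^i(\mathcal O_{L,S}, \Z_p(r))$ according to the decomposition $S(L) = S_\infty(L) \sqcup S_f(L)$, handle each piece in each cohomological degree separately, and invoke Theorem \ref{thm:local-Quillen--Lichtenbaum} at the finite places in degrees $1$ and $2$.

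For a finite place $w$, I first observe that $H^0_{\et}(L_w, \Z_p(r)) = \Z_p(r)^{G_{L_w}}$ vanishes when $r \geq 1$: the restriction of the cyclotomic character to $G_{L_w}$ has infinite image in $\Z_p^{\times}$, hence so does $\chi_{\mathrm{cyc}}^r$, so one may choose $g \in G_{L_w}$ with $\chi_{\mathrm{cyc}}^r(g) - 1 \in \Z_p \setminus \{0\}$, forcing $\Z_p(r)^{G_{L_w}} = 0$. For $i > 2$ vanishing follows from the fact that $G_{L_w}$ has cohomological $p$-dimension at most $2$ (standard for local fields of characteristic zero, and trivial at archimedean places since $p$ is odd). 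For $i = 1, 2$ Theorem \ref{thm:local-Quillen--Lichtenbaum} directly supplies the isomorphism $H^i_{\et}(L_w, \Z_p(r)) \simeq K_{2r-i}(\mathcal O_w; \Z_p)$, and the direct sum of these over $w \in S_f(L)$ gives the claimed expression in degrees $1$ and $2$.

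For an archimedean place $w$ the group $G_{L_w}$ is trivial (complex $w$) or has order $2$ (real $w$), so with $p$ odd the cohomology vanishes in all positive degrees; in particular archimedean places contribute nothing in degrees $1$ and $2$. In degree $0$, I use the standard identification $\Z_p(r) \cong (2 \pi i)^r \otimes_{\Z} \Z_p$, which is $\Gal(\C/\R)$-equivariant with complex conjugation acting by $(-1)^r$ on both sides. Summing over $w \in S_\infty(L)$ and using the $G$-equivariant bijection between $S_\infty(L)$ and the set of $\langle c \rangle$-orbits on $\Sigma(L)$ (under which the stabilizer of an orbit corresponds to $G_{L_w}$) then identifies $P^0(\mathcal O_{L,S}, \Z_p(r))$ with $H_{-r}^+(L) \otimes \Z_p$.

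The main, and only mildly subtle, point is verifying that the archimedean degree-zero identification is compatible with both the $G$-action permuting the places and the $\Gal(\C/\R)$-action defining $H_{-r}^+(L)$; once conventions for the embedding $\Z_p(r) \hookrightarrow \C$ are fixed this amounts to unwinding definitions.
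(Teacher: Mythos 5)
Your proposal is correct and follows essentially the same route as the paper's proof: the $(2\pi i)^r$-identification at the archimedean places in degree $0$, vanishing in positive degrees at archimedean places since $p$ is odd, cohomological dimension $2$ of $G_{L_w}$ for $i>2$, and Theorem \ref{thm:local-Quillen--Lichtenbaum} at the finite places in degrees $1$ and $2$. Your explicit cyclotomic-character argument for the vanishing of $H^0_{\et}(L_w,\Z_p(r))$ at finite $w$ is a correct elaboration of a point the paper leaves implicit.
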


\begin{proof}
 We first observe that $H^0_{\et}(L_w, \Z_p(r))$ vanishes unless $w$ is a complex place or
 $w$ is a real place and $r$ is even, whereas in these cases we have
 $H^0_{\et}(L_w, \Z_p(r)) = \Z_p(r)$. Thus the isomorphism
 \[
  \bigoplus_{\Sigma(L)} \Z_p(r) \simeq \left(\bigoplus_{\Sigma(L)}(2\pi i)^r \Z \right) \otimes \Z_p
 \]
 that maps a generator of $\Z_p(r)$ to $(2 \pi i)^r$ restricts to an isomorphism
 \[
   P^0(\mathcal O_{L,S}, \Z_p(r)) = \bigoplus_{w \in S_{\infty}(L)} H^0_{\et}(L_w, \Z_p(r)) \simeq H_{-r}^+(L) \otimes \Z_p.
 \]
 Now let $i>0$. As $p$ is odd, it is clear that $H^i_{\et}(L_w, \Z_p(r))$ vanishes for
 all archimedean $w$. Now let $w$ be a finite place of $L$. Since the cohomological dimension
 of $G_{L_w}$ equals $2$ by \cite[Theorem 7.1.8(i)]{MR2392026}, we have
 $H^i_{\et}(L_w, \Z_p(r)) = 0$ for $i >2$.
 The remaining cases now follow from Theorem \ref{thm:local-Quillen--Lichtenbaum}.
\end{proof}

\begin{corollary} \label{cor:local-rank}
 Let $r>1$ be an integer. Then
 \[
  \rank_{\Z_p} \left(P^i(\mathcal O_{L,S}, \Z_p(r))\right) = \left\{
    \begin{array}{ll}
     d_{r+1} & \mbox{ if } i=0 \\
     \left[L:\Q\right] & \mbox{ if } i=1 \\
     0 & \mbox{ otherwise.}
    \end{array}
  \right.
 \]
\end{corollary}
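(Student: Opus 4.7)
The plan is to read the three nonzero cases off Lemma \ref{lem:local-cohomology} and compute $\Z_p$-ranks summand by summand; vanishing for $i \notin \{0,1,2\}$ is immediate from that lemma.

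For $i = 0$, the identification $P^0(\mathcal O_{L,S}, \Z_p(r)) \simeq H_{-r}^+(L) \otimes \Z_p$ reduces the computation to \eqref{eqn:rank-Betti}: taking $k = r+1$ (so that $1-k = -r$) yields $\rank_{\Z_p} P^0 = d_{r+1}$, as claimed.

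For $i = 2$, I would argue that every summand $K_{2r-2}(\mathcal O_w; \Z_p)$ is finite. For $w \in S_f(L) \setminus S_p(L)$ this is the explicit description recalled in \S \ref{sec:K-of-local-fields}. For $w \in S_p(L)$, Theorem \ref{thm:local-Quillen--Lichtenbaum} identifies the summand with $H^2_{\et}(L_w, \Z_p(r))$, which by local Tate duality is Pontryagin dual to $H^0_{\et}(L_w, \Q_p/\Z_p(1-r))$; finiteness follows since the cyclotomic character raised to the power $1-r \neq 0$ has infinite image on $G_{L_w}$, so the fixed subgroup of $\Q_p/\Z_p(1-r)$ is finite. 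Hence $\rank_{\Z_p} P^2 = 0$.

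The only case with actual content is $i = 1$. For $w \in S_f(L) \setminus S_p(L)$ the summand is again finite by \S \ref{sec:K-of-local-fields}. For $w \in S_p(L)$ I would combine Theorem \ref{thm:local-Quillen--Lichtenbaum} with the local Euler--Poincar\'e characteristic formula applied to the finitely generated $\Z_p$-module $\Z_p(r)$ over the $p$-adic field $L_w$,
\[
\sum_{j=0}^{2} (-1)^j \rank_{\Z_p} H^j_{\et}(L_w, \Z_p(r)) = -[L_w : \Q_p],
\]
and use that the $j = 0$ and $j = 2$ terms are torsion (by the argument of the previous paragraph, applied similarly to $\Z_p(r)^{G_{L_w}}$) to conclude $\rank_{\Z_p} H^1_{\et}(L_w, \Z_p(r)) = [L_w : \Q_p]$. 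Summing over $w \in S_p(L)$ and using the standard identity $\sum_{w \in S_p(L)} [L_w : \Q_p] = [L : \Q]$ produces the claimed rank $[L:\Q]$. The only step that is not pure bookkeeping is the appeal to the Euler--Poincar\'e formula for the non-torsion coefficient module $\Z_p(r)$, combined with the finiteness of the outer cohomology terms; everything else is routine once Lemma \ref{lem:local-cohomology} is in hand.
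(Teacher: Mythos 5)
Your proposal is correct, and for everything except the $p$-adic summands in degree one it coincides with the paper's argument: degree zero is read off Lemma \ref{lem:local-cohomology} together with \eqref{eqn:rank-Betti}, and the non-$p$-adic summands are finite by the discussion in \S \ref{sec:K-of-local-fields}. The one point of genuine content --- that $\rank_{\Z_p} K_{2r-1}(\mathcal O_w;\Z_p) = [L_w:\Q_p]$ for $w \in S_p(L)$ --- is where you diverge: the paper simply cites the computation of the $K$-theory of local fields (Weibel, Chapter VI, Theorem 7.4), staying entirely on the $K$-theoretic side of Lemma \ref{lem:local-cohomology}, whereas you pass back through Theorem \ref{thm:local-Quillen--Lichtenbaum} to $H^1_{\et}(L_w,\Z_p(r))$ and apply Tate's local Euler--Poincar\'e characteristic formula, using that $H^0$ and $H^2$ have $\Z_p$-rank zero (the latter via local Tate duality and the nontriviality of the character $\chi_{\mathrm{cyc}}^{1-r}$ on $G_{L_w}$). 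Both are sound; your route trades the external $K$-theoretic input for a standard Galois-cohomological one and is in spirit closer to the paper's later Lemma \ref{lem:local-cohomology-Qp}, where the same rank is recovered via the Bloch--Kato exponential, while the paper's citation is shorter and keeps the proof uniform with the description of the local terms already set up in \S \ref{sec:K-of-local-fields}. Your finiteness argument for $H^2_{\et}(L_w,\Z_p(r))$ at $p$-adic places is also fine and matches the duality computation the paper performs later in Proposition \ref{prop:cs-cohomology}.
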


\begin{proof}
 In degree zero the result follows from Lemma \ref{lem:local-cohomology} and the definition of $d_{r+1}$.
 We have already observed that the groups $K_{2r-i}(\mathcal O_w; \Z_p)$ are finite for $i=1,2$
 and all finite places $w$ of $L$ which are not $p$-adic. If $w$ belongs to $S_p(L)$, then
 $K_{2r-2}(\mathcal O_w; \Z_p)$ is finite, whereas $K_{2r-1}(\mathcal O_w; \Z_p)$
 has $\Z_p$-rank $[L_w: \Q_p]$ by \cite[Chapter VI, Theorem 7.4]{MR3076731}.
 The result for $i \not=0$ now follows again from Lemma \ref{lem:local-cohomology} and
 the formula $[L:\Q] = \sum_{w \in S_p(L)} [L_w:\Q_p]$.
\end{proof}

For any integers $r$ and $i$ we define $P^i(\mathcal O_{L,S}, \Q_p(r))$ to be
$P^i(\mathcal O_{L,S}, \Z_p(r)) \otimes_{\Z_p} \Q_p$.
The following result is also proven in \cite[Lemma 5.2.4]{barrett}.

\begin{lemma} \label{lem:local-cohomology-Qp}
 Let $r>1$ be an integer. Then we have isomorphisms of $\Q_p[G]$-modules
 \[
  P^i(\mathcal O_{L,S}, \Q_p(r)) \simeq \left\{
    \begin{array}{ll}
     H_{-r}^+(L) \otimes \Q_p & \mbox{ if } i=0\\
     L \otimes_{\Q} \Q_p & \mbox{ if } i=1\\
     0 & \mbox{ otherwise.}
    \end{array}
  \right.
 \]
\end{lemma}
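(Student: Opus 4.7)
The plan is to leverage Lemma \ref{lem:local-cohomology} and reduce the assertion to a computation of the rational part of each local summand. For $i \geq 2$ the integral groups in Lemma \ref{lem:local-cohomology} are finite (this is already exploited in the proof of Corollary \ref{cor:local-rank}), so they vanish after tensoring with $\Q_p$. For $i = 0$, tensoring Lemma \ref{lem:local-cohomology} with $\Q_p$ immediately yields $H_{-r}^+(L) \otimes \Q_p$, with no further work required.

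The substantive case is $i = 1$. I would split the direct sum $\bigoplus_{w \in S_f(L)} K_{2r-1}(\mathcal O_w; \Z_p)$ according to whether $w$ lies above $p$. For $w \notin S_p(L)$ the contribution $K_{2r-1}(\mathcal O_w; \Z_p)$ is finite by the description recalled in \S \ref{sec:K-of-local-fields}, so these terms disappear after rationalization. For $w \in S_p(L)$, Theorem \ref{thm:local-Quillen--Lichtenbaum} identifies $K_{2r-1}(\mathcal O_w; \Z_p)$ with $H^1_{\et}(L_w, \Z_p(r))$, and the Bloch--Kato exponential map \cite{MR1086888} induces an isomorphism
\[
\exp_{BK} : L_w \stackrel{\sim}{\longrightarrow} H^1_{\et}(L_w, \Q_p(r))
\]
of $\Q_p[G_w]$-modules for $r \geq 2$ (which holds because $r > 1$).

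Finally, I would assemble these local isomorphisms using the canonical $\Q_p[G]$-equivariant decomposition $L \otimes_\Q \Q_p \simeq \bigoplus_{v \in S_p} L \otimes_K K_v \simeq \bigoplus_{w \in S_p(L)} L_w$, where the second identification comes from the fact that $L \otimes_K K_v \simeq \prod_{w \mid v} L_w$ and that $G$ permutes the places of $L$ above a fixed $v$ transitively. Summing the $\exp_{BK}$ isomorphisms over $w \in S_p(L)$ then produces the desired isomorphism $P^1(\mathcal O_{L,S}, \Q_p(r)) \simeq L \otimes_\Q \Q_p$.

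The main obstacle is the $p$-adic contribution: identifying $H^1_{\et}(L_w, \Q_p(r))$ with $L_w$ as a $\Q_p[G_w]$-module requires genuine input from $p$-adic Hodge theory via the Bloch--Kato exponential. Once this is in hand (and the vanishing of $H^1_f$ corrections for $r \geq 2$ is noted), everything else is bookkeeping with finite groups and the standard decomposition of $L \otimes_\Q \Q_p$.
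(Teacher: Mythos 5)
Your proposal is correct and follows essentially the same route as the paper: the cases $i \neq 1$ are handled by Lemma \ref{lem:local-cohomology} together with the finiteness observations of Corollary \ref{cor:local-rank}, and for $i=1$ the paper likewise discards the finite contributions away from $p$ and identifies $H^1_{\et}(L_w,\Q_p(r))$ with $L_w = D_{dR}^{L_w}(\Q_p(r))$ for $w \in S_p(L)$ via the Bloch--Kato exponential (citing \cite[Corollary 3.8.4 and Example 3.9]{MR1086888}), before summing over $w \mid p$ to obtain $L \otimes_{\Q} \Q_p$. Your extra remarks on the $G$-equivariant decomposition $L \otimes_{\Q} \Q_p \simeq \bigoplus_{w \in S_p(L)} L_w$ and on why $r>1$ makes the exponential surjective onto all of $H^1$ are exactly the points the paper leaves implicit.
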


\begin{proof}
 This follows from Lemma \ref{lem:local-cohomology} and Corollary \ref{cor:local-rank}
 unless $i=1$. To handle this case we let $w \in S_p(L)$ and put
 $D_{dR}^{L_w}(\Q_p(r)) := H^0(L_w, B_{dR} \otimes_{\Q_p} \Q_p(r))$,
 where $B_{dR}$ denotes Fontaine's de Rham period ring.
 Then the Bloch--Kato exponential map
 \[
  \exp_r^{BK}: L_w = D_{dR}^{L_w}(\Q_p(r)) \rightarrow H^1_{\et}(L_w, \Q_p(r))
 \]
 is an isomorphism for every $w \in S_p(L)$ as follows from \cite[Corollary 3.8.4 and Example 3.9]{MR1086888}. Thus we
 have isomorphisms of $\Q_p[G]$-modules
 \[
  P^1(\mathcal O_{L,S}, \Q_p(r)) \simeq \bigoplus_{w \in S_p(L)} H^1_{\et}(L_w, \Q_p(r))
  \simeq \bigoplus_{w \in S_p(L)} L_w \simeq L \otimes_{\Q} \Q_p.
 \]
\end{proof}

By abuse of notation we write $\exp_r^{BK}$ for the isomomrphism 
$L \otimes_{\Q} \Q_p \simeq P^1(\mathcal O_{L,S}, \Q_p(r))$.

\subsection{Schneider's conjecture}
We recall the following conjecture of Schneider \cite[p.~192]{MR544704}.

\begin{conj}[Schneider] \label{conj:Schneider}
	Let $r \not=0$ be an integer. Then the cohomology group $H^2_{\et}(\mathcal O_{L,S}, \Q_p / \Z_p (1-r))$ vanishes.
\end{conj}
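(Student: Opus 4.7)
The plan is to reinterpret $H^2_{\et}(\mathcal O_{L,S}, \Q_p/\Z_p(1-r))$ as a coinvariants module in a cyclotomic Iwasawa tower and then to exploit the Main Conjecture of Iwasawa theory in the cases where it is available. To set things up, let $L_\infty := L(\mu_{p^\infty})$, put $\mathcal G := \Gal(L_\infty/L)$ (an open subgroup of $\Z_p^{\times}$), and $\Lambda := \Z_p[[\mathcal G]]$. Using the Hochschild--Serre spectral sequence for the Galois extension $L_\infty/L$, together with the facts that $G_{L_\infty,S}$ has $p$-cohomological dimension $2$ and $\mathcal G$ has $p$-cohomological dimension $1$, one identifies $H^2_{\et}(\mathcal O_{L,S}, \Q_p/\Z_p(1-r))$ with the $\mathcal G$-coinvariants of $H^2_{\et}(\mathcal O_{L_\infty,S}, \Q_p/\Z_p(1-r))$, up to a contribution from $H^1$ that vanishes because $\Q_p/\Z_p(1-r)$ is divisible.

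Next I would pass to the compact side by Pontryagin duality and global (Artin--Verdier / Poitou--Tate) duality in the tower. This identifies the Pontryagin dual of $H^2_{\et}(\mathcal O_{L_\infty,S}, \Q_p/\Z_p(1-r))$ with a finitely generated $\Lambda$-module $Y$ closely related to $\varprojlim_n H^2_{\et}(\mathcal O_{L_n,S}, \Z_p(r))$, where $L_n := L(\mu_{p^n})$. Schneider's conjecture then translates to the statement that a certain specialization of $Y$ at the cyclotomic character to the power $-r$ is finite, which in turn is implied by the fact that the characteristic ideal of $Y$ does not meet the corresponding height-one prime of $\Lambda$. In the totally real case with $r$ even, the Main Conjecture (Wiles) identifies this characteristic ideal, up to Euler factors at places in $S$, with the ideal generated by the Deligne--Ribet $p$-adic $L$-function; evaluating at the relevant point and using the interpolation formula gives a nonzero multiple of $\zeta_L^{\ast}(1-r)$, which is nonzero by Siegel--Klingen. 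This would yield the conjecture in this case.

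The main obstacle, and the reason why Schneider's conjecture must simply be \emph{assumed} in the rest of the paper, is that outside the totally real, $r$-even case the Main Conjecture in the required form is not available: if $L$ is not totally real then the characteristic ideal of $Y$ is not described by a single commutative $p$-adic $L$-function, and although the non-commutative Main Conjecture of Ritter--Weiss and Kakde covers non-abelian totally real $L$, the CM or mixed-signature setting lacks even a satisfactory analytic input. Consequently this strategy proves the conjecture precisely in the cases already flagged in the introduction, and a general proof would demand either a substantially stronger Main Conjecture or a genuinely new idea.
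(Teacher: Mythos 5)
The statement you were asked to address is a \emph{conjecture}, not a theorem: the paper simply records it (attributing it to Schneider) and then works conditionally under the hypothesis that it holds. What the paper does prove is Theorem~\ref{thm:Schneider}, which establishes two special cases: (i) $r<0$, which is cited as a result of Soul\'e, and (ii) $r>0$ even with $L$ totally real. Your proposal correctly diagnoses that a general proof is out of reach and that one should aim only for these cases, but it does not mention case (i) at all, and for case (ii) it takes a much heavier route than the paper.

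For the totally real, even-$r$ case the paper's argument is short and does not use Iwasawa theory or any Main Conjecture: by Borel's rank computation $d_r=0$ in this situation, so $K_{2r-1}(\mathcal O_{L,S})$ is finite; by Quillen--Lichtenbaum (Theorem~\ref{thm:Quillen--Lichtenbaum}) the group $H^1_{\et}(\mathcal O_{L,S},\Z_p(r))$ is therefore finite; hence $\Sha^1(\mathcal O_{L,S},\Z_p(r))$ is finite, but by Proposition~\ref{prop:Schneider-equivalence}(i) this group is torsion-free, so it vanishes, which by Proposition~\ref{prop:Schneider-equivalence}(ii) is exactly Schneider's conjecture. Your route through the Deligne--Ribet $p$-adic $L$-function and Wiles' Main Conjecture would, at best, recover the same case with substantially more input, and it leaves a genuine gap: the interpolation property relates the $p$-adic $L$-function at the relevant point to an Euler-modified $L$-value at a \emph{negative} integer, not to $\zeta_L^{\ast}(1-r)$, and controlling whether the resulting specialization is a unit (versus merely nonzero) requires additional arguments about trivial zeros and Euler factors at $p$ that your sketch does not address. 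Your Iwasawa-theoretic framing is closer in spirit to Schneider's original paper \cite{MR544704}, which the present paper cites in Remark~\ref{rmk:Schneider-almost} to get the conjecture for almost all $r$; but it is not the route taken here. Finally, your claim that the $H^1(\mathcal G,-)$ term in Hochschild--Serre dies ``because $\Q_p/\Z_p(1-r)$ is divisible'' is not justified as stated: $\mathcal G$ has $p$-cohomological dimension $1$, so $H^1(\mathcal G,-)$ of a cofinitely generated module need not vanish, and this is precisely where the nontrivial Iwasawa-theoretic content resides.
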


\begin{remark}
	It can be shown that Schneider's conjecture for $r=1$ is equivalent to Leopoldt's conjecture
	(see \cite[Chapter X, \S 3]{MR2392026}).
\end{remark}

\begin{remark} \label{rmk:Schneider-almost}
	For a given number field $L$ and a fixed prime $p$, 
	Schneider's conjecture holds for almost all $r$.
	This follows from \cite[\S 5, Corollar 4]{MR544704} and \cite[\S 6, Satz 3]{MR544704}.
\end{remark}

\begin{definition}
 Let $M$ be a topological $G_{L,S}$-module.
 For any integer $i$
 we denote the kernel of the natural localization map
 \[
  H^i_{\et} (\mathcal O_{L,S}, M) \rightarrow P^i(\mathcal O_{L,S}, M)
 \]
 by $\Sha^i(\mathcal O_{L,S}, M)$.
 We call $\Sha^i(\mathcal O_{L,S}, M)$
  the \emph{Tate--Shafarevich group} of $M$ in degree $i$.
\end{definition}

The relation of Tate--Shafarevich groups to Schneider's conjecture is explained by the following result (see also \cite[Lemma 3.2.10]{barrett}).

\begin{prop} \label{prop:Schneider-equivalence}
 Let $r \not= 0$ be an integer and let $p$ be an odd prime. 
 Then the following holds.
 \begin{enumerate}
  \item
  The Tate--Shafarevich group $\Sha^1(\mathcal O_{L,S}, \Z_p(r))$ is torsion-free.
  \item
  Schneider's conjecture holds at $r$ and $p$
  if and only if the Tate--Shafarevich group $\Sha^1(\mathcal O_{L,S}, \Z_p(r))$ vanishes.
 \end{enumerate}
\end{prop}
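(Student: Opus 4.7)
For part (i), I would exploit the Kummer-type coefficient sequence $0 \to \Z_p(r) \to \Q_p(r) \to \Q_p/\Z_p(r) \to 0$, globally on $\Spec(\mathcal{O}_{L,S})$ and at each $w \in S(L)$. Since $r \neq 0$ the cyclotomic character to the $r$-th power is non-trivial on $G_L$, so $H^0_{\et}(\mathcal{O}_{L,S}, \Q_p(r)) = 0$; as $H^1_{\et}(\mathcal{O}_{L,S}, \Q_p(r))$ is a $\Q_p$-vector space, the long exact sequence identifies the torsion subgroup of $H^1_{\et}(\mathcal{O}_{L,S}, \Z_p(r))$ with $H^0_{\et}(\mathcal{O}_{L,S}, \Q_p/\Z_p(r))$, which equals $(\Q_p/\Z_p(r))^{G_L}$ because $L(\mu_{p^\infty})/L$ is unramified outside $S$. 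The same argument applied locally at a finite place $w$ identifies the torsion of $H^1_{\et}(L_w, \Z_p(r))$ with $(\Q_p/\Z_p(r))^{G_{L_w}}$. Choosing any $w \in S_p(L)$, the inclusion $G_{L_w} \subseteq G_L$ gives an inclusion $(\Q_p/\Z_p(r))^{G_L} \hookrightarrow (\Q_p/\Z_p(r))^{G_{L_w}}$, and composing with the embedding into $H^1_{\et}(L_w, \Z_p(r))$ shows that the torsion of $H^1_{\et}(\mathcal{O}_{L,S}, \Z_p(r))$ injects into $P^1(\mathcal{O}_{L,S}, \Z_p(r))$, proving (i).

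For part (ii), I would apply global Artin--Verdier (equivalently Poitou--Tate) duality to the finite modules $\Z/p^n(r)$; the hypothesis that their orders are invertible on $\Spec(\mathcal{O}_{L,S})$ is satisfied because $S \supseteq S_p$. Together with the identification $\Hom(\Z/p^n(r), \mu_{p^n}) = \Z/p^n(1-r)$, this yields perfect pairings of finite groups
\[
\Sha^1(\mathcal{O}_{L,S}, \Z/p^n(r)) \times \Sha^2(\mathcal{O}_{L,S}, \Z/p^n(1-r)) \to \Q_p/\Z_p
\]
for each $n \geq 1$. Passing to the compact inverse limit on the $\Z_p(r)$-side (well behaved because each finite-level term is finite, so Mittag--Leffler is automatic and $\Sha^1$ commutes with $\varprojlim_n$) and to the discrete direct limit on the $\Q_p/\Z_p(1-r)$-side produces a natural isomorphism
\[
\Sha^1(\mathcal{O}_{L,S}, \Z_p(r)) \cong \Sha^2(\mathcal{O}_{L,S}, \Q_p/\Z_p(1-r))^{\vee}.
\]
To replace $\Sha^2$ by the full cohomology group I would then verify that $P^2(\mathcal{O}_{L,S}, \Q_p/\Z_p(1-r)) = 0$: for finite $w$, local Tate duality identifies the local factor with $(\Z_p(r)^{G_{L_w}})^{\vee}$, which vanishes for $r \neq 0$ because any local field contains only finitely many $p$-power roots of unity; for archimedean $w$, the group vanishes because $p$ is odd. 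Combining these, $\Sha^1(\mathcal{O}_{L,S}, \Z_p(r))$ vanishes if and only if $H^2_{\et}(\mathcal{O}_{L,S}, \Q_p/\Z_p(1-r))$ does, which is precisely (ii).

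The main technical point is the limit step in Artin--Verdier duality: one must verify that $\Sha^1(\mathcal{O}_{L,S}, \Z_p(r))$ coincides with $\varprojlim_n \Sha^1(\mathcal{O}_{L,S}, \Z/p^n(r))$, and dually that $\Sha^2(\mathcal{O}_{L,S}, \Q_p/\Z_p(1-r))$ coincides with $\varinjlim_n \Sha^2(\mathcal{O}_{L,S}, \Z/p^n(1-r))$. In both cases this reduces to comparing cohomology with the (inverse or direct) limit of cohomology, and the relevant $\varprojlim^1$-obstruction vanishes because the finite-level Shas are finite. The remaining ingredients -- the long exact coefficient sequence, local Tate duality, and the elementary inclusion of Galois invariants -- are routine.
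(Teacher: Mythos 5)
Your proof is correct. For part (ii) you follow essentially the same route as the paper: Poitou--Tate duality gives $\Sha^1(\mathcal O_{L,S}, \Z_p(r)) \cong \Sha^2(\mathcal O_{L,S}, \Q_p/\Z_p(1-r))^{\vee}$, and then showing $P^2(\mathcal O_{L,S}, \Q_p/\Z_p(1-r)) = 0$ (via local Tate duality at finite places and oddness of $p$ at archimedean ones) upgrades $\Sha^2$ to the full $H^2$. The only difference there is that you spell out the finite-level duality and the passage to limits explicitly, while the paper cites the relevant theorem of Neukirch--Schmidt--Wingberg directly; this is a matter of presentation, not content.

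For part (i), however, you take a genuinely different route. The paper first establishes the isomorphism $\Sha^1(\mathcal O_{L,S}, \Z_p(r)) \cong H^2_{\et}(\mathcal O_{L,S}, \Q_p/\Z_p(1-r))^{\vee}$ and then invokes a lemma of Schneider asserting that $H^2_{\et}(\mathcal O_{L,S}, \Q_p/\Z_p(1-r))$ is divisible; torsion-freeness of $\Sha^1$ falls out immediately as the Pontryagin dual of a divisible group. You instead argue directly via the Kummer coefficient sequence $0 \to \Z_p(r) \to \Q_p(r) \to \Q_p/\Z_p(r) \to 0$, identifying $H^1(\mathcal O_{L,S}, \Z_p(r))_{\tor}$ with $(\Q_p/\Z_p(r))^{G_L}$ and the local torsion at a finite $w$ with $(\Q_p/\Z_p(r))^{G_{L_w}}$, then using naturality of the connecting maps together with the inclusion of Galois invariants to see that the global torsion injects into $P^1$. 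This is a more self-contained and elementary argument that avoids the appeal to Schneider's divisibility lemma and does not require the duality isomorphism at all, at the mild cost of tracking compatibility of the coefficient sequence with localization. Both are valid; your version of (i) has the advantage of logical independence from (ii).
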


\begin{proof}
 We first claim that for any place $w$ of $L$ the group $H^2_{\et}(L_w, \Q_p / \Z_p (1-r))$ vanishes.
 This is clear when $w$ is archimedean. If $w$ is a finite place, then the Pontryagin dual of
 $H^2_{\et}(L_w, \Q_p / \Z_p (1-r))$ naturally identifes with $H^0_{\et}(L_w, \Z_p(r)) = 0$
 by local Tate duality. Now by Poitou--Tate duality \cite[Theorem 8.6.9]{MR2392026} and the claim
 we have
 \[
  \Sha^1(\mathcal O_{L,S}, \Z_p(r)) \simeq \Sha^2(\mathcal O_{L,S}, \Q_p / \Z_p(1-r))^{\vee}
  = H^2_{\et}(\mathcal O_{L,S}, \Q_p / \Z_p (1-r))^{\vee}.
 \]
 This implies (ii) and also (i) as the groups $H^2_{\et}(\mathcal O_{L,S}, \Q_p / \Z_p (1-r))$
 are divisible \cite[Lemma 2]{MR544704}.
\end{proof}

We record some cases, where Schneider's conjecture is known.

\begin{theorem} \label{thm:Schneider}
 Let $p$ be an odd prime.
 \begin{enumerate}
  \item
  If $r<0$ is an integer, then Schneider's conjecture holds at $r$ and $p$.
  \item
  If $r>0$ is even and $L$ is a totally real field, then Schneider's conjecture holds at $r$ and $p$.
 \end{enumerate}
\end{theorem}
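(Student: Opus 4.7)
By Proposition~\ref{prop:Schneider-equivalence}, Schneider's conjecture at $r$ and $p$ is equivalent to the vanishing of $H^2_{\et}(\mathcal O_{L,S}, \Q_p/\Z_p(1-r))$, since the local $H^2$'s at twist $1-r$ already vanish (as recorded in the proof of that proposition). I shall establish this vanishing separately in the two cases.

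For part (i), write $n := 1-r \geq 2$. By the Corollary to Theorem~\ref{thm:Quillen--Lichtenbaum} we have
\[
	H^2_{\et}(\mathcal O_{L,S}, \Z_p(n)) \simeq K_{2n-2}(\mathcal O_{L,S}) \otimes \Z_p,
\]
and the right-hand side is finite: this follows from Borel's finiteness theorem for $K_{2n-2}(\mathcal O_L)$ combined with the exact sequence~\eqref{eqn:even-ses}, whose rightmost term is finite by~\eqref{eqn:K-theory_finitefield}. Hence $H^2_{\et}(\mathcal O_{L,S}, \Z_p(n)) \otimes_{\Z_p} \Q_p = 0$. The long exact sequence attached to $0 \to \Z_p(n) \to \Q_p(n) \to \Q_p/\Z_p(n) \to 0$ then yields an injection
\[
	H^2_{\et}(\mathcal O_{L,S}, \Q_p/\Z_p(n)) \hookrightarrow H^3_{\et}(\mathcal O_{L,S}, \Z_p(n)),
\]
and the target is zero because $G_{L,S}$ has $p$-cohomological dimension $2$ (as already used in the proof of the Corollary above). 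This proves (i).

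For part (ii), set $n := 1-r$; since $r > 0$ is even, $n$ is a negative odd integer. The plan is to reduce the required vanishing to the Iwasawa Main Conjecture for totally real fields proved by Wiles. Let $L_\infty/L$ be the cyclotomic $\Z_p$-extension with $\Gamma := \Gal(L_\infty/L) \cong \Z_p$, and put $\Lambda := \Z_p[[\Gamma]]$. A Hochschild--Serre / Iwasawa-descent argument identifies the Pontryagin dual of $H^2_{\et}(\mathcal O_{L,S}, \Q_p/\Z_p(n))$, up to a controlled error involving $H^1(\Gamma,-)$, with the $\Gamma$-coinvariants of a certain torsion $\Lambda$-module $\mathfrak X$ twisted by the appropriate power of the cyclotomic character --- essentially the inverse limit along the tower of the $p$-part of suitably $S$-modified ideal class groups. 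Wiles' Main Conjecture identifies the characteristic ideal of $\mathfrak X$ with the Deligne--Ribet $p$-adic $L$-function of $L$. Its value at the relevant twist agrees, up to a $p$-adic unit, with $\zeta_L(n)$, which is a nonzero rational number by the Klingen--Siegel theorem (here it is crucial that $n$ is a negative \emph{odd} integer and $L$ is totally real). This non-vanishing forces the $\Gamma$-coinvariants in question to vanish after tensoring with $\Q_p$, and then the Pontryagin dual vanishes entirely.

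The only serious obstacle is part (ii), which genuinely rests on both the full strength of the Main Conjecture and a careful Iwasawa-theoretic control argument; part (i) is essentially formal from cohomological dimension together with Borel's finiteness theorem for even $K$-groups.
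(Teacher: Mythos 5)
Your treatment of part (i) is essentially the standard Soul\'e argument, which is exactly what the paper's citations (Soul\'e and NSW, Theorem 10.3.27) deliver: finiteness of $K_{2n-2}(\mathcal O_{L,S})$ for $n\geq 2$, the identification with $H^2_{\et}(\mathcal O_{L,S},\Z_p(n))$, and cohomological dimension $2$ together force $H^2_{\et}(\mathcal O_{L,S},\Q_p/\Z_p(n))=0$. (A minor remark: Soul\'e's original argument only needs \emph{surjectivity} of the Chern class map, which he proved, whereas you invoke the full Quillen--Lichtenbaum isomorphism; this is harmless but historically heavier than needed.)

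For part (ii), however, you take a genuinely different and much heavier route than the paper, and as sketched it has real gaps. You reach for Wiles' Main Conjecture and an Iwasawa-descent/control argument, which would require substantial work to make rigorous: identifying the correct Iwasawa module $\mathfrak X$, handling the error terms from $H^1(\Gamma,-)$, comparing the $p$-adic $L$-value with $\zeta_L(n)$ after stripping Euler factors, and then passing from ``finite coinvariants'' to ``zero'' (this last step silently uses divisibility of $H^2_{\et}(\mathcal O_{L,S},\Q_p/\Z_p(1-r))$, which you never mention). None of this is needed. The paper's proof of (ii) is short and elementary: by the rank formula \eqref{eqn:rank-Betti}, for $L$ totally real and $r$ even one has $d_r=r_2=0$, so Borel's theorem says $K_{2r-1}(\mathcal O_L)$ (hence $K_{2r-1}(\mathcal O_{L,S})$ by \eqref{eqn:odd-iso}) is finite; Quillen--Lichtenbaum then makes $H^1_{\et}(\mathcal O_{L,S},\Z_p(r))$ finite; its subgroup $\Sha^1(\mathcal O_{L,S},\Z_p(r))$ is therefore finite, but it is torsion-free by Proposition~\ref{prop:Schneider-equivalence}(i), hence zero; Proposition~\ref{prop:Schneider-equivalence}(ii) then gives Schneider's conjecture. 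The insight you miss is precisely the reformulation in terms of $\Sha^1(\mathcal O_{L,S},\Z_p(r))$ together with its torsion-freeness: the hypotheses ``$L$ totally real, $r$ even'' already kill the Borel rank $d_r$, so no arithmetic input of Main Conjecture strength is required.
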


\begin{proof}
 Case (i) is due to Soul\'e \cite{MR553999} (see also \cite[Theorem 10.3.27]{MR2392026}).
 Now suppose that $r>0$ is even and that $L$ is totally real. Then the $K$-groups
 $K_{2r-1}(\mathcal O_{L,S})$ are finite by work of Borel (see \S \ref{sec:Borel}).
 The Quillen--Lichtenbaum Conjecture (Theorem \ref{thm:Quillen--Lichtenbaum}) implies
 that the groups $H^1_{\et}(\mathcal O_{L,S}, \Z_p(r))$ are finite as well. 
 It follows that
 the Tate--Shafarevich group $\Sha^1(\mathcal O_{L,S}, \Z_p(r))$ is finite and thus
 vanishes by Proposition \ref{prop:Schneider-equivalence} (i).
 Now (ii) follows from Proposition \ref{prop:Schneider-equivalence} (ii).
\end{proof}

\subsection{Compact support cohomology}
Let $M$ be a topological $G_{L,S}$-module. 
Following Burns and Flach \cite{MR1884523} we define the compact support cohomology complex to be
\[
 R\Gamma_c(\mathcal O_{L,S}, M) := \cone \left(R\Gamma(\mathcal O_{L,S}, M) \rightarrow
  \bigoplus_{w \in S(L)} R\Gamma(L_w, M) \right)[-1],
\]
where the arrow is induced by the natural restriction maps. For any $i \in \Z$ we abbreviate
$H^iR\Gamma_c(\mathcal O_{L,S}, M)$ to $H^i_c(\mathcal O_{L,S}, M)$.
If $r$ is an integer, we set 
$H^i_c(\mathcal O_{L,S}, \Q_p(r)) := H^i_c(\mathcal O_{L,S}, \Z_p(r)) \otimes_{\Z_p} \Q_p$.

\begin{lemma} \label{lem:vanishing}
 For every topological $G_{L,S}$-module $M$ we have 
 $$H^0_c(\mathcal O_{L,S}, M) = \Sha^0(\mathcal{O}_{L,S},M) = 0.$$
\end{lemma}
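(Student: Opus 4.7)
The statement should follow directly from unwinding the definition of the compact support complex as a shifted cone, together with the trivial observation that restriction of $G_{L,S}$-invariants to $G_{L_w}$-invariants is just an inclusion of subgroups of $M$. I would proceed as follows.

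First, shift the defining triangle by $1$ to obtain the distinguished triangle
\[
 R\Gamma_c(\mathcal O_{L,S}, M) \rightarrow R\Gamma(\mathcal O_{L,S}, M) \rightarrow \bigoplus_{w \in S(L)} R\Gamma(L_w, M) \rightarrow R\Gamma_c(\mathcal O_{L,S}, M)[1]
\]
in $\mathcal D(\Z)$, where the middle arrow is induced by the natural restriction maps. All three complexes are computed by continuous cochains and therefore live in nonnegative cohomological degrees, so the associated long exact sequence begins
\[
 0 \rightarrow H^0_c(\mathcal O_{L,S}, M) \rightarrow H^0(\mathcal O_{L,S}, M) \xrightarrow{\,\mathrm{loc}\,} \bigoplus_{w \in S(L)} H^0(L_w, M).
\]
By the very definition of the Tate--Shafarevich group, $\Sha^0(\mathcal O_{L,S}, M) = \ker(\mathrm{loc})$, and the exact sequence above identifies this kernel with $H^0_c(\mathcal O_{L,S}, M)$. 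This already gives the first claimed equality.

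For the vanishing of $\Sha^0(\mathcal O_{L,S}, M)$, I would identify $H^0(\mathcal O_{L,S}, M) = M^{G_{L,S}}$ and $H^0(L_w, M) = M^{G_{L_w}}$, both viewed as subgroups of $M$. The chosen embedding $\overline{L} \hookrightarrow \overline{L_w}$ induces a continuous homomorphism $G_{L_w} \rightarrow G_L \twoheadrightarrow G_{L,S}$, so any $m \in M$ that is fixed by all of $G_{L,S}$ is a fortiori fixed by the image of $G_{L_w}$. Hence the restriction map is the tautological inclusion $M^{G_{L,S}} \hookrightarrow M^{G_{L_w}}$, which is injective; composing with the inclusion of the $w$-component into the direct sum, we see that $\mathrm{loc}$ is injective, so $\Sha^0(\mathcal O_{L,S}, M) = 0$.

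There is no real obstacle here: the argument is formal and requires no hypothesis on $M$. The only point to be slightly careful about is the identification of restriction with an inclusion of fixed-point submodules of $M$, which is immediate once one unravels the definitions.
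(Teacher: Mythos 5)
Your argument is correct and is essentially the paper's own: both $H^0_c(\mathcal O_{L,S},M)$ and $\Sha^0(\mathcal O_{L,S},M)$ are identified with the kernel of the degree-zero localization map, which is the tautological inclusion $M^{G_{L,S}} \hookrightarrow \bigoplus_{w \in S(L)} M^{G_{L_w}}$ and hence injective. Your version merely spells out the long exact sequence of the defining triangle, which the paper leaves implicit.
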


\begin{proof}
 This is \cite[Lemma 3.1.6]{barrett}. We repeat the short argument 
 for the reader's convenience.

 By definition, the groups $H^0_c(\mathcal O_{L,S}, M)$ and
 $\Sha^0(\mathcal{O}_{L,S},M)$ both identify with the kernel of the map
 \[
  H^0_{\et}(\mathcal O_{L,S}, M) \rightarrow P^0(\mathcal O_{L,S}, M)
 \]
 which is just the diagonal embedding 
 $M^{G_{L,S}} \hookrightarrow \bigoplus_{w \in S(L)} M^{G_{L_w}}$.
\end{proof}

\begin{prop} \label{prop:perfect-r}
 Let $r$ be an integer. Then the complex $R\Gamma_c(\mathcal O_{L,S}, \Z_p(r))$
 belongs to $\mathcal D^{\perf} (\Z_p[G])$.
\end{prop}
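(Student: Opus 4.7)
The plan is to write $R\Gamma_c(\mathcal O_{L,S},\Z_p(r))$ as the shift of a cone of a morphism between two perfect complexes of $\Z_p[G]$-modules; since $\mathcal D^{\perf}(\Z_p[G])$ is a triangulated subcategory of $\mathcal D(\Z_p[G])$, the conclusion follows from the very definition
\[
 R\Gamma_c(\mathcal O_{L,S}, \Z_p(r)) = \cone\bigl(R\Gamma(\mathcal O_{L,S}, \Z_p(r)) \to \bigoplus_{w \in S(L)} R\Gamma(L_w, \Z_p(r)) \bigr)[-1].
\]
So the task reduces to showing that each of the two terms is perfect over $\Z_p[G]$.

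For the global term, by \cite[Proposition 8.3.18]{MR2392026} the group $G_{L,S}$ has cohomological $p$-dimension equal to $2$, and $\Z_p(r)$ is finitely generated and $\Z_p$-free. A standard construction (cf. \cite{MR1884523}) then produces a quasi-isomorphism to a bounded complex of finitely generated projective $\Z_p[G]$-modules: one chooses a resolution of the trivial $\Z_p[[G_{L,S}]]$-module $\Z_p$ by finitely generated projective $\Z_p[[G_{L,S}]]$-modules of length at most $2$, applies $\Hom_{G_{L,S}}(-,\Z_p(r))$, and transports the $G$-action through the extension $1 \to G_{L,S} \to G_{K,S} \to G \to 1$.

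For the local term, I would use the natural decomposition
\[
 \bigoplus_{w\in S(L)} R\Gamma(L_w,\Z_p(r)) \simeq \bigoplus_{v\in S} \Ind_{G_w}^G R\Gamma(L_w,\Z_p(r)),
\]
where for each $v\in S$ a place $w\mid v$ is fixed as in \S\ref{sec:setup}. Since $\Z_p[G]$ is free over $\Z_p[G_w]$, induction preserves perfect complexes, so it suffices to show that each $R\Gamma(L_w,\Z_p(r))$ is perfect over $\Z_p[G_w]$. For $w$ finite, $G_{L_w}$ has cohomological $p$-dimension $2$ by \cite[Theorem 7.1.8]{MR2392026}, and the same resolution argument as in the global case applies. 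For $w$ archimedean, $|G_w|$ divides $2$ and is therefore coprime to the odd prime $p$; so $\Z_p[G_w]$ is a maximal $\Z_p$-order and every finitely generated $\Z_p$-free $\Z_p[G_w]$-module is automatically projective. Combining this with Lemma~\ref{lem:local-cohomology}, which shows that $R\Gamma(L_w,\Z_p(r))$ has cohomology concentrated in degree $0$ and equal to a finitely generated $\Z_p$-free module, yields perfectness in the archimedean case as well.

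The main obstacle is the passage from finite generation of the cohomology groups to the existence of a globally bounded projective representative at the chain level; this is precisely where the bound on the cohomological $p$-dimension, together with the topological finite generation of the relevant profinite Galois groups, enters. Once the global and local pieces are known to be perfect, the remaining operations (taking cones and inducing from $\Z_p[G_w]$ to $\Z_p[G]$) are entirely formal within $\mathcal D^{\perf}(\Z_p[G])$.
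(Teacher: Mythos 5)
Your overall reduction is sound and is, in outline, how this result is established in the literature (the paper itself disposes of the proposition by citing \cite[Proposition 1.20]{MR1386106}): a shifted cone of a morphism of perfect complexes is perfect, the local term is a direct sum over $v\in S$ of complexes induced from $\Z_p[G_w]$ and induction along the free extension $\Z_p[G_w]\subseteq\Z_p[G]$ preserves perfectness, and at archimedean places the oddness of $p$ concentrates everything in degree $0$ where the module is $\Z_p$-free and $\Z_p[G_w]$-projective. (Only a cosmetic point here: Lemma \ref{lem:local-cohomology} is stated for $r>1$, whereas the proposition concerns an arbitrary integer $r$; the archimedean facts you need hold for all $r$ directly, so you should argue them without that lemma.)

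The genuine problem is the key step, namely the construction of a bounded representative by finitely generated projective $\Z_p[G]$-modules for $R\Gamma(\mathcal O_{L,S},\Z_p(r))$ (and likewise over $\Z_p[G_w]$ at the finite places). As you describe it — resolve the trivial module $\Z_p$ by finitely generated projective $\Z_p[[G_{L,S}]]$-modules $P_\bullet$, apply $\Hom_{G_{L,S}}(-,\Z_p(r))$, and ``transport'' the $G$-action through $1\to G_{L,S}\to G_{K,S}\to G\to 1$ — the step fails. First, the extension only gives an \emph{outer} action of $G$ on $G_{L,S}$, and an outer action does not act on a chosen resolution; there is no canonical way to endow $\Hom_{G_{L,S}}(P_\bullet,\Z_p(r))$ with a $G$-structure. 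Second, and decisively, even if one could, the terms are far too small to be $\Z_p[G]$-projective: for $P=\Z_p[[G_{L,S}]]$ one gets $\Hom_{G_{L,S}}(P,\Z_p(r))\simeq\Z_p(r)$, of $\Z_p$-rank one, which is not projective over $\Z_p[G]$ whenever $p$ divides $|G|$. Since perfectness over $\Z_p$ is easy, the entire content of the proposition is the equivariant statement, so this cannot be waved through. The standard repair is to resolve over the \emph{big} group: using $\mathrm{cd}_p(G_{K,S})\leq 2$ (for odd $p$ and $S\supseteq S_p\cup S_\infty$, \cite[Proposition 8.3.18]{MR2392026}) and the finiteness of $H^i(G_{K,S},\F_p)$, choose a length-two resolution of $\Z_p$ by finitely generated projective $\Z_p[[G_{K,S}]]$-modules $Q_\bullet$. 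Then $\Hom_{G_{L,S}}(Q_\bullet,\Z_p(r))$ carries the natural $G$-action $(\bar\sigma f)(x)=\sigma f(\sigma^{-1}x)$, its terms are direct summands of modules of the form $(\Z_p[G]\otimes_{\Z_p}\Z_p(r))^n$, which are free $\Z_p[G]$-modules, and it computes $R\Gamma(\mathcal O_{L,S},\Z_p(r))$ in $\mathcal D(\Z_p[G])$ (this is Shapiro's lemma in the form $R\Gamma(\mathcal O_{L,S},\Z_p(r))\simeq R\Gamma(G_{K,S},\Z_p[G]\otimes_{\Z_p}\Z_p(r))$). The same correction, with $G_{K_v}$ in place of $G_{K,S}$, is needed at the finite places. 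With that modification your argument goes through; alternatively one can simply quote \cite[Proposition 1.20]{MR1386106} as the paper does.
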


\begin{proof}
 This is a special case of \cite[Proposition 1.20]{MR1386106}, for example.
\end{proof}

\begin{prop}  \label{prop:cs-cohomology}
 Let $r>1$ be an integer and let $p$ be an odd prime. Then the following holds.
 \begin{enumerate}
  \item
  We have an exact sequence of $\Z_p[G]$-modules
  \[
   0 \rightarrow H_{-r}^+(L) \otimes \Z_p \rightarrow H^1_c(\mathcal O_{L,S}, \Z_p(r)) \rightarrow
   \Sha^1(\mathcal O_{L,S}, \Z_p(r)) \rightarrow 0.
  \]
  In particular, we have $H^1_c(\mathcal O_{L,S}, \Z_p(r)) \simeq H_{-r}^+(L) \otimes \Z_p$
  if and only if Schneider's conjecture \ref{conj:Schneider} holds.
  \item
  We have an isomorphism of $\Z_p[G]$-modules
  \[
   H^3_c(\mathcal O_{L,S}, \Z_p(r)) \simeq \Z_p(r-1)_{G_L}
  \]
  \item
  We have an exact sequence of $\Z_p[G]$-modules
  \[
   0 \rightarrow \Sha^2(\mathcal O_{L,S}, \Z_p(r)) \rightarrow H^2_{\et}(\mathcal O_{L,S}, \Z_p(r))
   \rightarrow \bigoplus_{w \in S(L)} \Z_p(r-1)_{G_{L_w}} \rightarrow \Z_p(r-1)_{G_L} \rightarrow 0.
  \]
  \item
  We have an isomorphism of $\Z_p[G]$-modules
  \[
   \Sha^2(\mathcal O_{L,S}, \Z_p(r)) \simeq K_{2r-2}^w(\mathcal O_{L,S})_p.
  \]
  In particular, $\Sha^2(\mathcal O_{L,S}, \Z_p(r))$ is finite
  and does not depend on $S$.
  \item
  Schneider's conjecture \ref{conj:Schneider} holds if and only if the $\Z_p$-rank of
  $H^2_c(\mathcal O_{L,S}, \Z_p(r))$ equals $d_{r+1}$.
 \end{enumerate}
\end{prop}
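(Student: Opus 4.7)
My plan is to read off the entire proposition from the long exact sequence in cohomology associated to the defining distinguished triangle
\[
R\Gamma_c(\mathcal{O}_{L,S}, \Z_p(r)) \to R\Gamma(\mathcal{O}_{L,S}, \Z_p(r)) \to \bigoplus_{w \in S(L)} R\Gamma(L_w, \Z_p(r)) \to R\Gamma_c(\mathcal{O}_{L,S}, \Z_p(r))[1].
\]
The global terms are supported in degrees $1$ and $2$ by the corollary to Theorem \ref{thm:Quillen--Lichtenbaum}, the local terms in degrees $0$, $1$, $2$ by Lemma \ref{lem:local-cohomology} and Theorem \ref{thm:local-Quillen--Lichtenbaum}, and $H^0_{\et}(\mathcal{O}_{L,S}, \Z_p(r)) = 0$ since the cyclotomic character has infinite image on $G_{L,S}$. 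Thus the long exact sequence collapses to
\[
0 \to P^0 \to H^1_c \to H^1_{\et} \to P^1 \to H^2_c \to H^2_{\et} \to P^2 \to H^3_c \to 0,
\]
where I abbreviate $H^i_{\et} := H^i_{\et}(\mathcal{O}_{L,S}, \Z_p(r))$ and $P^i := P^i(\mathcal{O}_{L,S}, \Z_p(r))$.

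I would then read off (i) as the initial three-term segment, identifying $P^0 = H^+_{-r}(L) \otimes \Z_p$ via Lemma \ref{lem:local-cohomology} and $\ker(H^1_{\et} \to P^1) = \Sha^1$ by definition; the Schneider equivalence then follows from Proposition \ref{prop:Schneider-equivalence}(ii). For (ii), I would invoke Poitou--Tate duality \cite[Theorem 8.6.9]{MR2392026} to get
\[
H^3_c(\mathcal{O}_{L,S}, \Z_p(r)) \simeq H^0_{\et}(\mathcal{O}_{L,S}, \Q_p/\Z_p(1-r))^{\vee} = \bigl((\Q_p/\Z_p(1-r))^{G_L}\bigr)^{\vee},
\]
and then use the elementary isomorphism $(M^G)^{\vee} \simeq (M^{\vee})_G$ together with the tautological identification $(\Q_p/\Z_p(1-r))^{\vee} \simeq \Z_p(r-1)$ to recover $\Z_p(r-1)_{G_L}$. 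The sequence in (iii) is then the terminal segment of the displayed long exact sequence, once each local term $H^2_{\et}(L_w, \Z_p(r))$ is rewritten as $\Z_p(r-1)_{G_{L_w}}$ by the analogous local Tate duality argument. For (iv) I would unwind the Chern class isomorphisms from Theorem \ref{thm:Quillen--Lichtenbaum} and Theorem \ref{thm:local-Quillen--Lichtenbaum}: the map $H^2_{\et} \to P^2$ then becomes $K_{2r-2}(\mathcal{O}_{L,S}) \otimes \Z_p \to \bigoplus_{w \in S_f(L)} K_{2r-2}(\mathcal{O}_w; \Z_p)$ (archimedean contributions vanish since $p$ is odd), so $\Sha^2$ coincides by definition with $K^w_{2r-2}(\mathcal{O}_{L,S})_p$; finiteness follows from Borel's theorem applied via \eqref{eqn:even-ses}, and $S$-independence from Remark \ref{rem:wild-independence}.

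For (v) I would first note that (ii) makes $H^3_c$ finite for $r > 1$, because the image of $\chi_{\mathrm{cyc}}^{r-1}$ on $G_L$ is an open, hence infinite, subgroup of $\Z_p^{\times}$. A rational Euler characteristic count using Corollary \ref{cor:local-rank}, Borel's equality $\rank_{\Z_p} H^1_{\et} = d_r$, and the identity $d_r + d_{r+1} = [L:\Q]$ extracted from \eqref{eqn:rank-Betti} shows that $R\Gamma_c(\mathcal{O}_{L,S}, \Q_p(r))$ has total Euler characteristic zero; combined with the rank formula $\rank_{\Z_p} H^1_c = d_{r+1} + \rank_{\Z_p} \Sha^1$ delivered by (i), this forces $\rank_{\Z_p} H^2_c = d_{r+1} + \rank_{\Z_p} \Sha^1$, whence the equivalence with Schneider's conjecture follows from Proposition \ref{prop:Schneider-equivalence}(i),(ii). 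The main obstacle will not be any single deep step but rather bookkeeping: Poitou--Tate and local Tate duality must be applied in a $\Z_p[G]$-equivariant form that produces coinvariants of $\Z_p(r-1)$ rather than invariants of $\Q_p/\Z_p(r-1)$, so that (iii) is exact as a sequence of $\Z_p[G]$-modules and the wild-kernel identification in (iv) respects the Galois action throughout.
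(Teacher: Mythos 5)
Your proposal is correct and follows essentially the same route as the paper: extract the long exact sequence from the defining triangle, identify $H^3_c$ via Artin--Verdier (Poitou--Tate) duality and $H^2_{\et}(L_w,\Z_p(r))$ via local Tate duality, match $\Sha^2$ with the wild kernel through the Chern class isomorphisms, and derive (v) from a rank count (the paper runs the five-term sequence explicitly where you phrase it as vanishing of the Euler characteristic, but these are the same calculation). One small point: in (i), to deduce from an abstract isomorphism $H^1_c \simeq H^+_{-r}(L)\otimes\Z_p$ that $\Sha^1$ vanishes you need Proposition~\ref{prop:Schneider-equivalence}(i) (torsion-freeness of $\Sha^1$) in addition to part (ii), since the exact sequence alone only shows $\Sha^1$ is torsion --- you correctly invoke both parts in your argument for (v), so just cite part (i) here as well.
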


\begin{proof}
  We first observe that Artin--Verdier duality implies
  \[
    H^3_c(\mathcal O_{L,S}, \Z_p(r)) \simeq H^0_{\et}(\mathcal O_{L,S}, \Q_p / \Z_p(1-r))^{\vee} 
    = (\Q_p / \Z_p (1-r)^{G_L})^{\vee} = \Z_p(r-1)_{G_L}
  \]
  giving (ii). For any $w \in S(L)$ local Tate duality likewise implies
  \[
    H^2_{\et}(L_w, \Z_p(r)) \simeq H^0_{\et}(L_w, \Q_p / \Z_p(1-r))^{\vee}
    = (\Q_p / \Z_p (1-r)^{G_{L_w}})^{\vee} = \Z_p(r-1)_{G_{L_w}}.
  \]
  As $H^0_c(\mathcal O_{L,S}, \Z_p(r))$ vanishes by Lemma \ref{lem:vanishing}, the long exact sequence in cohomology
  associated to the exact triangle
  \[
    R\Gamma_c(\mathcal O_{L,S}, \Z_p(r)) \rightarrow 
    R\Gamma(\mathcal O_{L,S}, \Z_p(r)) \rightarrow
  \bigoplus_{w \in S(L)} R\Gamma(L_w, \Z_p(r)) 
  \]
  now gives the exact sequences in (i) and (iii) by Lemma \ref{lem:local-cohomology} and the very definition 
  of Tate--Shafarevich groups (in view of (iv) the sequence in (iii) then
  actually coincides with the sequence in \cite[Satz 8]{MR544704}). 
  It is then also clear that Schneider's conjecture implies
  that we have an isomorphism 
  $H^1_c(\mathcal O_{L,S}, \Z_p(r)) \simeq H_{-r}^+(L) \otimes \Z_p$.
  Conversely, if these two $\Z_p[G]$-modules are isomorphic, they are
  in particular finitely generated $\Z_p$-modules of the same rank.
  The short exact sequence in (i) then implies that the Tate--Shafarevich
  group $\Sha^1(\mathcal O_{L,S}, \Z_p(r))$ is torsion and thus vanishes
  by Proposition \ref{prop:Schneider-equivalence} (i). Hence
  Schneider's conjecture holds by Proposition 
  \ref{prop:Schneider-equivalence} (ii). This completes the proof of (i). 
  Claim (iv) is an easy consequence of Theorem \ref{thm:Quillen--Lichtenbaum}
  and Remark \ref{rem:wild-independence}. Alternatively, it can be  
  derived from
  \cite[Corollary 4.2 and Theorem 5.10(7)]{MR3071809}.
  Finally, it follows from Theorem \ref{thm:Quillen--Lichtenbaum}, Corollary \ref{cor:local-rank} and the exact sequence
  \[ \begin{array}{rllllll}
    0 & \rightarrow & \Sha^1(\mathcal O_{L,S}, \Z_p(r)) & \rightarrow & H^1_{\et}(\mathcal O_{L,S}, \Z_p(r)) 
    & \rightarrow & P^1(\mathcal O_{L,S}, \Z_p(r))\\
    & \rightarrow & H^2_c(\mathcal O_{L,S}, \Z_p(r)) & \rightarrow & \Sha^2(\mathcal O_{L,S}, \Z_p(r)) & \rightarrow & 0
    \end{array}
  \]
   that the $\Z_p$-rank of
  $H^2_c(\mathcal O_{L,S}, \Z_p(r))$ equals 
  \[
  [L:\Q] - d_r + \rank_{\Z_p}(\Sha^1(\mathcal O_{L,S}, \Z_p(r))) = d_{r+1} + \rank_{\Z_p}(\Sha^1(\mathcal O_{L,S}, \Z_p(r))).
  \]
  Thus (v) is a consequence of Proposition \ref{prop:Schneider-equivalence}.
\end{proof}

\subsection{A conjecturally perfect complex} \label{subsec:conj-perfect}
We keep the notation of the last subsection and also recall the notation of \S \ref{sec:derived}. 
Let $C_{L, S}(r) \in \mathcal D(\Z_p[G])$ be the cone of the map
\[
	H^1_c(\mathcal O_{L,S}, \Z_p(r))\{1,4\} \rightarrow 
	R\Gamma_c(\mathcal O_{L,S}, \Z_p(r)) \oplus (H_{1-r}^+(L) \otimes \Z_p)\{2,3\}
\]
which on cohomology induces the identity map in degree $1$ and the zero map in all other degrees.

\begin{prop} \label{prop:conj-perfect}
 Let $r>1$ be an integer and let $p$ be an odd prime. Then the following holds.
\begin{enumerate}
	\item
	The complex $C_{L, S}(r)$ is acyclic outside degrees $2$ and $3$.
	\item
	There is an isomorphism of $\Z_p[G]$-modules
	\[
		H^2(C_{L, S}(r)) \simeq H^2_c(\mathcal O_{L,S}, \Z_p(r)) \oplus H_{1-r}^+(L) \otimes \Z_p.
	\]
	In particular, there is a surjection $H^2(C_{L, S}(r)) \rightarrow \Sha^2(\mathcal O_{L,S}, \Z_p(r))$.
	\item
	Assume that Schneider's conjecture \ref{conj:Schneider} holds. 
	Then the complex $C_{L, S}(r)$ belongs to
	$\mathcal D^{\perf}(\Z_p[G])$ and we have  an isomorphism of $\Z_p[G]$-modules
	\[
		H^3(C_{L, S}(r)) \simeq H^3_c(\mathcal O_{L,S}, \Z_p(r)) \oplus \left( H_{-r}^+(L) \oplus H_{1-r}^+(L)\right) \otimes \Z_p.
	\]
\end{enumerate}
\end{prop}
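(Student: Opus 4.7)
The strategy is to derive (i) and (ii) directly from the long exact sequence of the defining triangle, and to deduce the perfection and splitting assertions in (iii) by showing that, under Schneider's conjecture, both $A := H^1_c(\mathcal{O}_{L,S}, \Z_p(r))$ and $B := H^+_{1-r}(L) \otimes \Z_p$ are projective $\Z_p[G]$-modules. Setting also $X := R\Gamma_c(\mathcal{O}_{L,S}, \Z_p(r))$, the defining triangle reads $A\{1,4\} \xrightarrow{f} X \oplus B\{2,3\} \to C_{L,S}(r) \to A\{1,4\}[1]$. By Lemma \ref{lem:vanishing} and Proposition \ref{prop:perfect-r} (combined with the fact that $G_{L,S}$ has $p$-cohomological dimension $2$), $X$ has cohomology concentrated in degrees $1, 2, 3$. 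Since $H^1(f) = \id_A$ while $f$ induces the zero map on cohomology in all other degrees (either source or target vanishes), the long exact sequence will yield $H^i(C_{L,S}(r)) = 0$ for $i \neq 2, 3$, an isomorphism $H^2(C_{L,S}(r)) \cong H^2_c(\mathcal{O}_{L,S}, \Z_p(r)) \oplus B$, and a short exact sequence
\[
0 \to H^3_c(\mathcal{O}_{L,S}, \Z_p(r)) \oplus B \to H^3(C_{L,S}(r)) \to A \to 0.
\]
This proves (i) and the isomorphism in (ii); the surjection onto $\Sha^2(\mathcal{O}_{L,S}, \Z_p(r))$ arises by projecting onto $H^2_c(\mathcal{O}_{L,S}, \Z_p(r))$ and composing with $H^2_c \twoheadrightarrow \Sha^2$, the latter being the image of the natural map $H^2_c \to H^2_{\et}$ coming from the defining triangle of $R\Gamma_c$, identified with $\Sha^2$ via Proposition \ref{prop:cs-cohomology}(iii).

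The key auxiliary input for (iii) is the claim that for every $j \in \Z$ and every odd prime $p$, the $\Z_p[G]$-module $H^+_j(L) \otimes \Z_p$ is projective. To prove it I will use that, since $L/K$ is Galois, $G$ acts freely on $\Sigma(L) = \Hom(L, \C)$ with quotient canonically $\Sigma(K)$, so $H_j(L) \otimes \Z_p$ is free of rank $[K:\Q]$ over $\Z_p[G]$; because $p$ is odd, the idempotent $\tfrac{1}{2}(1+c) \in \Z_p[\Gal(\C/\R)]$ attached to complex conjugation commutes with $G$, and therefore $H^+_j(L) \otimes \Z_p$ appears as a $\Z_p[G]$-direct summand of a free module and is projective.

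Applying this with $j = 1-r$ gives projectivity of $B$ unconditionally; and under Schneider's conjecture Proposition \ref{prop:cs-cohomology}(i) identifies $A$ with $H^+_{-r}(L) \otimes \Z_p$, so $A$ is projective too. It follows that $A\{1,4\}$ and $B\{2,3\}$ are perfect complexes of $\Z_p[G]$-modules, and combined with $X \in \mathcal{D}^{\perf}(\Z_p[G])$ this forces $C_{L,S}(r) \in \mathcal{D}^{\perf}(\Z_p[G])$; the short exact sequence above then splits because $A$ is projective, yielding the stated isomorphism for $H^3(C_{L,S}(r))$. The only step that is not formal manipulation of cones and long exact sequences is the projectivity lemma, which is also the single point where the oddness of $p$ enters essentially.
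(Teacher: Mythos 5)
Your proposal is correct and follows essentially the same route as the paper: the paper's (very brief) proof likewise combines the long exact sequence of the defining triangle with Propositions \ref{prop:cs-cohomology} and \ref{prop:perfect-r} and the observation that $H_k^+(L)\otimes\Z_p$ is a projective $\Z_p[G]$-module for every $k$, which you prove by exactly the intended argument (free $G$-action on $\Sigma(L)$ plus the idempotent $\tfrac12(1+c)$ for odd $p$). Your write-up simply makes explicit the details the paper leaves to the reader, including the splitting of the degree-$3$ sequence via projectivity of $H^1_c(\mathcal O_{L,S},\Z_p(r))\simeq H^+_{-r}(L)\otimes\Z_p$ under Schneider's conjecture.
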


\begin{proof}
This follows easily from Propositions \ref{prop:cs-cohomology} and  \ref{prop:perfect-r} once we have observed that the $\Z_p[G]$-module
$H_k^+(L) \otimes \Z_p$ is projective for every $k \in \Z$.
Indeed, the $\Z[G \times \Gal(\C/\R)]$-module $H_k(L)$ is free
over $\Z[G]$ of rank $[K:\Q]$ and $H_k^+(L) \otimes \Z_p$ is a 
direct summand of
$H_k(L) \otimes \Z_p$ as $p$ is odd.
\end{proof}

\section{Relative algebraic $K$-theory}\label{sec:relative-K}

For further details and background on algebraic $K$-theory used in this section, we refer the reader to
\cite{MR892316} and \cite{MR0245634}.

\subsection{Algebraic $K$-theory}\label{subsec:K-theory}
Let $R$ be a noetherian integral domain of characteristic $0$ with field of fractions $E$.
Let $A$ be a finite-dimensional semisimple $E$-algebra and let $\Lambda$ be an $R$-order in $A$.
Recall that $\PMod(\Lambda)$ denotes the category of finitely generated projective left $\Lambda$-modules.
Then $K_{0}(\Lambda)$ naturally identifies with the Grothendieck group of $\PMod(\Lambda)$
(see \cite[\S 38]{MR892316})
and $K_{1}(\Lambda)$ with the Whitehead group (see \cite[\S 40]{MR892316}).
For any field extension $F$ of $E$ we set $A_{F} := F \otimes_{E} A$.
Let $K_{0}(\Lambda, F)$ denote the relative
algebraic $K$-group associated to the ring homomorphism $\Lambda \hookrightarrow A_{F}$.
We recall that $K_{0}(\Lambda, F)$ is an abelian group with generators $[X,g,Y]$ where
$X$ and $Y$ are finitely generated projective $\Lambda$-modules
and $g:F \otimes_{R} X \rightarrow F \otimes_{R} Y$ is an isomorphism of $A_{F}$-modules;
for a full description in terms of generators and relations, we refer the reader to \cite[p.\ 215]{MR0245634}.
Furthermore, there is a long exact sequence of relative $K$-theory
\begin{equation}\label{eqn:long-exact-seq}
K_{1}(\Lambda) \longrightarrow K_{1}(A_{F}) \stackrel{\partial_{\Lambda,F}}{\longrightarrow}
K_{0}(\Lambda,F) \longrightarrow K_{0}(\Lambda) \longrightarrow K_{0}(A_{F})
\end{equation}
(see  \cite[Chapter 15]{MR0245634}). We write $\zeta(A)$ for the center of (any ring) $A$. 
The reduced norm map
\[
\Nrd_{A}: A \longrightarrow \zeta(A)
\]
is defined componentwise (see \cite[\S 9]{MR1972204})
and extends to matrix rings over $A$ in the obvious way; hence this induces
a map $K_{1}(A) \rightarrow \zeta(A)^{\times}$ which we also denote by $\Nrd_A$.

Let $P$ be a finitely generated projective $A$-module and let $\gamma$
be an $A$-endomorphism of $P$. Choose a finitely generated projective $A$-module
$Q$ such that $P \oplus Q$ is free. Then the reduced norm of $\gamma \oplus \id_Q$
with respect to a chosen basis yields a well-defined element
$\Nrd_A(\gamma) \in \zeta(A)$. In particular, if $\gamma$ is invertible,
then $\gamma$ defines a class $[\gamma] \in K_1(A)$ and we have
$\Nrd_A(\gamma) = \Nrd_A([\gamma])$.

\subsection{Refined Euler characteristics} \label{subsec:Euler-char}
For any $C^{\bullet} \in \mathcal C^b (\PMod (\Lambda))$ we
define $\Lambda$-modules
\[
 C^{ev} := \bigoplus_{i \in \Z} C^{2i}, \quad C^{odd} := \bigoplus_{i \in \Z} C^{2i+1}.
\]
Similarly, we define $H^{ev}(C^{\bullet})$ and $H^{odd}(C^{\bullet})$ 
to be the direct sum over all even and odd degree
cohomology groups of $C^{\bullet}$, respectively.
A pair $(C^{\bullet},t)$
consisting of a complex $C^{\bullet} \in \mathcal D^{\perf}(\Lambda)$ and an
isomorphism $t: H^{odd}(C_F ^{\bullet}) \rightarrow H^{ev}(C_F^{\bullet})$ is called a
trivialized complex, where we write $C_F^{\bullet}$ for
$F \otimes^{\mathbb L}_R C^{\bullet}$.
We refer to $t$ as a trivialization of $C^{\bullet}$.
One defines the refined Euler characteristic 
$\chi_{\Lambda,F}(C^{\bullet}, t) \in K_0(\Lambda,F)$ of a trivialized complex as follows:
Choose a complex $P^{\bullet} \in \mathcal C^b(\PMod(\Lambda))$ which is
quasi-isomorphic to $C^{\bullet}$. Let $B^i(P_F ^{\bullet})$ and 
$Z^i(P_F^{\bullet})$ denote the $i$-th cobounderies and $i$-th cocycles of
$P_F ^{\bullet}$, respectively. 
For every $i \in \Z$ we have the obvious exact sequences
\[ 0 \rightarrow {B^i(P_F^{\bullet})} \rightarrow {Z^i(P_F^{\bullet})} \rightarrow
 {H^i(P_F^{\bullet})} \rightarrow 0, \quad
   0 \rightarrow {Z^i(P_F^{\bullet})} \rightarrow {P_F^i} \rightarrow 
   {B^{i+1}(P_F^{\bullet})} \rightarrow 0. 
\]
If we choose splittings of the above sequences, we get an
isomorphism of $A_F$-modules
\[   \phi_t: P_F^{odd}  \simeq  \bigoplus_{i \in \Z} B^i(P_F^{\bullet}) \oplus
      H^{odd}(P_F^{\bullet})
      \simeq  \bigoplus_{i \in \Z} B^i(P_F^{\bullet})  \oplus H^{ev}(P_F^{\bullet})
      \simeq  P_F^{ev},
\]
where the second map is induced by $t$. Then the refined
Euler characteristic is defined to be
\[\chi_{\Lambda, F} (C^{\bullet}, t) := [P^{odd}, \phi_t, P^{ev}] \in K_0(\Lambda, F)\]
which indeed is independent of all choices made in the
construction.
For further information concerning refined Euler characteristics
we refer the reader to \cite{MR2076565}.

\subsection{$K$-theory of group rings}
Let $p$ be a prime and let $G$ be a finite group. 
By a well-known theorem of Swan (see \cite[Theorem (32.1)]{MR632548}) the 
map $K_{0}(\Z_{p}[G]) \rightarrow K_{0}(\Q_{p}[G])$ induced by extension of scalars is injective. 
Thus from \eqref{eqn:long-exact-seq} we obtain an exact sequence
\begin{equation}\label{eqn:group-ring-K-exact-seq}
K_{1}(\Z_{p}[G]) \longrightarrow K_{1}(\Q_{p}[G]) \longrightarrow K_{0}(\Z_{p}[G],\Q_{p})
\longrightarrow 0.
\end{equation}
The reduced norm map induces an isomorphism $K_{1}(\Q_{p}[G]) \longrightarrow \zeta(\Q_{p}[G])^{\times}$
(use \cite[Theorem (45.3)]{MR892316}) and $\Nrd_{\Q_p[G]}(K_{1}(\Z_{p}[G]))=\Nrd_{\Q_p[G]}((\Z_{p}[G])^{\times})$ 
(this follows from  \cite[Theorem (40.31)]{MR892316}). 
Hence from \eqref{eqn:group-ring-K-exact-seq} we obtain an exact sequence
\begin{equation}\label{eqn:group-ring-units-seq}
(\Z_{p}[G])^{\times} \stackrel{\Nrd_{\Q_p[G]}}{\longrightarrow} \zeta(\Q_{p}[G])^{\times} 
  \stackrel{\partial_p}{\longrightarrow} K_{0}(\Z_{p}[G],\Q_{p}) \longrightarrow 0,
\end{equation}
where we write $\partial_p$ for $\partial_{\Z_p[G], \Q_p}$.
The canonical maps 
$K_{0}(\Z[G], \Q) \rightarrow K_{0}(\Z_{p}[G], \Q_{p})$
induce an isomorphism
\begin{equation}\label{eqn:p-part-decomp}
K_{0}(\Z[G], \Q) \simeq \bigoplus_{p} K_{0}(\Z_p[G], \Q_{p})
\end{equation}
where the sum ranges over all primes
(see the discussion following \cite[(49.12)]{MR892316}). By abuse of notation we let
\[
 \partial_p: \zeta(\Q[G])^{\times} \rightarrow K_0(\Z_p[G], \Q_p)
\]
also denote the composite map of the inclusion $\zeta(\Q[G])^{\times} \rightarrow \zeta(\Q_p[G])^{\times}$
and the surjection $\partial_p$ in sequence \eqref{eqn:group-ring-units-seq}.
Finally, the reduced norm $\Nrd_{\R[G]}: K_1(\R[G]) \rightarrow \zeta(\R[G])^{\times}$ is injective
and there is an extended boundary homomorphism
\[
 \hat\partial: \zeta(\R[G])^{\times} \longrightarrow K_0(\Z[G],\R)
\]
such that $\hat\partial \circ \Nrd_{\R[G]}$ coincides with the usual boundary homomorphism
$\partial_{\Z[G], \R}$ in sequence \eqref{eqn:long-exact-seq} (see \cite[\S 4.2]{MR1884523}).

\section{Rationality conjectures} \label{sec:rationality}

\subsection{Artin $L$-series}
Let $L/K$ be a finite Galois extension of number fields with Galois group $G$
and let $S$ be a finite set of places of $K$ containing all archimedean places.
For any irreducible complex-valued character $\chi$ of $G$ we denote the $S$-truncated Artin $L$-series
by $L_S(s, \chi)$, and the leading coefficient of $L_S(s, \chi)$ at an integer $r$ by
$L_S^{\ast}(r, \chi)$.
We will use this notion even if $L_S^{\ast}(r, \chi) = L_S(r, \chi)$
(which will happen frequently in the following).

There is a canonical
isomorphism $\zeta(\C[G]) \simeq \prod_{\chi \in \Irr_{\C}(G)} \C$,
where $\Irr_{\C}(G)$ denotes the set of irreducible complex characters of $G$.
We define the equivariant $S$-truncated Artin $L$-series to be the
meromorphic $\zeta(\C[G])$-valued function
\[
 L_S(s) := (L_S(s,\chi))_{\chi \in \Irr_{\C}(G)}.
\]
For any $r \in \Z$ we also put
\[
 L_S^{\ast}(r) := (L_S^{\ast}(r,\chi))_{\chi \in \Irr_{\C}(G)} \in \zeta(\R[G])^{\times}.
\]
Now let $v \in S_{\infty}$ be an archimedean place of $K$.
Let $\chi$ be an irreducible complex character of $G$ and let $V_{\chi}$ be a $\C[G]$-module with character $\chi$.
We set
\[
 n_{\chi} := \dim_{\C}(V_{\chi}) = \chi(1), \, n_{\chi,v}^+  := \dim_{\C}(V_{\chi}^{G_w}), \,
  n_{\chi,v}^- := n_{\chi} - n_{\chi,v}^+.
\]
We write $S_{\R}$ and $S_{\C}$ for the subsets of $S_{\infty}$
consisting of real and complex places, respectively, and define $\epsilon$-factors
\[
 \epsilon_v(s,\chi) := \left\{
  \begin{array}{lll}
   (2 \cdot (2 \pi)^{-s} \Gamma(s))^{n_{\chi}} & \mbox{if} & v \in S_{\C}, \\
   L_{\R}(s)^{n_{\chi,v}^+} \cdot L_{\R}(s+1)^{n_{\chi,v}^-} & \mbox{if} & v \in S_{\R},
  \end{array}
 \right.
\]
where $L_{\R}(s) := \pi^{-s/2} \Gamma(s/2)$ and $\Gamma(s)$ denotes the usual Gamma function.
The completed Artin $L$-series is then defined to be
\[
 \Lambda(s, \chi) := \left(\prod_{v \in S_{\infty}} \epsilon_v(s,\chi)\right) L_{S_{\infty}}(s,\chi)
  = \prod_{v} \epsilon_v(s,\chi),
\]
where the second product runs over all places of $K$ and for a finite place $v$ of $K$ we have
\[
 \epsilon_v(s,\chi) := \det(1 - \phi_w N(v)^{-s} \mid V_{\chi}^{I_w})^{-1}.
\]
We denote the contragradient of $\chi$ by $\check\chi$. Then the completed Artin
$L$-series satisfies the functional equation
\begin{equation} \label{eqn:function-equation}
 \Lambda(s, \chi) = \epsilon(s, \chi) \Lambda(1-s, \check\chi),
\end{equation}
where the $\epsilon$-factor $\epsilon(s, \chi)$ is defined as follows.
Let $d_K$ be the absolute discriminant of $K$. We write
$W(\chi)$ and $\mathfrak{f}(\chi)$ for the Artin root number and the Artin conductor of $\chi$,
respectively. We then have
\begin{eqnarray*}
 c(\chi) & := & |d_K|^{n_{\chi}} N(\mathfrak{f}(\chi)),\\
 \epsilon(s, \chi) & := & W(\chi) c(\chi)^{1/2 - s}.
\end{eqnarray*}
We also define equivariant $\epsilon$-factors and the completed equivariant Artin $L$-series by
\[
 \epsilon_v(s) :=(\epsilon_v(s,\chi))_{\chi \in \Irr_{\C}(G)}, ~~
  \epsilon(s) :=(\epsilon(s,\check\chi))_{\chi \in \Irr_{\C}(G)}, ~~
  \Lambda(s) := (\Lambda(s,\chi))_{\chi \in \Irr_{\C}(G)}.
\]
The functional equations \eqref{eqn:function-equation} for all irreducibe characters then combine
to give an equality
\begin{equation} \label{eqn:equivariant-fe}
 \Lambda(s)^{\sharp} = \epsilon(s) \Lambda(1-s),
\end{equation}
where $x \mapsto x^{\sharp}$ denotes the $\C$-linear anti-involution of $\C[G]$ which
sends each $g \in G$ to its inverse.

\subsection{A conjecture of Gross}
Let $r>1$ be an integer.
Since the Borel regulator map $\rho_r$ induces an isomorphism of $\R[G]$-modules,
the Noether--Deuring theorem (see \cite[Lemma 8.7.1]{MR2392026} for instance) implies the existence of $\Q[G]$-isomorphisms
\begin{equation} \label{eqn:phi_1-r}
 \phi_{1-r}: H_{1-r}^+(L) \otimes \Q \stackrel{\simeq}{\longrightarrow} K_{2r-1}(\mathcal{O}_L) \otimes \Q.
\end{equation}
Let $\chi$ be a complex character of $G$ and let $V_{\chi}$ be a $\C[G]$-module with character $\chi$.
Composition with $\rho_r \circ \phi_{1-r}$
induces an automorphism of $\Hom_G(V_{\check\chi}, H_{1-r}^+(L) \otimes \C)$.
Let $R_{\phi_{1-r}}(\chi) \in \C^{\times}$ be its determinant. If $\chi'$ is a second character,
then clearly $R_{\phi_{1-r}}(\chi + \chi') = R_{\phi_{1-r}}(\chi) \cdot R_{\phi_{1-r}}(\chi')$ so
that we obtain a map
\begin{eqnarray*}
 R_{\phi_{1-r}}: R(G) & \longrightarrow & \C^{\times} \\
 \chi & \mapsto & \det(\rho_r \circ \phi_{1-r} \mid \Hom_G(V_{\check\chi}, H_{1-r}^+(L) \otimes \C)),
\end{eqnarray*}
where $R(G)$ denotes the ring of virtual complex characters of $G$.
We likewise define
\begin{eqnarray*}
 A_{\phi_{1-r}}^S: R(G) & \longrightarrow & \C^{\times} \\
 \chi & \mapsto & R_{\phi_{1-r}}(\chi) / L_S^{\ast}(1-r, \chi).
\end{eqnarray*}
Gross \cite[Conjecture 3.11]{MR2154331} conjectured the following higher analogue of Stark's conjecture.

\begin{conj}[Gross] \label{conj:Gross}
 We have $A_{\phi_{1-r}}^S(\chi^{\sigma}) = A_{\phi_{1-r}}^S(\chi)^{\sigma}$ for all $\sigma \in \Aut(\C)$.
\end{conj}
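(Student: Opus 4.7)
The plan is to follow the classical template for Stark-type rationality conjectures: reduce the $\Aut(\C)$-equivariance assertion via Brauer induction to the case of linear characters, and then appeal to rationality results for abelian $L$-values.

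First I would verify that the quantity $A_{\phi_{1-r}}^S(\chi)$ is essentially canonical. If $\phi'_{1-r}$ is a second $\Q[G]$-isomorphism as in \eqref{eqn:phi_1-r}, then $\alpha := \phi'_{1-r} \circ \phi_{1-r}^{-1}$ is a $\Q[G]$-automorphism of $H_{1-r}^+(L) \otimes \Q$, and the ratio $R_{\phi'_{1-r}}(\chi)/R_{\phi_{1-r}}(\chi) = \det(\alpha \mid \Hom_G(V_{\check\chi}, H_{1-r}^+(L)\otimes\C))$ lies in $\overline{\Q}$ and visibly transforms correctly under $\Aut(\C)$ acting on $\chi$. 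Likewise, enlarging $S$ by a finite place $v$ multiplies $L_S^*(1-r,\chi)$ by $\det(1 - \phi_w N(v)^{r-1} \mid V_{\check\chi}^{I_w})$, which is manifestly $\Aut(\C)$-equivariant in $\chi$. Hence the conjecture is independent of the auxiliary choices of $\phi_{1-r}$ and $S$.

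Next, I would exploit inductivity in both numerator and denominator. The usual factorization of Artin $L$-series gives $L_S(s, \Ind_H^G \psi) = L_{S}(s, \psi)$ (the latter formed over the intermediate field $L^H$), so the same identity holds for leading coefficients. For the regulator, Frobenius reciprocity combined with the behavior of Borel's regulator under restriction of scalars yields a compatibility of the form $R_{\phi_{1-r}}(\Ind_H^G \psi) = R_{\phi_{1-r}|_H}(\psi)$, where $\phi_{1-r}|_H$ is the $\Q[H]$-isomorphism corresponding to $\phi_{1-r}$. Since both $R_{\phi_{1-r}}$ and $L_S^*(1-r,-)$ are additive on virtual characters, Brauer's induction theorem reduces the verification of Conjecture \ref{conj:Gross} to the case where $\chi$ is a linear character of a subgroup; equivalently, to the case where $L/K$ is abelian and $\chi$ is one-dimensional.

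In the abelian case, the trivial character is handled directly by Borel's rationality statement \eqref{eqn:Borels-rationality}, and for nontrivial linear $\chi$ one would combine known cases of Stark's conjecture at $1-r$ with the comparison between the regulators of Borel and Beilinson (see Burgos Gil's result cited in \S\ref{sec:Borel}) to match Gross's formulation with Beilinson's. The main obstacle is precisely this last step: Gross's conjecture is open in general, and its content for nontrivial abelian $\chi$ is essentially a case of the Beilinson--Deligne rationality conjecture for Dirichlet motives at negative integers. One therefore cannot expect an unconditional proof from this outline, and accordingly Conjecture \ref{conj:Gross} is assumed as a hypothesis throughout the rest of the paper; the reduction above merely isolates the genuine missing input.
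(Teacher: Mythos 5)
The statement you were asked to prove is a conjecture, and the paper offers no proof of it: it is assumed as a hypothesis throughout, and the only thing proved is the list of special cases in Theorem \ref{thm:Gross}, each by citation --- case (i) is Borel's rationality result \eqref{eqn:Borels-rationality}, cases (iii) and (iv) reduce to Siegel's rationality of $L$-values at negative integers because the regulator disappears, and case (ii) follows from the equivalence of Conjecture \ref{conj:Gross} with the rationality part of the ETNC for $(h^0(\Spec(L))(1-r),\Z[G])$ together with the theorem of Burns--Greither and Flach for absolutely abelian extensions. Your proposal correctly ends by acknowledging that no unconditional proof is possible, so there is no hidden error of principle; your preliminary reductions (independence of $S$ and of $\phi_{1-r}$, additivity and inductivity of $R_{\phi_{1-r}}$ and of $L_S^{\ast}(1-r,-)$, Brauer induction to linear characters) are the standard Stark-conjecture formalism and are essentially sound.

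Two points of comparison are worth making. First, the endpoint of your Brauer reduction is not quite what you describe: induction lands you on one-dimensional characters of subgroups $H\leq G$, i.e.\ on abelian extensions of the intermediate fields $L^H$, which are arbitrary number fields; this relative abelian case is open, and it is \emph{not} the case of ``Dirichlet motives'' (abelian over $\Q$). In particular your reduction does not recover even the paper's known case (ii), which concerns absolutely abelian characters and is obtained by a completely different route, namely the translation of Gross' conjecture into the rationality statement of the ETNC (via \cite{MR2591188}) and the proven ETNC for abelian fields. Second, the cases where the paper does have unconditional results that go beyond the trivial character --- (iii) and (iv) --- are invisible in your outline: there the point is simply that $H_{1-r}^+(L)\otimes\C$ has no $\check\chi$-isotypic component, so $R_{\phi_{1-r}}(\chi)=1$ and the conjecture collapses to Siegel's classical rationality of $L_S(1-r,\chi)$. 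If you wanted your write-up to match the paper's content, the useful thing to record is exactly this trichotomy (Borel, Siegel, ETNC for absolutely abelian extensions), rather than the induction machinery, which here buys nothing unconditional.
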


It is straightforward to see that Gross' conjecture does not depend on $S$ and the choice of $\phi_{1-r}$
(see also \cite[Remark 6]{MR2801311}). We briefly collect what is known about Conjecture \ref{conj:Gross}.
When $L/K$ is a CM-extension, recall that $\chi$ is odd when $\chi(j) = - \chi(1)$,
where $j \in G$ denotes complex conjugation.

\begin{theorem} \label{thm:Gross}
Conjecture \ref{conj:Gross} holds in each of the following cases:
\begin{enumerate}
\item
$\chi$ is the trivial character;
\item
$\chi$ is absolutely abelian, i.e.~$L^{\ker(\chi)} / \Q$ is abelian;
\item
$L^{\ker(\chi)}$ is totally real and $r$ is even;
\item
$L^{\ker(\chi)} / K$ is a CM-extension, $\chi$ is an odd character and $r$ is odd.
\end{enumerate}
\end{theorem}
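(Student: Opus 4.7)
The plan is to verify each of the four cases by combining a rank/parity computation on the regulator side with the appropriate classical rationality theorem for Artin $L$-values at negative integers.

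For case (i), $\Hom_G(V_1, H_{1-r}^+(L) \otimes \C)$ canonically identifies (up to a nonzero rational scalar coming from $\phi_{1-r}$) with $H_{1-r}^+(K) \otimes \C$, and via the trace/norm from $L$ to $K$ the determinant $R_{\phi_{1-r}}(1)$ becomes the Borel regulator $R_r(K)$ of $K$. Since $L_S(s,1)$ differs from $\zeta_K(s)$ only by Euler factors at finite primes in $S\setminus S_\infty$, all of which are rational at $s = 1-r$, Borel's theorem \eqref{eqn:Borels-rationality} applied to $K$ yields $A_{\phi_{1-r}}^S(1) \in \Q^\times$, which is tautologically fixed by $\Aut(\C)$.

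For case (ii), I would first reduce to $K = \Q$ using the invariance of Gross's conjecture under inflation (noted after Conjecture \ref{conj:Gross}) and thereby reduce to Dirichlet characters $\chi$ of some $\Gal(F/\Q)$ with $F$ abelian over $\Q$. This essentially coincides with the higher Stark conjecture for abelian $\chi$: when $\chi(-1) \neq (-1)^r$ the Hom space vanishes and one invokes the classical $\Aut(\C)$-equivariance of the generalized Bernoulli numbers via $L(1-r, \chi) = -B_{r,\chi}/r$; when $\chi(-1) = (-1)^r$ the leading term involves the Borel regulator on $F$, and the required identification with the $L$-value follows from Beilinson's theorem for cyclotomic fields together with case (i) applied to the intermediate extensions.

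For cases (iii) and (iv), the common strategy is to show that the Hom space $\Hom_G(V_{\check\chi}, H_{1-r}^+(L) \otimes \C)$ vanishes, so that $R_{\phi_{1-r}}(\chi) = 1$ is an empty determinant. Writing $M := L^{\ker\chi}$ and using Frobenius reciprocity, this reduces to the vanishing of the $\chi$-isotypic component of $H_{1-r}^+(M) \otimes \C$. In case (iii), $M$ is totally real so $\Gal(\C/\R)$ acts trivially on $\Sigma(M)$, and for $r$ even it acts by $(-1)^{r-1} = -1$ on the Tate twist $(2\pi i)^{r-1}$; hence $H_{1-r}^+(M) = 0$ outright. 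In case (iv), $M/K$ is CM with canonical complex conjugation $j \in \Gal(M/K)$, and the defining identity $c \circ \sigma = \sigma \circ j$ on $\Sigma(M)$, combined with the triviality of $c$ on $(2\pi i)^{r-1}$ for $r$ odd, shows that the involutions induced by $c$ and by $j$ on $H_{1-r}(M)$ coincide; hence $H_{1-r}^+(M) = H_{1-r}(M)^{j}$, whose $\chi$-isotypic part is killed by the hypothesis $\chi(j) = -\chi(1)$.

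Once the Hom space vanishes, $A_{\phi_{1-r}}^S(\chi) = L_S(1-r, \chi)^{-1}$, and the required $\Aut(\C)$-equivariance reduces to the classical rationality theorem for Artin $L$-values at negative integers: the Siegel--Klingen theorem in the totally real setting (case (iii)) and its CM-type refinement by Deligne--Ribet (case (iv)). The main technical obstacle I anticipate is the involution bookkeeping in case (iv): one must verify carefully that, for $r$ odd, the two $c$-actions on $H_{1-r}(M)$---the one coming from $\Gal(\C/\R)$ acting diagonally on $\Sigma(M)$ and the Tate twist, and the one coming from the Galois element $j \in G$---really agree, so that the $+$-part collapses to the $j$-invariants. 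Once this identification is pinned down, each remaining assertion is a direct appeal to classical theorems already in the literature.
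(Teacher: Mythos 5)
Your handling of (i), (iii) and (iv) is in substance the same as the paper's: (i) is Borel's result \eqref{eqn:Borels-rationality} (applied over $K$), and in (iii) and (iv) your parity and CM-involution computations are exactly the content behind the paper's remark that ``the regulator map disappears'', after which one quotes the classical $\Aut(\C)$-equivariant rationality of Artin $L$-values at negative integers; the paper cites Siegel for both (iii) and (iv), so your appeal to Deligne--Ribet in (iv) is stronger than needed but harmless. Where you genuinely diverge is case (ii): the paper deduces it from the equivalence of Conjecture \ref{conj:Gross} with the rationality part of the ETNC for the pair $(h^0(\Spec(L))(1-r),\Z[G])$ together with the theorem of Burns--Greither, Flach and Huber--Kings that the ETNC holds for absolutely abelian extensions, whereas you propose the older, direct route through Dirichlet characters, Bernoulli numbers and Beilinson's theorem on cyclotomic fields. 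That route is viable, but two points in your sketch need repair. First, your parity split is backwards: $L(1-r,\chi)=-B_{r,\chi}/r$ is nonzero precisely when $\chi(-1)=(-1)^r$, so that is the subcase in which the Hom-space vanishes and Bernoulli equivariance suffices, while for $\chi(-1)\neq(-1)^r$ the $L$-function vanishes to order one at $s=1-r$ and the regulator genuinely enters. Second, reducing to the base field $\Q$ requires invariance of the conjecture under \emph{induction} of characters, not inflation (the remark after Conjecture \ref{conj:Gross} only records independence of $S$ and $\phi_{1-r}$); since $\chi$ is a character of $\Gal(L^{\ker\chi}/K)$ and only its induction to $\Gal(L^{\ker\chi}/\Q)$ lives over $\Q$, you must state and use the inductivity of $A^S_{\phi_{1-r}}$ (standard, and in Gross's paper, but not free).

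Finally, be aware that in your reduction the entire difficulty of case (ii) is concentrated in the wrong-parity subcase, where ``Beilinson's theorem for cyclotomic fields together with case (i)'' is doing all the work: what is needed there is the $\Aut(\C)$-equivariant comparison of the regulator with the leading term for every Dirichlet character with nonvanishing Hom-space, i.e.\ Beilinson's theorem as completed by Neukirch, Esnault and Huber--Wildeshaus (or, equivalently, the abelian ETNC). Making that input explicit would put your argument on the same footing as the paper's, which simply imports it wholesale via the cited ETNC results.
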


\begin{proof}
(i) is Borel's result \eqref{eqn:Borels-rationality} above. In cases (iii) and (iv) the regulator map disappears,
and Conjecture \ref{conj:Gross} boils down to the rationality of the $L$-values at negative integers
which is a classical result of Siegel \cite{MR0285488}. Finally, Gross' conjecture for all
characters $\chi$ of $G$ is equivalent to
the rationality statement of the ETNC for the pair $(h^0(\Spec(L))(1-r), \Z[G])$ by
\cite[Lemma 6.1.1 and Lemma 11.1.2]{MR2591188} (see also \cite[Proposition 2.15]{MR2801311}).
In fact, the full ETNC is known for absolutely abelian extensions by work of Burns and Greither \cite{MR1992015}
and of Flach \cite{MR2863902} (see also Huber and Kings \cite{MR2002643}) which implies (ii).
\end{proof}

\begin{remark} \label{rem:hom-description}
 Let $f: R(G) \rightarrow \C^{\times}$ be a homomorphism. Then we may view $f$ as an element in
 $\zeta(\C[G])^{\times} \simeq \prod_{\chi \in \Irr_{\C}(G)} \C^{\times}$ by declaring its
 $\chi$-component to be $f(\chi)$, $\chi \in \Irr_{\C}(G)$. Conversely, each
 $f = (f_{\chi})_{\chi \in \Irr_{\C}(G)}$ in $\zeta(\C[G])^{\times}$ defines a unique
 homomorphism $f: R(G) \rightarrow \C^{\times}$ such that $f(\chi) = f_{\chi}$ for each
 $\chi \in \Irr_{\C}(G)$. Under this identification Conjecture \ref{conj:Gross} asserts that
 $A_{\phi_{1-r}}^S \in \zeta(\C[G])^{\times}$ actually belongs to $\zeta(\Q[G])^{\times}$.
\end{remark}

\subsection{A reformulation of Gross' conjecture}
In this subsection we give a reformulation of Gross' conjecture using the functional equation of Artin $L$-series.
For any integer $k$ we write
\begin{equation} \label{eqn:iota_k}
  \iota_k: L \otimes_{\Q} \C \rightarrow \bigoplus_{\Sigma(L)} \C =
  \left(H_{1-k}^+(L) \oplus H_{-k}^+(L)\right) \otimes \C
\end{equation}
for the canonical $\C[G \times \Gal(\C / \R)]$-equivariant isomorphism which is induced by mapping
$l \otimes z$ to $(\sigma(l)z)_{\sigma \in \Sigma(L)}$ for $l \in L$
and $z \in \C$. Now fix an integer $r>1$.
We define an $\R[G]$-isomorphism
\[
 \lambda_r: \left(K_{2r-1}(\mathcal{O}_L) \oplus H_{-r}^+(L)\right) \otimes \R \simeq
 \left(H_{1-r}^+(L) \oplus H_{-r}^+(L)\right) \otimes \R \simeq
 (L \otimes_{\Q} \C)^+ = L \otimes_{\Q} \R.
\]
Here, the first isomorphism is $\rho_r \oplus \id_{H_{-r}^+(L)}$ and the second isomorphism is
induced by $\iota_r^{-1}$. As above, there exist $\Q[G]$-isomorphisms
\[
 \phi_r: L \stackrel{\simeq}{\longrightarrow} \left(K_{2r-1}(\mathcal{O}_L) \oplus H_{-r}^+(L)\right) \otimes \Q.
\]
We now define maps
\begin{eqnarray*}
 R_{\phi_{r}}: R(G) & \longrightarrow & \C^{\times} \\
 \chi & \mapsto & \det\left(\lambda_r \circ \phi_{r} \mid
  \Hom_G(V_{\check\chi}, L \otimes_{\Q} \C)\right)
\end{eqnarray*}
and
\begin{eqnarray*}
 A_{\phi_{r}}^S: R(G) & \longrightarrow & \C^{\times} \\
 \chi & \mapsto & R_{\phi_{r}}(\chi) / L_S^{\ast}(r, \check\chi).
\end{eqnarray*}

\begin{conj} \label{conj:Gross-re}
 We have $A_{\phi_{r}}^S(\chi^{\sigma}) = A_{\phi_{r}}^S(\chi)^{\sigma}$ for all $\sigma \in \Aut(\C)$.
\end{conj}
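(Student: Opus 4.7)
The plan is to prove Conjecture \ref{conj:Gross-re} by reducing it to Gross' conjecture (Conjecture \ref{conj:Gross}) via the functional equation \eqref{eqn:equivariant-fe}, so that Conjecture \ref{conj:Gross-re} inherits the cases listed in Theorem \ref{thm:Gross}.

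The first step is to put the two regulators in directly comparable form. Since Conjecture \ref{conj:Gross-re} is independent of the choice of $\phi_r$ (and Conjecture \ref{conj:Gross} of $\phi_{1-r}$), I would fix any $\phi_{1-r}$ as in \eqref{eqn:phi_1-r} and any $\Q[G]$-isomorphism $\psi: L \simeq (H_{1-r}^+(L) \oplus H_{-r}^+(L)) \otimes \Q$ (which exists by Noether--Deuring together with the rank formula \eqref{eqn:rank-Betti}), and then set $\phi_r := (\phi_{1-r} \oplus \id_{H_{-r}^+(L) \otimes \Q}) \circ \psi$. Since $\lambda_r = \iota_r^{-1} \circ (\rho_r \oplus \id_{H_{-r}^+(L) \otimes \R})$, unwinding the definitions and computing determinants on the $V_{\check\chi}$-isotypic component of $L \otimes_{\Q} \C$ yields
\[
R_{\phi_r}(\chi) = R_{\phi_{1-r}}(\chi) \cdot \det\bigl(\iota_r^{-1} \mid \Hom_G(V_{\check\chi}, L \otimes_{\Q} \C)\bigr) \cdot \det\bigl(\psi \mid \Hom_G(V_{\check\chi}, L \otimes_{\Q} \C)\bigr).
\]
The last factor is manifestly $\Aut(\C)$-equivariant in $\chi$, as $\psi$ is defined over $\Q$.

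The second step is to apply the equivariant functional equation \eqref{eqn:equivariant-fe} at $s=r$ to express the ratio $L_S^{\ast}(r,\check\chi)/L_S^{\ast}(1-r,\chi)$ as a product of (i) the global $\epsilon$-factor $W(\chi) c(\chi)^{1/2-r}$, (ii) ratios of archimedean factors $\epsilon_v(r,\check\chi)/\epsilon_v(1-r,\chi)$ for $v \in S_{\infty}$, and (iii) ratios of local Euler factors $\det(1-\phi_w N(v)^{-s}\mid V_\chi^{I_w})^{-1}$ at $s=r$ and $s=1-r$ for $v \in S \setminus S_{\infty}$. The factors in (iii) are visibly $\Aut(\C)$-equivariant in $\chi$. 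Combining the remaining correction factors with $\det(\iota_r^{-1}|_\chi)$ from the previous step, one gets a product of powers of $(2\pi i)$ and algebraic numbers: by the definition of $\iota_r$ together with $H_k(L)=\bigoplus_{\Sigma(L)}(2\pi i)^{-k}\Z$, the period $\det(\iota_r|_\chi)$ is a prescribed power of $2\pi i$ (with exponent depending on $r$ and the multiplicities $n_{\chi,v}^{\pm}$) times an algebraic number, and the expected cancellation against the $\Gamma$-values in (ii) is precisely what one would hope for.

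The main obstacle is the explicit matching in this last step. On the period side, $\det(\iota_r|_\chi)$ must be computed summand by summand over orbits of $\Gal(\C/\R)$ on $\Sigma(L)$, distinguishing the parity of $r$ to align with the different structures of $\epsilon_v$ at real and complex places. On the $L$-series side one must extract the precise behaviour of the $\Gamma$-values $\Gamma(r)$, $\Gamma(1-r)$, $\Gamma(r/2)$, $\Gamma((1-r)/2)$ and invoke the classical Galois-equivariance of the Artin root number $W(\chi)$ paired with $\sqrt{|d_K|}^{n_\chi}$-factors (via Deligne's theory of local constants). Once this matching is carried out, the ratio $A_{\phi_r}^S(\chi)/A_{\phi_{1-r}}^S(\chi)$ is $\Aut(\C)$-equivariant in $\chi$, so Conjecture \ref{conj:Gross-re} is equivalent to Conjecture \ref{conj:Gross} and in particular holds in the cases listed in Theorem \ref{thm:Gross}.
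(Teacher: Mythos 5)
Your overall route coincides with the paper's: since the statement is a conjecture, what can actually be proved is its equivalence with Gross' Conjecture \ref{conj:Gross} (this is Proposition \ref{prop:Gross-equivalent}), after which the cases of Theorem \ref{thm:Gross} are inherited; and your reduction via the functional equation \eqref{eqn:equivariant-fe}, with the finite Euler factors at $v \in S \setminus S_{\infty}$ and the determinant of the rational isomorphism $\psi$ contributing only $\Aut(\C)$-equivariant quantities, is exactly the skeleton of that proof.

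There is, however, a genuine gap precisely where you place ``the main obstacle'': you never prove the $\Aut(\C)$-equivariance of the product of the comparison period (the determinant of $\iota_r^{-1}$ taken with respect to the rational structures $L$ and $\left(H_{1-r}^+(L)\oplus H_{-r}^+(L)\right)\otimes \Q$ on the two sides) with the global $\epsilon$-constant $W(\chi)c(\chi)^{1/2-s}$. This is not a matter of matching powers of $2\pi i$ against $\Gamma$-values: the $\Gamma$-factors and powers of $\pi$ only account for the archimedean factors $\epsilon_v$, whereas $W(\chi)$ and $N(\mathfrak{f}(\chi))^{1/2}$ have no archimedean counterpart, and $W(\chi)$ is not Galois-equivariant on its own. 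The input the paper uses at exactly this point is Fr\"ohlich's theorem on Galois Gauss sums, imported through the Bley--Burns statement \eqref{eqn:BB-rationality}: for the explicitly constructed rational comparison $\phi$ of \eqref{eqn:BB-phi} one has $\Nrd_{\R[G]}((\phi \otimes 1)\circ \pi_L^{-1})\cdot \epsilon(0) \in \zeta(\Q[G])^{\times}$. The paper then passes from $s=0$ to $s=1-r$ using $c(\chi^{\sigma})=c(\chi)^{\sigma}$, and only after that does the elementary bookkeeping you describe take over (the computations \eqref{eqn:first-epsilon}, \eqref{eqn:second-epsilon} and the fact that $\mu_r \circ \pi_L \circ \iota_r^{-1}$ is, up to sign, multiplication by $(2\pi)^{r-1}$ and $(2\pi)^r$ on the two summands). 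Your appeal to ``Deligne's theory of local constants'' points at the right circle of ideas but is not an argument; likewise, the observation that $\det(\iota_r\mid \Hom_G(V_{\check\chi}, L\otimes_{\Q}\C))$ is a prescribed power of $2\pi i$ times ``an algebraic number'' carries no force until one controls how that algebraic number transforms under $\Aut(\C)$ --- and that is exactly the Gauss-sum/root-number statement left unproved in your sketch.
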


It is again easily seen that this conjecture does not depend on $S$ and the choice of $\phi_r$.
In fact we have the following result.

\begin{prop} \label{prop:Gross-equivalent}
 Fix an integer $r>1$ and a character $\chi$. Then Gross' Conjecture \ref{conj:Gross}
 holds if and only if Conjecture \ref{conj:Gross-re} holds.
\end{prop}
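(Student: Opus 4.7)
The plan is to relate the two rationality conjectures directly via the equivariant functional equation, by choosing the auxiliary isomorphisms $\phi_r$ and $\phi_{1-r}$ compatibly and isolating an explicit comparison factor. First, since $\iota_r$ restricts to an $\R[G]$-isomorphism $L \otimes_\Q \R \simeq (H_{1-r}^+(L) \oplus H_{-r}^+(L)) \otimes \R$, Noether--Deuring (\cite[Lemma 8.7.1]{MR2392026}) furnishes a $\Q[G]$-isomorphism $\psi: L \simeq (H_{1-r}^+(L) \oplus H_{-r}^+(L)) \otimes \Q$. I would set $\phi_r := (\phi_{1-r} \oplus \id_{H_{-r}^+(L) \otimes \Q}) \circ \psi$, which is legitimate since each conjecture is independent of the choice of $\phi$. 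Substituting the definition $\lambda_r = \iota_r^{-1} \circ (\rho_r \oplus \id)$ produces the factorization
$$\lambda_r \circ \phi_r \;=\; \iota_r^{-1} \circ \bigl((\rho_r \circ \phi_{1-r}) \oplus \id\bigr) \circ \psi$$
as endomorphisms of $L \otimes_\Q \R$. Multiplicativity of determinants on the $\chi$-isotypic piece $\Hom_G(V_{\check\chi}, L \otimes_\Q \C)$ then yields
$$R_{\phi_r}(\chi) = P(\chi) \cdot R_{\phi_{1-r}}(\chi), \qquad P(\chi) := \det(\iota_r^{-1} \circ \psi \mid \chi).$$

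Dividing by $L$-values reduces the proposition to the assertion that
$$T(\chi) := P(\chi) \cdot \frac{L_S^*(1-r, \chi)}{L_S^*(r, \check\chi)}$$
is $\Aut(\C)$-equivariant on $R(G)$. For this, I would apply the equivariant functional equation \eqref{eqn:equivariant-fe} at $s = r$: both $\Lambda(r, \check\chi)$ and $\epsilon(r, \check\chi) = W(\check\chi) c(\check\chi)^{1/2-r}$ are finite and nonzero for $r > 1$, so $\Lambda(1-r, \chi) = \Lambda^*(1-r, \chi)$ is as well. Expanding $\Lambda(s, \chi) = \prod_{v \in S} \epsilon_v(s, \chi) \cdot L_S(s, \chi)$ on both sides and comparing leading terms produces the explicit identity
$$\frac{L_S^*(1-r, \chi)}{L_S^*(r, \check\chi)} = \frac{1}{\epsilon(r, \check\chi)} \cdot \prod_{v \in S} \frac{\epsilon_v(r, \check\chi)}{\epsilon_v^*(1-r, \chi)}.$$

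The hard part will be to check that $T(\chi)$ is actually $\Aut(\C)$-equivariant. The Euler factors $\epsilon_v(\cdot, \chi)$ at finite places $v \in S_f$ pose no difficulty, being polynomial expressions in $\chi$-character values with $\Q$-coefficients. The substance lies in the archimedean factors and the global $\epsilon$-factor: the powers of $2\pi i$ built into the definition $H_k(L) = \bigoplus_{\Sigma(L)} (2\pi i)^{-k} \Z$ enter $P(\chi)$ through the determinant of $\iota_r$, and one must show that they balance precisely the archimedean Gamma-factor contributions $\epsilon_v(r, \check\chi) / \epsilon_v^*(1-r, \chi)$ for $v \in S_\infty$ together with the conductor--root-number combination $\epsilon(r, \check\chi)^{-1}$. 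This is a classical type of calculation, akin to the one carried out in Tate's treatment of Stark's conjecture and in Fr\"ohlich's theory of Galois--Gauss sums; the deviation of $P(\chi)$ from $\Aut(\C)$-equivariance and the corresponding deviation of the functional-equation factors are governed by the same cocycle on $\Aut(\C)$, so they cancel to give $\Aut(\C)$-equivariance of $T(\chi)$.
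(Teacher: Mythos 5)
Your reduction is the same as the paper's: pick $\phi_r$ as $(\phi_{1-r}\oplus\mathrm{id})\circ\psi$ for some $\Q[G]$-isomorphism $\psi: L\simeq (H_{1-r}^+(L)\oplus H_{-r}^+(L))\otimes\Q$, factor $R_{\phi_r}(\chi) = P(\chi)\cdot R_{\phi_{1-r}}(\chi)$ with $P(\chi)=\det(\iota_r^{-1}\circ\psi\mid\chi)$, and use the functional equation at $s=r$ to convert the $L$-value ratio into the product of $\epsilon$-factors. That is all correct, and your formula
\[
\frac{L_S^{\ast}(1-r,\chi)}{L_S^{\ast}(r,\check\chi)}
= \frac{1}{\epsilon(r,\check\chi)}\cdot\prod_{v\in S}\frac{\epsilon_v(r,\check\chi)}{\epsilon_v^{\ast}(1-r,\chi)}
\]
is the right identity. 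The paper makes exactly the same moves.

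The gap is your last paragraph. You correctly identify that the whole content is the $\Aut(\C)$-equivariance of $T(\chi) = P(\chi)\,L_S^{\ast}(1-r,\chi)/L_S^{\ast}(r,\check\chi)$, but then you assert rather than prove that the equivariance defects of $P(\chi)$, the archimedean $\Gamma$-ratios, and the conductor--root-number factor ``are governed by the same cocycle on $\Aut(\C)$'' and so cancel. This is precisely the theorem that has to be established, and neither the root number $W(\check\chi)$ nor $P(\chi)$ is $\Aut(\C)$-equivariant on its own, so the cancellation is genuinely nontrivial. Two separate inputs are needed that you leave unspecified. First, the root-number contribution is handled by a deep result of Fr\"ohlich on Galois Gauss sums; the paper imports it in packaged form via Bley--Burns, specifically the fact that $\Nrd_{\R[G]}((\phi\otimes 1)\circ\pi_L^{-1})\cdot\epsilon(0)\in\zeta(\Q[G])^{\times}$ for the explicit $\Q[G]$-isomorphism $\phi: L\simeq H_0(L)\otimes\Q$ of \cite{MR2005875}. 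Your $\psi$ chosen by Noether--Deuring is not concrete enough to hook into this; the paper's choice $\psi = \mu_r\circ\phi$, with $\mu_r$ the explicit multiplication-by-$(2\pi i)^{k}$ map, is designed exactly so that the determinant of $\iota_r^{-1}\circ\psi$ splits into a ``Fr\"ohlich part'' (killed by the Bley--Burns result against $\epsilon(0)$, then transported to $\epsilon(1-r)$ by the $\Aut(\C)$-equivariance of $c(\chi)$) and a pure $\pi$-power part. Second, the residual $\pi$-power matching against the archimedean $\Gamma$-ratios must actually be carried out: the paper computes $\epsilon_v(r)^{\sharp}/\epsilon_v^{\ast}(1-r)$ separately for $v$ complex and $v$ real and for $r$ odd and even, and checks that the exponents of $\pi$ occurring in $\Nrd_{\C[G]}(\iota_r^{-1}\circ\mu_r\circ\pi_L)^{\sharp}$ exactly cancel them, character by character. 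Without naming these two ingredients and doing (or at least exhibiting) the exponent count, the final step of your argument is a restatement of the conclusion rather than a proof of it.
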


\begin{proof}
 Let $k$ be an integer. If $k$ is even, multiplication by $(2 \pi i)^{k}$ induces  a $\Q[G]$-isomorphism
 $H_0^+(L) \otimes \Q \simeq H_{-k}^+(L) \otimes \Q$. Similarly, multiplication by $(2 \pi i)^{k-1}$
 induces  a $\Q[G]$-isomorphism $H_0^-(L) \otimes \Q \simeq H_{1-k}^+(L) \otimes \Q$.
 When $k$ is odd, we likewise have $\Q[G]$-isomorphisms $H_0^+(L) \otimes \Q \simeq H_{1-k}^+(L) \otimes \Q$
 and $H_0^-(L) \otimes \Q \simeq H_{-k}^+(L) \otimes \Q$ induced by multiplication by $(2 \pi i)^{k-1}$
 and $(2 \pi i)^{k}$, respectively. So for any $k$ we obtain a $\Q[G]$-isomorphism
 \begin{equation} \label{eqn:mu-iso}
  \mu_k: H_0(L) \otimes \Q \simeq \left(H_{1-k}^+(L) \oplus H_{-k}^+(L)\right) \otimes \Q.
 \end{equation}
 Moreover, we define an $\R[G]$-isomorphism
 \[
  \pi_L: L \otimes_{\Q} \R \simeq \left(H_0^+(L) \oplus H_{-1}^+(L)\right) \otimes \R
  \stackrel{(1, -i)}{\longrightarrow} H_0(L) \otimes \R,
 \]
 where the first isomorphism is induced by $\iota_1$. It is clear that $\pi_L$ agrees
 with the map $\pi_L$ in \cite[p.~554]{MR2005875}.
 Bley and Burns define an explicit $\Q[G]$-isomorphism 
 \begin{equation} \label{eqn:BB-phi}
   \phi: L \stackrel{\simeq}{\longrightarrow} H_0(L) \otimes \Q.
 \end{equation} 
 Building on a result of Fr\"ohlich
 \cite{MR993218} on Galois Gauss sums, the authors \cite[equation (12) and (13)]{MR2005875}
 then show that
 \begin{equation} \label{eqn:BB-rationality}
   \Nrd_{\R[G]}((\phi \otimes 1) \circ \pi_L^{-1}) \cdot \epsilon(0) \in \zeta(\Q[G])^{\times}.
 \end{equation}
 Now choose a $\Q[G]$-isomorphism $\phi_{1-r}$ as in \eqref{eqn:phi_1-r}. We define
 $\phi_r$ to be the composite map
 \[
  \phi_r := (\phi_{1-r} \oplus \id_{H_{-r}^+(L) \otimes \Q}) \circ \mu_r \circ \phi.
 \]
 Let $a,b \in \zeta(\C[G])^{\times}$. In the following we write $a \sim b$
 if $ab^{-1} \in \zeta(\Q[G])^{\times}$. Under the identification in Remark \ref{rem:hom-description}
 we thus have to show that $A_{\phi_r}^S \sim A_{\phi_{1-r}}^S$. We observe that
 \[
  \lambda_r \circ \phi_r = \iota_r^{-1} \circ (\rho_r \oplus \id_{H_{-r}^+(L)}) \circ
    (\phi_{1-r} \oplus \id_{H_{-r}^+(L)}) \circ \mu_r \circ \phi,
 \]
 where we view each map as a $\C[G]$-isomorphism by extending scalars. This implies that
 \[
  R_{\phi_r}  =  R_{\phi_{1-r}} \cdot \Nrd_{\C[G]}(\iota_r^{-1} \circ \mu_r \circ \phi)^{\sharp}.
 \]
 We now use \eqref{eqn:BB-rationality}, the fact that $c(\chi^{\sigma}) = c(\chi)^{\sigma}$
 for all $\chi \in \Irr_{\C}(G)$ and $\sigma \in \Aut(\C)$,
 the definition of $\epsilon$ and the functional equation
 \eqref{eqn:equivariant-fe} to compute
 \begin{eqnarray*}
  R_{\phi_r} & \sim & R_{\phi_{1-r}} \cdot \left(\Nrd_{\C[G]}(\iota_r^{-1} \circ \mu_r \circ \pi_L)
    \cdot \epsilon(0)^{-1}\right)^{\sharp} \\
  & \sim & R_{\phi_{1-r}} \cdot \left(\Nrd_{\C[G]}(\iota_r^{-1} \circ \mu_r \circ \pi_L)
    \cdot \epsilon(1-r)^{-1}\right)^{\sharp} \\
  & \sim & R_{\phi_{1-r}} \cdot \Nrd_{\C[G]}(\iota_r^{-1} \circ \mu_r \circ \pi_L)^{\sharp}
    \cdot \frac{\Lambda^{\ast}(r)^{\sharp}}{\Lambda^{\ast}(1-r)}
 \end{eqnarray*}
 Now let $v$ be an archimedean place of $K$. As $\Gamma(k)$ is a non-zero rational number for every positive
 integer $k$ and $\Gamma(s)$ has simple poles with rational residues at $s = k$ for every non-positive
 integer $k$, an easy computation shows that for $v \in S_{\C}$ one has
 \begin{equation} \label{eqn:first-epsilon}
  \frac{\epsilon_v(r)^{\sharp}}{\epsilon_v^{\ast}(1-r)} \sim
    \left(\pi^{(1 - 2r) n_{\chi}}\right)_{\chi \in \Irr_{\C}(G)}.
 \end{equation}
 Moreover, using $\Gamma(s+1) = s \Gamma(s)$ and $\Gamma(1/2) = \sqrt{\pi}$
 we find that $\Gamma((2k+1)/2) \in \sqrt{\pi} \cdot \Q^{\times}$ for every integer $k$. Then
 a computation shows that for $v \in S_{\R}$ one has
 \begin{equation} \label{eqn:second-epsilon}
  \frac{\epsilon_v(r)^{\sharp}}{\epsilon_v^{\ast}(1-r)} \sim \left\{
  \begin{array}{lll}
   \left(\pi^{(1 - r) n_{\chi,v}^+ - r n_{\chi,v}^-}\right)_{\chi \in \Irr_{\C}(G)} & \mbox{ if } & 2 \nmid r \\
   \left(\pi^{(1 - r) n_{\chi,v}^- - r n_{\chi,v}^+}\right)_{\chi \in \Irr_{\C}(G)} & \mbox{ if } & 2 \mid r.
  \end{array}
  \right.
 \end{equation}
 The automorphism $\mu_r \circ \pi_L \circ \iota_r^{-1}$ on
 $(H_{1-r}^+(L) \oplus H_{-r}^+(L)) \otimes \C$ is given up to sign by multiplication
 by $(2\pi)^{r-1}$ and $(2\pi)^r$ on the first and second direct summand, respectively.
 It follows that
 \begin{eqnarray*}
  \Nrd_{\C[G]}(\iota_r^{-1} \circ \mu_r \circ \pi_L)^{\sharp} & = &
    \Nrd_{\C[G]}(\mu_r \circ \pi_L \circ \iota_r^{-1})^{\sharp} \\
   & \sim & \left\{
  \begin{array}{lll}
   \left(\pi^{(r-1) (|S_{\C}| n_{\chi} + \sum_{v \in S_{\R}} n_{\chi,v}^+) + r (|S_{\C}| n_{\chi}  + \sum_{v \in S_{\R}} n_{\chi,v}^-)}\right)_{\chi}  & \mbox{ if } & 2 \nmid r \\
   \left(\pi^{(r-1) (|S_{\C}| n_{\chi} + \sum_{v \in S_{\R}} n_{\chi,v}^-) + r (|S_{\C}| n_{\chi}  + \sum_{v \in S_{\R}} n_{\chi,v}^+)}\right)_{\chi} & \mbox{ if } & 2 \mid r.
  \end{array}
  \right.
 \end{eqnarray*}
 If we compare this to \eqref{eqn:first-epsilon} and \eqref{eqn:second-epsilon} we find that
 \[
    \Nrd_{\C[G]}(\iota_r^{-1} \circ \mu_r \circ \pi_L)^{\sharp} \sim \left(\prod_{v \in S_{\infty}} \frac{\epsilon_v(r)^{\sharp}}{\epsilon_v^{\ast}(1-r)}\right)^{-1}.
 \]
 Finally, by the very definition of $\Lambda(s)$ we have
 $\Lambda(s) = \left(\prod_{v \in S_{\infty}} \epsilon_v(s)\right) \cdot L_{S_{\infty}}(s)$.
 We obtain
 \begin{eqnarray*}
    R_{\phi_r} & \sim & R_{\phi_{1-r}} \cdot \Nrd_{\C[G]}(\iota_r^{-1} \circ \mu_r \circ \pi_L)^{\sharp}
    \cdot \left(\prod_{v \in S_{\infty}} \frac{\epsilon_v(r)^{\sharp}}{\epsilon_v^{\ast}(1-r)}\right)
    \cdot \frac{L_{S_{\infty}}^{\ast}(r)^{\sharp}}{L_{S_{\infty}}^{\ast}(1-r)} \\
    & \sim & R_{\phi_{1-r}}
    \cdot \frac{L_{S_{\infty}}^{\ast}(r)^{\sharp}}{L_{S_{\infty}}^{\ast}(1-r)} \\
 \end{eqnarray*}
 which exactly means that
 \[
  A_{\phi_r}^{S_{\infty}} = \frac{R_{\phi_r}}{L_{S_{\infty}}^{\ast}(r)^{\sharp}}
  \sim \frac{R_{\phi_{1-r}}}{L_{S_{\infty}}^{\ast}(1-r)} = A_{\phi_{1-r}}^{S_{\infty}}.
 \]
 As both conjectures do not depend on the choice of $S$ we are done.
\end{proof}

\section{Equivariant leading term conjectures} \label{sec:LTC}

We fix a finite Galois extension $L/K$ with Galois group $G$ and an odd prime $p$.
Let $r>1$ be an integer. In this section we assume throughout that
{\it Schneider's conjecture \ref{conj:Schneider} holds}.
In particular, if $S$ is a sufficiently large finite set of places of $K$ as in
\S \ref{sec:Schneider}, then the complex $C_{L,S}(r) \in \mathcal D(\Z_p[G])$ 
constructed in \S \ref{subsec:conj-perfect} is perfect by Proposition \ref{prop:conj-perfect}.

\subsection{Choosing a trivialization} \label{subsec:trivialization}
In this subsection we construct a trivialization of $C_{L,S}(r)$.
We first choose a $\Q[G]$-isomorphism
\begin{equation} \label{eqn:alpha}
 \alpha_r: L \rightarrow \left(H_{1-r}^+(L) \oplus H_{-r}^+(L)\right) \otimes \Q.
\end{equation}
For instance, we may take $\alpha_r = \mu_r \circ \phi$, where $\mu_r$ and $\phi$ 
are the isomorphisms \eqref{eqn:mu-iso} and \eqref{eqn:BB-phi}, respectively.
Moreover, we choose a $\Q[G]$-isomorphism
\[
 \phi_{1-r}: H_{1-r}^+(L) \otimes \Q \stackrel{\simeq}{\longrightarrow} K_{2r-1}(\mathcal{O}_L) \otimes \Q
\]
as in \eqref{eqn:phi_1-r}.  We let
$\psi_r := (\phi_{1-r}, \alpha_r)$ be the corresponding pair of $\Q[G]$-isomorphisms.
As $\Sha^1(\mathcal O_{L,S}, \Z_p(r))$ vanishes by Proposition \ref{prop:Schneider-equivalence}
and $\Sha^2(\mathcal O_{L,S}, \Z_p(r))$ is finite by Proposition \ref{prop:cs-cohomology} (iv),
we have an exact sequence of $\Q_p[G]$-modules
\begin{equation} \label{eqn:splitting-sequence}
 0 \rightarrow H^1_{\et}(\mathcal O_{L,S}, \Q_p(r)) \rightarrow P^1(\mathcal O_{L,S}, \Q_p(r)) 
 \rightarrow H^2_c(\mathcal O_{L,S}, \Q_p(r)) \rightarrow 0.
\end{equation}
Since $\Q_p[G]$ is semisimple, we may choose
a $\Q_p[G]$-equivariant splitting $\sigma_r$ of this sequence.
We now define a trivialization $t(\psi_r,\sigma_r,S)$ of $C_{L,S}(r)$ to be
the composite of the following $\Q_p[G]$-isomorphisms
(note that we have
$H^{ev}(C_{L,S}(r)) = H^2(C_{L,S}(r))$ and $H^{odd}(C_{L,S}(r)) = H^3(C_{L,S}(r))$
by Proposition \ref{prop:conj-perfect}):
\begin{eqnarray*}
 H^3(C_{L,S}(r)) \otimes_{\Z_p} \Q_p 
 & \longrightarrow & \left(H_{1-r}^+(L) \oplus H_{-r}^+(L)\right) \otimes \Q_p\\
 & \stackrel{\alpha_r^{-1}}{\longrightarrow} & L \otimes_{\Q} \Q_p\\
 & \stackrel{\exp_r^{BK}}{\longrightarrow} & P^1(\mathcal O_{L,S}, \Q_p(r))\\
 & \stackrel{\sigma_r}{\longrightarrow} & H^1_{\et}(\mathcal O_{L,S}, \Q_p(r)) \oplus H^2_c(\mathcal O_{L,S}, \Q_p(r))\\
 & \stackrel{(ch_{r,1}^{(p)})^{-1} \oplus \id}{\longrightarrow} & (K_{2r-1}(\mathcal O_{L,S}) \otimes \Q_p) \oplus H^2_c(\mathcal O_{L,S}, \Q_p(r))\\
 & \longrightarrow & (K_{2r-1}(\mathcal O_{L}) \otimes \Q_p) \oplus H^2_c(\mathcal O_{L,S}, \Q_p(r))\\ 
 & \stackrel{\phi_{1-r}^{-1} \oplus \id}{\longrightarrow} & (H_{1-r}^+(L) \otimes \Q_p) \oplus H^2_c(\mathcal O_{L,S}, \Q_p(r))\\
 & \longrightarrow & H^2(C_{L,S}(r)) \otimes_{\Z_p} \Q_p.
\end{eqnarray*}
Here, the unlabelled isomorphisms come from Propositions \ref{prop:conj-perfect} 
and \ref{prop:cs-cohomology} (ii) and \eqref{eqn:odd-iso}.
We now define
\[
 \Omega_{\psi_r, S} := \chi_{\Z_p[G], \Q_p}(C_{L,S}(r), t(\psi_r,\sigma_r,S))
 \in K_0(\Z_p[G], \Q_p)
\]
which is easily seen to be independent of the splitting $\sigma_r$
(see \S \ref{subsec:K-theory} and \S \ref{subsec:Euler-char}).

\subsection{The leading term conjecture at $s=r$}
We are now in a position to formulate the central conjectures of this article.
Recall the notation of the last subsection and in particular the pair 
$\psi_r = (\phi_{1-r}, \alpha_r)$.
Define a $\Q[G]$-isomorphism
\[
\phi_r: L \stackrel{\simeq}{\longrightarrow} \left(K_{2r-1}(\mathcal{O}_L) \oplus H_{-r}^+(L)\right) \otimes \Q
\]
by $\phi_r := (\phi_{1-r} \oplus \id_{H_{-r}^+(L)}) \circ \alpha_r$.

\begin{conj} \label{conj:LTC}
  Let $L/K$ be a finite Galois extension of number fields with Galois group $G$
  and let $r>1$ be an integer. Let $p$ be an odd prime.
  \begin{enumerate}
   \item 
   The Tate--Shafarevich group $\Sha^1(\mathcal O_{L,S}, \Z_p(r))$ vanishes.
   \item
   We have that $A_{\phi_r}^S$ belongs to $\zeta(\Q[G])^{\times}$.
   \item
   We have an equality
   $\partial_p(A_{\phi_r}^S)^{\sharp} = - \Omega_{\psi_r, S}$.
  \end{enumerate}
\end{conj}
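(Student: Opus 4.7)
The plan is to first verify that the conjecture is well-posed, that is, the refined Euler characteristic $\Omega_{\psi_r,S}$ and the virtual element $A_{\phi_r}^S$ are independent of the auxiliary data: the splitting $\sigma_r$ of \eqref{eqn:splitting-sequence}, the set $S$ beyond the standing hypotheses, and the pair $\psi_r$ (the last point only modulo the content of Gross' conjecture). Independence of $\sigma_r$ is standard, as any two splittings differ by a $\Q_p[G]$-map $H^2_c(\mathcal O_{L,S},\Q_p(r)) \to H^1_{\et}(\mathcal O_{L,S},\Q_p(r))$ whose contribution to $\chi_{\Z_p[G],\Q_p}$ cancels. Independence of $S$ is a local computation using the additivity of refined Euler characteristics along the natural localization triangle at a prime $w \notin S(L)$, combined with the explicit description \eqref{eqn:K-theory_finitefield} of $K_{2r-1}(L(w))$ and the matching local Euler factor in $L_S^{\ast}(r)$.

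Parts (i) and (ii) will then follow from earlier results: (i) is precisely Schneider's Conjecture \ref{conj:Schneider}, and (ii) is equivalent to Gross' Conjecture \ref{conj:Gross} by Proposition \ref{prop:Gross-equivalent}. Hence in the cases listed in Theorem \ref{thm:Gross} and Theorem \ref{thm:Schneider}, neither (i) nor (ii) presents difficulties; in particular both hold unconditionally when $L$ is totally real and $r>1$ is even.

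The core of the argument is part (iii). The strategy is to identify Conjecture \ref{conj:LTC}(iii) with the $p$-part of the ETNC for the pair $(h^0(\Spec(L))(r), \Z[G])$ in the form of Burns--Flach \cite{MR1884523}. The trivialization $t(\psi_r,\sigma_r,S)$ assembles exactly the comparison isomorphisms that appear in the ETNC fundamental line: Soul\'e's Chern class $ch_{r,1}^{(p)}$ identifies $p$-adic $K$-theory with \'etale cohomology, the Bloch--Kato exponential $\exp_r^{BK}$ realizes the de Rham--\'etale comparison at the $p$-adic places, $\phi_{1-r}$ encodes the Borel regulator on a rational level, and $\alpha_r$ plays the role of the Betti--de Rham period isomorphism. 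Using these identifications together with the functional equation \eqref{eqn:equivariant-fe} and the archimedean $\epsilon$-factor computations already carried out in the proof of Proposition \ref{prop:Gross-equivalent}, one should translate the statement of the ETNC at $s=1-r$ to the shape of Conjecture \ref{conj:LTC}(iii) at $s=r$; the $\sharp$-involution reflects the switch from $\chi$ to $\check\chi$ dictated by the functional equation.

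The main obstacle will be the precise matching between the refined Euler characteristic formalism in $K_0(\Z_p[G],\Q_p)$ used here and the virtual object / determinant formalism of \cite{MR1884523}. This requires careful bookkeeping of signs and a verification that the implicit choices of splittings and bases entering the fundamental line contribute only the $\epsilon$-factor corrections already extracted in Proposition \ref{prop:Gross-equivalent}, so that no extra discrepancy survives. Once this equivalence with the ETNC is established, the conjecture holds unconditionally in cases where the ETNC is known, in particular for absolutely abelian extensions by \cite{MR1992015}, \cite{MR2863902} and \cite{MR2002643}, which, combined with the known cases of Schneider's conjecture, will yield Conjecture \ref{conj:LTC} for all $r>1$ whenever $L/\Q$ is abelian.
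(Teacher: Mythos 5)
Since this statement is a conjecture, the paper does not prove it outright; rather, it establishes exactly the reduction you outline: well-posedness (Proposition \ref{prop:independent}), the identification of parts (i) and (ii) with the conjectures of Schneider and Gross, and the equivalence of part (iii) with the $p$-part of the ETNC for $(\Q(r)_L, \Z[G])$ (Theorem \ref{thm:compare-ETNC}), together with unconditional validity in cases where the ETNC is known. Your plan matches this development in both structure and technical content, so it takes essentially the same approach as the paper.
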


\begin{remark}
 Part (i) and (ii) of Conjecture \ref{conj:LTC} are equivalent to Schneider's conjecture
 \ref{conj:Schneider} and Gross' conjecture \ref{conj:Gross}
 by Propositions \ref{prop:Schneider-equivalence}
 and \ref{prop:Gross-equivalent}, respectively.
\end{remark}

\begin{prop} \label{prop:independent}
Suppose that part (i) and part (ii) of Conjecture \ref{conj:LTC} both hold.
Then part (iii) does not depend on any of the choices
made in the construction. 
\end{prop}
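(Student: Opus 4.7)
The plan is to compare both sides of Conjecture \ref{conj:LTC}(iii) under each type of choice entering the construction: the $\Q_p[G]$-equivariant splitting $\sigma_r$ of \eqref{eqn:splitting-sequence}, the pair $\psi_r = (\phi_{1-r}, \alpha_r)$ of $\Q[G]$-isomorphisms, and the finite set $S$. The invariance of $\Omega_{\psi_r, S}$ under $\sigma_r$ is already noted in \S\ref{subsec:trivialization}, so it suffices to treat the other two. In each case I shall show that both sides of (iii) change by equal and opposite amounts in $K_0(\Z_p[G], \Q_p)$.

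For the pair $\psi_r$, I will let $\psi_r' = (\phi_{1-r} \circ \beta, \gamma \circ \alpha_r)$ be a second choice, where $\beta$ and $\gamma$ are $\Q[G]$-automorphisms of $H_{1-r}^+(L) \otimes \Q$ and $(H_{1-r}^+(L) \oplus H_{-r}^+(L)) \otimes \Q$ respectively, and set $\delta := \alpha_r^{-1} \circ (\beta \oplus \id) \circ \gamma \circ \alpha_r \in \Aut_{\Q[G]}(L)$, so that $\phi_r' = \phi_r \circ \delta$. The definitions of $R_{\phi_r}$ and $A_{\phi_r}^S$ together with Remark \ref{rem:hom-description} then yield $A_{\phi_r'}^S = A_{\phi_r}^S \cdot \Nrd(\delta)^{\sharp}$, from which an application of $\partial_p \circ (-)^{\sharp}$ produces
\[
\partial_p(A_{\phi_r'}^S)^{\sharp} - \partial_p(A_{\phi_r}^S)^{\sharp} = \partial_p(\Nrd(\delta)).
\]
On the trivialization side, $t(\psi_r', \sigma_r, S)$ differs from $t(\psi_r, \sigma_r, S)$ by the insertion of $\gamma^{-1}$ just before the occurrence of $\alpha_r^{-1}$ and of $\beta^{-1} \oplus \id$ just after the occurrence of $\phi_{1-r}^{-1} \oplus \id$. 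Using the general principle that modification of a trivialization by an intermediate automorphism $\mu$ changes the refined Euler characteristic by $\partial_p(\Nrd(\mu))$, together with the identity $\Nrd(\mu \oplus \id) = \Nrd(\mu)$ and the conjugation-invariance of the reduced norm (which gives $\Nrd(\delta) = \Nrd(\beta)\Nrd(\gamma)$), I obtain $\Omega_{\psi_r', S} - \Omega_{\psi_r, S} = -\partial_p(\Nrd(\delta))$. The two contributions cancel, and the $\psi_r$-independence follows.

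For the set $S$, it suffices to enlarge $S$ to $S' := S \cup \{v\}$ for a single finite place $v \notin S$; by the standing hypothesis $v \notin S_{\ram} \cup S_p$, so $I_w = 1$ for $w \mid v$. The defining product expansion of $L_S(s, \chi)$ and Remark \ref{rem:hom-description} give
\[
\partial_p(A_{\phi_r}^S)^{\sharp} - \partial_p(A_{\phi_r}^{S'})^{\sharp} = \partial_p(E_v),
\]
where $E_v \in \zeta(\Q[G])^{\times}$ has $\chi$-component $\det(1 - \phi_w N(v)^{-r} \mid V_{\chi})$. The localization distinguished triangle for compact support cohomology connects $R\Gamma_c(\mathcal O_{L,S'}, \Z_p(r))$, $R\Gamma_c(\mathcal O_{L,S}, \Z_p(r))$ and a two-term unramified local complex at $v$ whose differential is multiplication by $1 - \phi_w N(v)^{-r}$. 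Transporting this triangle through the cone construction defining $C_{L,S}(r)$ and $C_{L,S'}(r)$, and invoking the additivity of refined Euler characteristics over distinguished triangles, I find $\Omega_{\psi_r, S} - \Omega_{\psi_r, S'} = -\partial_p(E_v)$, which exactly cancels the $L$-value variation.

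The hardest step will be the detailed bookkeeping in the $\psi_r$-variation: I must pinpoint precisely where in the defining composition each of $\gamma^{-1}$ and $\beta^{-1} \oplus \id$ is inserted, exploit $\Nrd(\mu \oplus \id) = \Nrd(\mu)$ and the conjugation-invariance of $\Nrd$ to combine the two contributions into $\Nrd(\delta)$, and reconcile the appearances of $\sharp$ on $\zeta(\Q[G])^{\times}$ with its counterpart on $K_0(\Z_p[G], \Q_p)$ intertwined by $\partial_p$. The $S$-independence is more conceptual once the localization triangle is in hand, but still demands care in identifying the reduced norm of the Frobenius differential with the Euler factor $E_v$.
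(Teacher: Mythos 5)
Your proposal is correct and follows essentially the same route as the paper: for the choice of $\psi_r$ you compare the change in $A_{\phi_r}^S$ (via the analogue of the paper's equation \eqref{eqn:A-difference}) with the change in $\Omega_{\psi_r,S}$, both being $\partial_p$ of the reduced norm of the comparison automorphism, and for $S$ you reduce to adjoining a single place $v\nmid p$ unramified in $L/K$ and use the localization triangle involving $\Ind_{G_w}^G R\Gamma_f(L_w,\Z_p(r))$ together with additivity of refined Euler characteristics to match the Euler factor $\epsilon_v(r)$. The only cosmetic differences are that you treat $\phi_{1-r}$ and $\alpha_r$ simultaneously (the paper does $\alpha_r$ and remarks that $\phi_{1-r}$ is similar) and you should note, as the paper does, that passing the triangle through the cone construction uses the standing hypothesis (i) to identify $H^1_c(\mathcal O_{L,S},\Z_p(r))$ independently of $S$.
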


\begin{proof}
 Let $S'$ be a second sufficiently large finite set of places of $K$. By embedding
 $S$ and $S'$ into the union $S \cup S'$ we may and do assume that $S \subseteq S'$.
 By induction we may additionally assume that $S' = S \cup \left\{v \right\}$,
 where $v$ is not in $S$. In particular, $v$ is unramified in $L/K$ and 
 $v \nmid p$. We compute
 \begin{equation} \label{eqn:compare-S}
  A^{S'}_{\phi_r}(\chi) / A^{S}_{\phi_r}(\chi) = L_S^{\ast}(r, \check \chi) / L_{S'}^{\ast}(r, \check \chi)
  = \epsilon_v(r, \check \chi).
 \end{equation}
 On the other hand, by \cite[(30)]{MR1884523} we have an exact triangle
 \[
  \Ind_{G_w}^G R\Gamma_f(L_w, \Z_p(r))[-1] \longrightarrow R\Gamma_c(\mathcal O_{L,S'}, \Z_p(r))
  \longrightarrow R\Gamma_c(\mathcal O_{L,S}, \Z_p(r)),
 \]
 where $R\Gamma_f(L_w, \Z_p(r))$ is a perfect 
 complex of $\Z_p[G_w]$-modules which is naturally quasi-isomorphic to
 \[\xymatrix{
  \Z_p[G_w] \ar[rr]^{1 - \phi_w N(v)^{-r}} & & \Z_p[G_w]
 }\]
 with terms in degree $0$ and $1$.
 As Schneider's conjecture holds by assumption, the cohomology group
 $H^1_c(\mathcal O_{L,S}, \Z_p(r))$ does not depend on $S$ by Proposition \ref{prop:cs-cohomology}.
 Thus by the definition of $C_{L,S}(r)$ we likewise have an exact triangle
 \[
  \Ind_{G_w}^G R\Gamma_f(L_w, \Z_p(r))[-1] \longrightarrow C_{L,S'}(r)
  \longrightarrow C_{L,S}(r).
 \]
 We therefore may compute
 \begin{eqnarray*}
  \Omega_{\psi_r, S} -  \Omega_{\psi_r, S'} & = &\chi_{\Z_p[G], \Q_p}\left(\Ind_{G_w}^G R\Gamma_f(L_w, \Z_p(r)), 0\right)\\
  & = & \partial_p(\epsilon_v(r)) \\
  & = & \partial_p(A_{\phi_r}^{S'})^{\sharp} - \partial_p(A_{\phi_r}^{S})^{\sharp},
 \end{eqnarray*}
 where the last equality follows from \eqref{eqn:compare-S}. 
 This shows that Conjecture \ref{conj:LTC} (iii) does not depend on $S$.
 Now suppose that $\alpha_r'$ is a second choice of $\Q[G]$-isomorphism
 as in \eqref{eqn:alpha}. Let $\phi_r' := (\phi_{1-r} \oplus \id_{H_{-r}^+(L)}) \circ \alpha_r'$.
 Then we have
 \begin{eqnarray} \label{eqn:A-difference}
  \left(A_{\phi_r}^S \cdot \left(A_{\phi_r'}^S\right)^{-1}\right)^{\sharp} & = & 
    \left(R_{\phi_r} \cdot R_{\phi_r'}^{-1}\right)^{\sharp}\\
  & = &\Nrd_{\Q[G]}((\phi_r')^{-1} \phi_r) \nonumber \\
  & = &\Nrd_{\Q[G]}((\alpha_r')^{-1} \alpha_r). \nonumber
 \end{eqnarray}
 Letting $\psi_r' := (\phi_{1-r}, \alpha_r')$ we likewise compute
 \begin{eqnarray*}
  \Omega_{\psi_r, S} -  \Omega_{\psi_r', S} & = & \partial_p\left(\Nrd_{\Q[G]}(\alpha_r' \alpha_r^{-1})\right)\\
  & = & - \partial_p\left(\Nrd_{\Q[G]}((\alpha_r')^{-1} \alpha_r)\right).
 \end{eqnarray*}
 Finally, a similar computation shows that the conjecture does not depend on the choice of
 $\phi_{1-r}$.
\end{proof}

It is therefore convenient to put
\[
 T\Omega(L/K,r)_p := - \left(\partial_p(A_{\phi_r}^S)^{\sharp} + \Omega_{\psi_r, S}\right)
 \in K_0(\Z_p[G], \Q_p).
\]
Then Conjecture \ref{conj:LTC} (iii) simply asserts that $T\Omega(L/K,r)_p$ vanishes.
The reason for the minus sign will become apparent in the next subsection
(see Theorem \ref{thm:compare-ETNC}).

Now choose an isomorphism $j: \C \simeq \C_p$. By functoriality, this induces
a map 
\[j_{\ast}: K_0(\Z[G], \R) \rightarrow K_0(\Z_p[G], \C_p).
\]
We define a trivialization $t(r,S,j)$ of the complex $C_{L,S}(r)$ as in \S \ref{subsec:trivialization},
but we tensor with $\C_p$ and replace the isomorphisms $\alpha_r$ and $\phi_{1-r}$ by
$\iota_r \otimes_{j} \C_p$ and $\rho_r^{-1} \otimes_{j} \C_p$
(see \eqref{eqn:regulator} and \eqref{eqn:iota_k}). 
Thus we obtain an object
\[
 \Omega_{r,S}^j := \chi_{\Z_p[G], \C_p}(C_{L,S}(r), t(r,S,j)) \in K_0(\Z_p[G], \C_p).
\]
Then the argument in the proof of Proposition \ref{prop:independent} shows 
the following result.

\begin{prop}
 Let $j: \C \simeq \C_p$ be an isomorphism. Suppose that part (i) and (ii)
 of Conjecture \ref{conj:LTC} both hold. Then we have an equality
 \[
  T\Omega(L/K,r)_p = j_{\ast}\left( \hat\partial(L_S^{\ast}(r))\right) - \Omega_{r,S}^j
 \]
 in $K_0(\Z_p[G], \C_p)$.
\end{prop}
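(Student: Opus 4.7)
The plan is to follow the template of the proof of Proposition \ref{prop:independent}, but now to compare $\Omega_{\psi_r, S}$ (pushed forward to $K_0(\Z_p[G], \C_p)$ via the natural map) with $\Omega_{r,S}^j$. By construction, the two trivializations $t(\psi_r, \sigma_r, S)$ (base-changed via $j$) and $t(r, S, j)$ of the complex $C_{L,S}(r)$ differ in precisely two steps: the $\Q[G]$-isomorphism $\alpha_r$ is replaced by the $\R[G]$-isomorphism $\iota_r$ (restricted to plus-parts and base-changed via $j$), and the $\Q[G]$-isomorphism $\phi_{1-r}^{-1}$ is replaced by the Borel regulator $\rho_r$ (base-changed via $j$). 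Invoking twice the standard compatibility of refined Euler characteristics under change of trivialization (exactly as in the second half of the proof of Proposition \ref{prop:independent}), one obtains
\[
  \Omega_{r,S}^j - \Omega_{\psi_r, S} = \partial_{\Z_p[G], \C_p}\!\left(j\!\left(\Nrd_{\R[G]}(\iota_r^{-1} \circ \alpha_r) \cdot \Nrd_{\R[G]}(\rho_r \circ \phi_{1-r})\right)\right)
\]
in $K_0(\Z_p[G], \C_p)$; each reduced norm is well-defined since $\iota_r^{-1} \circ \alpha_r$ and $\rho_r \circ \phi_{1-r}$ are $\R[G]$-automorphisms of $L \otimes_\Q \R$ and $H_{1-r}^+(L) \otimes_\Q \R$ respectively.

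The next step is to recognize this product of reduced norms as $R_{\phi_r}^\sharp$. From the definitions of $\lambda_r$ and $\phi_r$ in \S \ref{sec:rationality},
\[
  \lambda_r \circ \phi_r \;=\; \iota_r^{-1} \circ \bigl((\rho_r \circ \phi_{1-r}) \oplus \id_{H_{-r}^+(L) \otimes \R}\bigr) \circ \alpha_r,
\]
so by the multiplicativity of $\Nrd$ (and by the identification of $R_{\phi_r}$ with a reduced norm twisted by $\sharp$, exactly as used inside the proof of Proposition \ref{prop:Gross-equivalent}), one has
\[
  R_{\phi_r}^\sharp = \Nrd_{\R[G]}(\lambda_r \circ \phi_r) = \Nrd_{\R[G]}(\iota_r^{-1} \circ \alpha_r) \cdot \Nrd_{\R[G]}(\rho_r \circ \phi_{1-r}).
\]
Combined with the first step this gives $\Omega_{r,S}^j - \Omega_{\psi_r, S} = \partial_{\Z_p[G], \C_p}(j(R_{\phi_r}^\sharp))$ in $K_0(\Z_p[G], \C_p)$.

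To finish, use the identity $(A_{\phi_r}^S)^\sharp = R_{\phi_r}^\sharp \cdot (L_S^\ast(r))^{-1}$ in $\zeta(\C[G])^\times$, immediate from the definition of $A_{\phi_r}^S$, together with the compatibility $j_\ast \circ \hat\partial = \partial_{\Z_p[G], \C_p} \circ j$ on $\zeta(\R[G])^\times$, which is a formal consequence of the functoriality of the long exact sequence \eqref{eqn:long-exact-seq} under the scalar extension induced by $j$. Under Conjecture \ref{conj:LTC} (ii) the left-hand side lies in $\zeta(\Q[G])^\times$, so applying $\partial_p$ and $\sharp$ yields
\[
  \partial_p(A_{\phi_r}^S)^\sharp \;=\; \partial_{\Z_p[G], \C_p}\!\left(j(R_{\phi_r}^\sharp)\right) - j_\ast\!\left(\hat\partial(L_S^\ast(r))\right)
\]
in $K_0(\Z_p[G], \C_p)$. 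Substituting this together with the conclusion of the previous paragraph into the definition $T\Omega(L/K, r)_p = -\partial_p(A_{\phi_r}^S)^\sharp - \Omega_{\psi_r, S}$ gives the desired equality. The only real obstacle is bookkeeping: one must pin down the precise sign and sharp conventions in the definitions of $R_{\phi_r}$ and $\partial_p$, and verify carefully the $j$-compatibility of $\hat\partial$ and $\partial_{\Z_p[G], \C_p}$ on $\zeta(\R[G])^\times$.
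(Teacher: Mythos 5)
Your proposal is correct and is essentially the paper's own argument: the paper proves this proposition simply by invoking "the argument in the proof of Proposition \ref{prop:independent}", i.e.\ exactly the change-of-trivialization computation you carry out, with the $j$-twisted pair $(\rho_r^{-1},\iota_r)$ playing the role of a second choice of $(\phi_{1-r},\alpha_r)$, so that the discrepancy is $\partial(j(\Nrd_{\C[G]}(\lambda_r\circ\phi_r)))=\partial(j(R_{\phi_r}^{\sharp}))$ and the definition of $A_{\phi_r}^S$ together with the $j$-compatibility of the boundary maps yields the stated identity. Your bookkeeping of signs and the $\sharp$-convention (via $R_{\phi_r}^{\sharp}=\Nrd_{\C[G]}(\lambda_r\circ\phi_r)$, consistent with \eqref{eqn:A-difference}) matches the paper's conventions.
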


\subsection{The relation to the equivariant Tamagawa number conjecture}
We now compare our invariant $T\Omega(L/K,r)_p$ to the 
equivariant Tamagawa number conjecture (ETNC) as formulated by 
Burns and Flach \cite{MR1884523}. 

For an arbitrary integer $r$ we set 
$\Q(r)_L := h^0(\Spec(L))(r)$ which we regard as a motive defined over $K$
and with coefficients in the semisimple algebra $\Q[G]$. 
The ETNC \cite[Conjecture 4(iv)]{MR1884523} for the pair
$(\Q(r)_L, \Z[G])$ asserts that a certain canonical element
$T\Omega(\Q(r)_L, \Z[G])$ in $K_0(\Z[G], \R)$ vanishes.
Note that in this case the element $T\Omega(\Q(r)_L, \Z[G])$
is indeed well-defined as observed in \cite[\S 1]{MR1981031}.
If $T\Omega(\Q(r)_L, \Z[G])$ is rational, i.e. belongs to $K_0(\Z[G], \Q)$, then by means of
\eqref{eqn:p-part-decomp} we obtain elements 
$T\Omega(\Q(r)_L, \Z[G])_p$ in $K_0(\Z_p[G], \Q_p)$.
We say that the `$p$-part' of the ETNC for the pair $(\Q(r)_L, \Z[G])$
holds if $T\Omega(\Q(r)_L, \Z[G])_p$ vanishes.

\begin{theorem} \label{thm:compare-ETNC}
 Let $L/K$ be a finite Galois extension of number fields with Galois group $G$
 and let $r>1$ be an integer. Then the following holds.
 \begin{enumerate}
  \item 
  Conjecture \ref{conj:LTC} (ii) holds if and only if 
  $T\Omega(\Q(r)_L, \Z[G])$ belongs to $K_0(\Z[G],\Q)$.
  \item
  Suppose that part (i) and (ii) of Conjecture \ref{conj:LTC} both hold.
  Then 
  \[T\Omega(\Q(r)_L, \Z[G])_p = T\Omega(L/K,r)_p.\]
  In particular, Conjecture \ref{conj:LTC} (iii) and the $p$-part
  of the ETNC for the pair $(\Q(r)_L, \Z[G])$ are equivalent.
 \end{enumerate}
\end{theorem}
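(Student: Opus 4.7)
The plan is to prove Part (i) by chaining together known equivalences, and then to prove Part (ii) by unwinding the Burns--Flach definition of $T\Omega(\Q(r)_L, \Z[G])$ and matching it term by term with the explicit description of $T\Omega(L/K,r)_p$ provided by the Proposition immediately preceding the theorem, namely
\[
 T\Omega(L/K,r)_p = j_{\ast}(\hat\partial(L_S^{\ast}(r))) - \Omega_{r,S}^j
\]
for any chosen isomorphism $j \colon \C \simeq \C_p$.

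For Part (i), I would observe that Conjecture \ref{conj:LTC}(ii) is equivalent, via Proposition \ref{prop:Gross-equivalent}, to Gross's Conjecture \ref{conj:Gross}. As recalled in the proof of Theorem \ref{thm:Gross}, Gross's conjecture for all characters of $G$ is in turn equivalent, by \cite{MR2591188}, to the rationality of $T\Omega(\Q(1-r)_L, \Z[G])$; the functional equation of Artin $L$-series then provides the corresponding equivalence for $T\Omega(\Q(r)_L, \Z[G])$. Read backwards this gives the claim.

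For Part (ii), I would first verify that under the rationality hypothesis the right-hand side of the displayed formula above lies in the image of the natural injection $K_0(\Z_p[G], \Q_p) \hookrightarrow K_0(\Z_p[G], \C_p)$ and is independent of $j$. The remaining task is to match $j_{\ast}(\hat\partial(L_S^{\ast}(r))) - \Omega_{r,S}^j$ with the Burns--Flach element. Their recipe for the pair $(\Q(r)_L, \Z[G])$ assembles the perfect complex $R\Gamma_c(\mathcal O_{L,S}, \Z_p(r))$ (perfect by Proposition \ref{prop:perfect-r}) together with the archimedean period isomorphism $\iota_r$, the Bloch--Kato exponential $\exp_r^{BK}$, Soul\'e's Chern class maps (bijective by Theorem \ref{thm:Quillen--Lichtenbaum}), and Borel's regulator $\rho_r$ --- exactly the data encoded by the trivialization $t(r,S,j)$ of \S\ref{subsec:trivialization}. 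The auxiliary direct summands $(H_{1-r}^+(L) \otimes \Z_p)\{2,3\}$ introduced in the construction of $C_{L,S}(r)$ contribute trivially to the refined Euler characteristic, since they appear as an acyclic pair in consecutive degrees, so replacing $C_{L,S}(r)$ by $R\Gamma_c(\mathcal O_{L,S}, \Z_p(r))$ leaves $\Omega_{r,S}^j$ intact. It then remains to verify that the $L$-value contribution $j_{\ast}(\hat\partial(L_S^{\ast}(r)))$ matches the leading-term input of the Burns--Flach formalism.

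The main obstacle is the bookkeeping of signs and dualities: the minus sign in the definition of $T\Omega(L/K,r)_p$ and the involution $(-)^{\sharp}$ arise because the Burns--Flach formulation invokes Artin--Verdier and local Tate duality relating $\Z_p(r)$ to $\Q_p / \Z_p(1-r)$, and a careful check is needed to confirm that our conventions agree with those of \cite{MR1884523}. A secondary, more mechanical obstacle is verifying that the motivic comparison maps of Burns and Flach (formulated in terms of Fontaine's period rings) factor through $\exp_r^{BK}$, $\iota_r$, $\rho_r$, and $ch_{r,i}^{(p)}$ exactly as prescribed by $t(r,S,j)$; this is essentially a tracing exercise once the individual identifications are in place.
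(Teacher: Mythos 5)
Your argument for part (i) is essentially the paper's: Conjecture \ref{conj:LTC}(ii) $\Leftrightarrow$ Gross via Proposition \ref{prop:Gross-equivalent}, then rationality of $T\Omega(\Q(1-r)_L,\Z[G])$ via \cite{MR2591188}, then passage from $1-r$ to $r$; note only that the last step is not literally ``the functional equation of Artin $L$-series'' but the compatibility of the ETNC elements with the functional equation, i.e.\ \cite[Theorem 5.2]{MR1884523}.

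For part (ii) there is a genuine gap at exactly the point you dismiss as ``a tracing exercise.'' The leading-term input of the Burns--Flach construction is $L_{S_\infty}^{\ast}(r)$ (one has $T\Omega(\Q(r)_L,\Z[G]) = \hat\partial(L_{S_\infty}^{\ast}(r)) + R\Omega(\Q(r)_L,\Z[G])$), whereas $T\Omega(L/K,r)_p = j_{\ast}(\hat\partial(L_S^{\ast}(r))) - \Omega_{r,S}^j$ involves the $S$-truncated value $L_S^{\ast}(r)$. These differ by $\hat\partial\bigl(\prod_{v \in S_f}\epsilon_v(r)\bigr)$, so a naive matching of the two trivializations cannot give equality; the proof has to produce precisely these Euler factors from the comparison of the period-regulator data. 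The paper does this by observing that for each finite $v \in S$ an acyclic complex enters the Burns--Flach construction --- for $v \nmid p$ the complex $\Q_p[\overline{G_w}] \xrightarrow{1-\phi_w N(v)^{-r}} \Q_p[\overline{G_w}]$ quasi-isomorphic to $R\Gamma_f(L_w,\Q_p(r))$, and for $v \mid p$ the complex $D_{cris}^{L_w}(\Q_p(r)) \xrightarrow{1-\phi_{cris}} D_{cris}^{L_w}(\Q_p(r))$ --- and that Burns and Flach trivialize these via the isomorphisms $\tau_{[P]}$ in the category of virtual objects, while the trivialization $t(r,S,j)$ implicitly uses their acyclicity; the two choices differ by exactly $\epsilon_v(r)$ (the argument of \cite[\S 2]{MR1657186}), which accounts for the $S$-truncation and yields $j_{\ast}(R\Omega(\Q(r)_L,\Z[G])) = -\Omega_{r,S}^j + j_{\ast}(\hat\partial(\prod_{v\in S}\epsilon_v(r)))$. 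Your proposal never isolates this discrepancy, and in particular your claim that the Burns--Flach comparison at $p$-adic places factors only through $\exp_r^{BK}$ overlooks the crystalline complex entirely. A secondary inaccuracy: passing from $C_{L,S}(r)$ to $R\Gamma_c(\mathcal O_{L,S},\Z_p(r))$ is not just dropping the acyclic pair $(H_{1-r}^+(L)\otimes\Z_p)\{2,3\}$, since the cone construction also relocates $H^1_c(\mathcal O_{L,S},\Z_p(r))$ from degree $1$ to degree $3$; the paper handles this identification of classes via the $\tau_{[N]}$-isomorphisms for both summands before comparing trivializations, rather than by a claim that the refined Euler characteristic is unchanged.
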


\begin{proof}
 Conjecture \ref{conj:LTC}(ii) is equivalent to Gross' conjecture \ref{conj:Gross}
 by Proposition \ref{prop:Gross-equivalent}. The latter conjecture
 is equivalent to the rationality of $T\Omega(\Q(1-r)_L, \Z[G])$
 by \cite[Lemma 6.1.1 and Lemma 11.1.2]{MR2591188}. Finally,
 $T\Omega(\Q(1-r)_L, \Z[G])$ is rational if and only if $T\Omega(\Q(r)_L, \Z[G])$
 is rational by \cite[Theorem 5.2]{MR1884523}. This proves (i).
 
 For (ii) we briefly recall some basic facts on virtual objects.
 If $\Lambda$ is a noetherian ring, we write $V(\Lambda)$ for the Picard category of virtual objects
 associated to the category $\PMod(\Lambda)$. We fix a unit object $\mathbbm{1}_{\Lambda}$
 and write $\boxtimes$ for the bifunctor in $V(\Lambda)$.
 For each object $M$ there is an object $M^{-1}$, unique up to unique isomorphism,
 with an isomorphism $\tau_M: M \boxtimes M^{-1} \stackrel{\sim}{\rightarrow} \mathbbm{1}_{\Lambda}$.
 If $N$ is an object in $\PMod(\Lambda)$, we write $[N]$ for the associated object in $V(\Lambda)$.
 More generally, if $C^{\bullet}$ belongs to $\mathcal{D}^{\perf}(\Lambda)$, we write
 $[C^{\bullet}] \in V(\Lambda)$ for the associated object (see \cite[Proposition 2.1]{MR1884523}).
 We let $V(\Z_p[G], \C_p)$ be the Picard category associated
 to the ring homomorphism $\Z_p[G] \hookrightarrow \C_p[G]$ as defined in
 \cite[\S 5]{MR2200204}. We recall that objects in $V(\Z_p[G], \C_p)$ are pairs
 $(M, t)$, where $M$ is an object in $V(\Z_p[G])$ and $t$ is an isomorphism
 $\C_p \otimes_{\Z_p} M \simeq \mathbbm{1}_{\C_p[G]}$ in $V(\C_p[G])$. 
 By \cite[Lemma 5.1]{MR2200204} one has an isomorphism
 \begin{equation} \label{eqn:Picard-K0}
   \pi_0(V(\Z_p[G], \C_p)) \simeq K_0(\Z_p[G], \C_p),
 \end{equation}
 where $\pi_0(\mathcal P)$ denotes the group of isomorphism classes of objects
 of a Picard category $\mathcal P$.
 
 For any motive $M$ which is defined over $K$ and admits an action of a finite dimensional
 $\Q$-algebra $A$, Burns and Flach \cite[(29)]{MR1884523} define an element 
 $\Xi(M)$ of $V(A)$. In the case $M = \Q(r)_L$ and $A = \Q[G]$ one has
 \[
  \Xi(\Q(r)_L) = [K_{2r-1}(\mathcal{O}_L) \otimes \Q]^{-1} \boxtimes [H_{-r}^+(L) \otimes \Q]^{-1}
    \boxtimes [L] \in V(\Q[G]).
 \]
 The regulator map \eqref{eqn:regulator} and \eqref{eqn:iota_k} then induce an isomorphism in $V(\R[G])$:
 \[
  \vartheta_{\infty}: \R \otimes_{\Q} \Xi(\Q(r)_L) \simeq \mathbbm{1}_{\R[G]}.
 \]
 Moreover, Burns and Flach construct for each prime $p$ an isomorphism
 \[
  \vartheta_{p}: \Q_p \otimes_{\Q} \Xi(\Q(r)_L) \simeq [R\Gamma_c(\mathcal{O}_{L,S}, \Q_p(r))]
 \]
 in $V(\Q_p[G])$ (see \cite[p. 526]{MR1884523}). These data determine an element $R\Omega(\Q(r)_L, \Z[G])$
 in $K_0(\Z[G], \R)$ and one has 
 $T\Omega(\Q(r)_L, \Z[G]) = \hat\partial(L_{S_{\infty}}^{\ast}(r)) +  R\Omega(\Q(r)_L, \Z[G])$ 
 by definition.
 
 Now suppose that part (i) and (ii) of Conjecture \ref{conj:LTC} both hold.
 Recall the definition of $C_{L,S}(r)$. 
 The isomorphisms $\tau_{[N]}$, where $N = H^1_c(\mathcal{O}_{L,S}, \Z_p(r))$ and 
 $N = H_{1-r}^+(L) \otimes \Z_p$, yield an isomorphism
 \begin{equation} \label{eqn:PC-iso}
   [C_{L,S}(r)] \simeq [R\Gamma_c(\mathcal{O}_{L,S}, \Z_p(r))]
 \end{equation}
 in $V(\Z_p[G])$. Now let $j:\C \simeq \C_p$ be an isomorphism. Then the trivialization
 $t(r,S,j)$ induces an isomorphism
 \[
    \vartheta_{p,j}: [\C_p \otimes_{\Z_p}^{\mathbb L} C_{L,S}(r)] \simeq \mathbbm{1}_{\C_p[G]}
 \]
 in $V(\C_p[G])$. After extending scalars to $\C_p$, the isomorphisms \eqref{eqn:PC-iso}, 
 $\vartheta_p^{-1}$ and $\vartheta_{\infty}$ likewise induce an isomorphism
 \[
    \vartheta_{p,j}': [\C_p \otimes_{\Z_p}^{\mathbb L} C_{L,S}(r)] \simeq \mathbbm{1}_{\C_p[G]}
 \]
 in $V(\C_p[G])$. Taking \cite[Remark 4]{MR1884523} into account, 
 we see that the 
 class of the pair $([C_{L,S}(r)],\vartheta_{p,j})$ in $\pi_0(V(\Z_p[G], \C_p))$
 maps to $-\Omega_{r,S}^j$ under the isomorphism \eqref{eqn:Picard-K0}, 
 whereas $([C_{L,S}(r)],\vartheta_{p,j}')$
 corresponds to $j_{\ast} (R\Omega(\Q(r)_L, \Z[G]))$.
 Unwinding the definitions of $\vartheta_{p,j}$ and $\vartheta_{p,j}'$ one sees that 
 both isomorphisms almost coincide. The only difference rests on the following. 
 
 Let $\Lambda$ be a noetherian ring
 and let $\phi: P \rightarrow P$ be an automorphism of a finitely generated projective
 $\Lambda$-module $P$. Consider the complex $C: P \stackrel{\phi}{\rightarrow} P$,
 where $P$ is placed in degree $0$ and $1$. Then there a two isomorphisms
 $[C] \simeq \mathbbm{1}_{\Lambda}$ induced by $\tau_{[P]}$ and the acyclicity of $C$,
 respectively. Now for every finite place $v \in S$, there appears such an acyclic complex of $\Q_p[G_w]$-modules
 in the construction of $R\Omega(\Q(r)_L, \Z[G])$.
 Namely, if $v \nmid p$ this is the complex $R\Gamma_f(L_w, \Q_p(r))$
 which is canonically quasi-isomorphic to
 \[\xymatrix{
  \Q_p[\overline{G_w}] \ar[rr]^{1 - \phi_w N(v)^{-r}} & & \Q_p[\overline{G_w}]
 }\]
 with terms in degree $0$ and $1$ (see \cite[(19)]{MR1884523}). 
 If $v$ divides $p$, this complex appears as the rightmost
 complex in \cite[(22)]{MR1884523} and is given by
 \[\xymatrix{
  D_{cris}^{L_w}(\Q_p(r)) \ar[rr]^{1 - \phi_{cris}} & & 
  D_{cris}^{L_w}(\Q_p(r)),
 }\]
 where $D_{cris}^{L_w}(\Q_p(r)) := H^0(L_w, B_{cris} \otimes_{\Q_p} \Q_p(r))$
 naturally identifies with the maximal unramified subextension of $L_w$
 and $\phi_{cris}$ denotes the Frobenius on the crystalline 
 period ring $B_{cris}$.
 Burns and Flach choose the isomorphisms induced by the corresponding
 $\tau$'s, whereas we have implicitly used the acyclicity of these complexes.
 For each such $v$ this gives rise to an Euler factor $\epsilon_v(r)$
 (for more details we refer the reader to \cite[\S 2]{MR1657186}; though the authors
 consider a slightly different situation, the arguments naturally carry over
 to the case at hand).
 This discussion gives an equality
 \[
	 j_{\ast}\left(R\Omega(\Q(r)_L, \Z[G])\right) = - \Omega_{r,S}^j + 
		 j_{\ast}\left(\hat\partial\left(\prod_{v \in S} \varepsilon_v(r)\right)\right).
 \]
 Thus $T\Omega(\Q(r)_L, \Z[G])_p$ and $T\Omega(L/K,r)_p$ have the same
 image under the injective map
 $K_0(\Z_p[G], \Q_p) \rightarrow K_0(\Z_p[G], \C_p)$.
\end{proof}

\section{Annihilating wild kernels} \label{sec:wild-kernels}

\subsection{Generalised adjoint matrices} 
Let $G$ be a finite group and let $p$ be a prime.
Let $\mathfrak{M}_p(G)$ be a maximal $\Z_p$-order such that 
$\Z_p[G] \subseteq \mathfrak{M}_p(G) \subseteq \Q_p[G]$.
Let $e_1, \dots, e_t$ be the central primitive idempotents of $\Q_p[G]$.
Then each Wedderburn component $\Q_p[G] e_i$ is isomorphic to an algebra of
$m_i \times m_i$ matrices over a skewfield $D_i$ and $F_i := \zeta(D_i)$
is a finite field extension of $\Q_p$. We denote the Schur index of $D_i$
by $s_i$ so that $[D_i : F_i] = s_i^2$ and put $n_i := m_i \cdot s_i$.
We let $\mathcal O_i$ be the ring of intgers in $F_i$.

Choose $n \in \N$ and let $H \in M_{n \times n}(\mathfrak{M}_p[G])$.
Then we may decompose $H$ into
\[
H = \sum_{i=1}^{t} H_{i} \in M_{n \times n}(\mathfrak{M}_p(G)) = \bigoplus_{i=1}^{t}  M_{n \times n}(\mathfrak{M}_p(G) e_{i}),
\]
where $H_{i} := He_{i}$.
The reduced characteristic polynomial $f_{i}(X) = \sum_{j=0}^{n_{i} n} \alpha_{ij}X^{j}$ of $H_{i}$
has coefficients in $\mathcal O_i$.
Moreover, the constant term $\alpha_{i0}$ is equal to $\Nrd_{\Q_p[G]}(H_{i}) \cdot (-1)^{n_{i} n}$.
We put
\[
H_{i}^{\ast} := (-1)^{n_{i} n +1} \cdot \sum_{j=1}^{n_{i} n} \alpha_{ij}H_{i}^{j-1}, \quad H^{\ast} := \sum_{i=1}^{t} H_{i}^{\ast}.
\]
Note that this definition of $H^{\ast}$ differs slightly from the definition in \cite[\S 4]{MR2609173},
but follows the conventions in \cite{MR3092262}.
Let $ 1_{n \times n}$ denote the $n \times n$ identity matrix.

\begin{lemma}\label{lem:ast}
We have $H^{\ast} \in M_{n\times n} (\mathfrak{M}_p(G))$ and $H^{\ast} H = H H^{\ast} = \Nrd_{\Q_p[G]}(H) \cdot 1_{n \times n}$.
\end{lemma}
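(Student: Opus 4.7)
The plan is to reduce the statement to the individual Wedderburn components and then apply the Cayley--Hamilton theorem for the reduced characteristic polynomial. Since both sides of every claim in the lemma split across the central primitive idempotents $e_1,\dots,e_t$, and since $H = \sum_i H_i$ with $H_i = H e_i$, it suffices to show that for each $i$ one has $H_i^{\ast} \in M_{n \times n}(\mathfrak{M}_p(G) e_i)$ and
\[
H_i^{\ast} H_i \;=\; H_i H_i^{\ast} \;=\; \Nrd_{\Q_p[G]}(H_i) \cdot 1_{n \times n}.
\]

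For the identity, I would view $H_i$ as an element of $M_{n \times n}(\Q_p[G] e_i) \cong M_{n m_i}(D_i)$, which is a central simple $F_i$-algebra; the reduced characteristic polynomial $f_i(X) = \sum_{j=0}^{n_i n} \alpha_{ij} X^j$ is defined via any splitting field and, as noted in the excerpt, has coefficients in $\mathcal{O}_i$ with constant term $\alpha_{i0} = (-1)^{n_i n} \Nrd_{\Q_p[G]}(H_i)$. By Cayley--Hamilton (applied after extending scalars to an algebraic closure of $F_i$ and descending the identity $f_i(H_i) = 0$ back to $M_{n \times n}(\Q_p[G] e_i)$), one obtains
\[
-\alpha_{i0} \cdot 1_{n \times n} \;=\; \sum_{j=1}^{n_i n} \alpha_{ij} H_i^{j} \;=\; H_i \cdot \sum_{j=1}^{n_i n} \alpha_{ij} H_i^{j-1}.
\]
Multiplying through by $(-1)^{n_i n + 1}$ and using the formula for $\alpha_{i0}$ gives $H_i H_i^{\ast} = \Nrd_{\Q_p[G]}(H_i) \cdot 1_{n \times n}$; commutativity $H_i H_i^{\ast} = H_i^{\ast} H_i$ is automatic since $H_i^{\ast}$ is a polynomial in $H_i$ with central coefficients.

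For the integrality assertion, I would use that $\mathfrak{M}_p(G)$ is maximal, so it decomposes as $\mathfrak{M}_p(G) = \bigoplus_i \mathfrak{M}_p(G) e_i$ with each $\mathfrak{M}_p(G) e_i$ a maximal $\Z_p$-order in $\Q_p[G] e_i$; its center is then the integral closure of $\Z_p$ in $F_i$, namely $\mathcal{O}_i$. Since $H \in M_{n \times n}(\mathfrak{M}_p(G))$ forces $H_i \in M_{n \times n}(\mathfrak{M}_p(G) e_i)$, and since $\alpha_{ij} \in \mathcal{O}_i \subseteq \mathfrak{M}_p(G) e_i$ acts centrally, every term $\alpha_{ij} H_i^{j-1}$ lies in $M_{n \times n}(\mathfrak{M}_p(G) e_i)$; hence so does $H_i^{\ast}$, and summing over $i$ yields $H^{\ast} \in M_{n \times n}(\mathfrak{M}_p(G))$.

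There is no real obstacle here: the content of the lemma is essentially the Cayley--Hamilton theorem together with the observation that maximal orders respect the Wedderburn decomposition. The only point that requires mild care is making sure that Cayley--Hamilton is applied to the \emph{reduced} characteristic polynomial (which makes sense precisely because $M_{n \times n}(\Q_p[G] e_i)$ is central simple over $F_i$), and that the integrality of the $\alpha_{ij}$---already recorded in the excerpt---is used to place $H_i^{\ast}$ back into the maximal order.
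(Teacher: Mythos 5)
Your proposal is correct and follows essentially the same route as the paper: the identity is exactly the paper's observation that $f_i(H_i)=0$ yields $H_i^{\ast}H_i = H_iH_i^{\ast} = (-1)^{n_in+1}(-\alpha_{i0}) = \Nrd_{\Q_p[G]}(H_i)$ componentwise, and the integrality of $H^{\ast}$ comes from $\alpha_{ij}\in\mathcal{O}_i$ together with the fact that the maximal order respects the Wedderburn decomposition. Your write-up just makes explicit (via Cayley--Hamilton over a splitting field and the decomposition $\mathfrak{M}_p(G)=\bigoplus_i\mathfrak{M}_p(G)e_i$) what the paper compresses into ``clear by the above considerations.''
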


\begin{proof}
The first assertion is clear by the above considerations.
Since $f_{i}(H_{i}) = 0$, we find that
\[
H_{i}^{\ast} \cdot H_{i} = H_{i} \cdot H_{i}^{\ast}  = (-1)^{n_{i} n+1} (-\alpha_{i0}) = \Nrd_{\Q_p[G]}(H_{i}),
\]
as desired (see also \cite[Lemma 3.4]{MR3092262}). 
\end{proof}

\subsection{Denominator ideals}\label{subsec:denom-central-conductors}
We define
\begin{eqnarray*}
    \mathcal{H}_p(G) & := & \{ x \in \zeta(\Z_p[G]) \mid xH^{\ast} \in M_{n \times n}(\Z_p[G]) \, \forall H \in M_{n \times n}(\Z_p[G]) \, \forall n \in \N \},\\
    \mathcal{I}_p(G) & := & \langle \Nrd_{\Q_p[G]}(H) \mid H \in  M_{n \times n}(\Z_p[G]), \,  n \in \N\rangle_{\zeta(\Z_p[G])}.
\end{eqnarray*}
Since $x \cdot \Nrd_{\Q_p[G]}(H) = xH^{\ast}H \in \zeta(\Z_p[G])$ by Lemma \ref{lem:ast}, in particular we have
\begin{equation}\label{eq:denom-ideal}
\mathcal{H}_p(G) \cdot \mathcal{I}_p(G) = \mathcal{H}_p(G) \subseteq \zeta(\Z_p[G]).
\end{equation}
Hence $\mathcal{H}_p(G)$ is an ideal in the commutative $\Z_p$-order
$\mathcal{I}_p(G)$. We will refer to $\mathcal{H}_p(G)$ as the 
\emph{denominator ideal} of the group ring $\Z_p[G]$.
The following result determines the primes $p$
for which the denominator ideal $\mathcal{H}_{p}(G)$ is best possible.

\begin{prop}\label{prop:best-denominators}
We have $\mathcal{H}_{p}(G) = \zeta(\Z_{p}[G])$ if and only if $p$ does not divide the order of the commutator subgroup $G'$ of $G$. Furthermore, when this is the case we have that $\mathcal{I}_{p}(G) = \zeta(\Z_{p}[G])$.
\end{prop}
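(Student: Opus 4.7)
The plan is to prove the ``furthermore'' first, then the two directions of the main equivalence.

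For the ``furthermore'', note that $\Nrd_{\Q_p[G]}(1) = 1$, which gives $\zeta(\Z_p[G]) \subseteq \mathcal{I}_p(G)$. Conversely, if $\mathcal{H}_p(G) = \zeta(\Z_p[G])$ then $1 \in \mathcal{H}_p(G)$, so for every $H \in M_{n\times n}(\Z_p[G])$ one has $H^{*} \in M_{n\times n}(\Z_p[G])$ and Lemma \ref{lem:ast} yields $HH^{*} = \Nrd_{\Q_p[G]}(H) \cdot 1_{n \times n} \in M_{n \times n}(\Z_p[G])$. Reading off a diagonal entry gives $\Nrd_{\Q_p[G]}(H) \in \Z_p[G] \cap \zeta(\Q_p[G]) = \zeta(\Z_p[G])$, so $\mathcal{I}_p(G) \subseteq \zeta(\Z_p[G])$ and equality holds.

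For the contrapositive of the equivalence, suppose $p \mid |G'|$. Then $G$ has an element $\sigma \in G'$ of order $p$, and any $\C$-irreducible character $\chi$ of $G$ with $\chi(\sigma) \neq \chi(1)$ must be non-linear (since linear characters are trivial on $G'$). The central primitive idempotent $e_i \in \zeta(\Q_p[G])$ associated to the Galois orbit of such a $\chi$ has $p$ in the denominator when $v_p(|G|) > v_p(\chi(1))$, which can be arranged by choosing $\chi$ appropriately (for instance a faithful irreducible character of a non-abelian $p$-subquotient of $G$). I plan to exhibit $H \in M_{n \times n}(\Z_p[G])$---most naturally built from $\sigma$ so that the $\chi$-component acts nontrivially---for which the $p$-denominator of $e_i$ survives in one of the reduced characteristic polynomial coefficients $\beta_j := \sum_i \alpha_{ij} e_i$. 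This will give $H^{*} \notin M_{n \times n}(\Z_p[G])$ and hence $\mathcal{H}_p(G) \subsetneq \zeta(\Z_p[G])$.

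For the converse direction, assume $p \nmid |G'|$. I must show $H^{*} \in M_{n\times n}(\Z_p[G])$ for every $H \in M_{n\times n}(\Z_p[G])$, equivalently that each $\beta_j = \sum_i \alpha_{ij} e_i$ lies in $\zeta(\Z_p[G])$. The strategy is Clifford-theoretic: since $|G'|$ is a unit in $\Z_p$, $\Z_p[G']$ is a maximal $\Z_p$-order and decomposes as a product of matrix rings over unramified local extensions of $\Z_p$, permuted by the $G/G'$-action. Averaging the central primitive idempotents of $\Z_p[G']$ over $G$-orbits involves coefficients whose denominators divide $|G'|$---hence are $p$-units under the hypothesis---so this produces an orthogonal decomposition of $1 \in \Z_p[G]$ into $p$-integral central idempotents $f_k$. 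On each block $\Z_p[G] f_k$, the ring is a crossed product of a subgroup of $G^{\ab}$ over a local $\Z_p$-order, and $H f_k$ has a reduced characteristic polynomial that corresponds (via a Morita equivalence) to the ordinary matrix characteristic polynomial over a commutative $\Z_p$-order; its coefficients are thus polynomial expressions in the entries and land in $\Z_p[G]$. Summing across blocks yields $\beta_j \in \zeta(\Z_p[G])$.

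The main obstacle is the converse direction. Making the Clifford/Morita reduction rigorous requires verifying both that the orbit-averaging idempotents $f_k$ are $p$-integral (which is precisely the point where $p \nmid |G'|$ enters decisively) and that no Schur index introduces an uncancelled $p$-adic denominator in the final reduced characteristic polynomial. The forward direction, while requiring an explicit construction, should ultimately reduce to a local calculation on a small non-abelian $p$-subquotient of $G$.
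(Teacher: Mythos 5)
Your proof of the ``furthermore'' statement is correct and is essentially the argument the paper has in mind: the paper simply invokes \eqref{eq:denom-ideal}, which packages the same computation (that $1 \in \mathcal{H}_p(G)$ forces $\Nrd_{\Q_p[G]}(H) = H H^{\ast}$, read off a diagonal entry, to lie in $\Z_p[G] \cap \zeta(\Q_p[G]) = \zeta(\Z_p[G])$). So this part is fine.

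For the main equivalence, however, the paper gives no internal argument at all: it cites \cite[Proposition~4.8]{MR3092262} and stops. Your proposal is therefore an attempt to supply a proof from scratch, and as written it is not yet a proof but a programme. You say so yourself (``I plan to exhibit $H$\dots'', ``Making the Clifford/Morita reduction rigorous requires verifying\dots''), and the two places you flag are exactly the places where the argument is not yet in hand. Concretely: in the direction $p \mid |G'| \Rightarrow \mathcal{H}_p(G) \subsetneq \zeta(\Z_p[G])$, it is not enough to observe that some primitive central idempotent $e_\chi$ has $p$ in its denominator; the generalised adjoint $H^{\ast} = \sum_i H_i^{\ast}$ is built from the reduced characteristic polynomial coefficients $\alpha_{ij}$ and always lands in the maximal order $\mathfrak{M}_p(G)$ by Lemma~\ref{lem:ast}, so what must be shown is that for a suitable $H$ the element $\sum_i \alpha_{ij} e_i$ fails to lie in $\Z_p[G]$; this is a genuinely different (and finer) statement than integrality of a single $e_\chi$, and no witnessing $H$ is produced. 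In the direction $p \nmid |G'| \Rightarrow \mathcal{H}_p(G) = \zeta(\Z_p[G])$, your orbit-averaging of the central primitive idempotents of $\Z_p[G']$ to get $p$-integral central idempotents $f_k \in \zeta(\Z_p[G])$ is sound (these are just orbit sums, no averaging denominators arise), but the reduction of each block $\Z_p[G]f_k$ to a setting where reduced characteristic polynomials visibly have coefficients in $\zeta(\Z_p[G]f_k)$ requires exactly the control on Schur indices and on the crossed-product structure that you flag as unverified. The reference the paper cites carries out this reduction carefully (the key structural input is that under the hypothesis $p \nmid |G'|$ the order $\Z_p[G]$ is a so-called ``nice'' order, a product of matrix rings over local rings for which one can reduce to the commutative Cayley--Hamilton argument); until you have that step, the converse direction is open. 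So: ``furthermore'' is correct and matches the paper; the main equivalence is a reasonable plan with the two gaps you yourself identify, and both must be closed before this counts as a proof.
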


\begin{proof}
The first assertion is a special case of \cite[Proposition 4.8]{MR3092262}.
The second assertion follows from \eqref{eq:denom-ideal}.
\end{proof}

\subsection{A canonical fractional Galois ideal} \label{sec:frac-Galois}

Let $L/K$ be a finite Galois extension of number fields with Galois group $G$
and let $r>1$ be an integer.
Let $p\not=2$ be a prime and let $S$ be a finite set of places of $K$ 
containing $S_{\ram} \cup S_{\infty} \cup S_p$. 
Recall the notation of \S \ref{subsec:trivialization}.
As $p$ is odd, the 
$\Z_p[G]$-module
\[
 Y_r := \left( H_{1-r}^+(L) \oplus H_{-r}^+(L)\right) \otimes \Z_p
\]
is projective. We also observe that $P^1(\mathcal{O}_{L,S}, \Z_p(r))_{\tf}$ 
does not depend on $S$ by Lemma \ref{lem:local-cohomology}
and the fact that $K_{2r-1}(\mathcal O_w; \Z_p)$ is finite for each 
$w \not\in S_p(L)$.
We let 
\begin{eqnarray*}
 E (\alpha_r) & := & \left\{\gamma \in \End_{\Q_p[G]}(Y_r \otimes_{\Z_p} \Q_p)
  \mid \exp_r^{BK} \alpha_r^{-1} \gamma(Y_r) \subseteq P^1(\mathcal{O}_{L,S}, \Z_p(r))_{\tf} \right\}, \\
 \mathcal E (\alpha_r) & := & \langle \Nrd_{\Q_p[G]}(\gamma) \mid \gamma \in E(\alpha_r) \rangle_{\zeta(\Z_p[G])}
  \subseteq \zeta(\Q_p[G]).
\end{eqnarray*}
Now suppose that Schneider's conjecture \ref{conj:Schneider} holds.
Then we have the short exact sequence \eqref{eqn:splitting-sequence}
and we may choose a $\Q_p[G]$-equivariant splitting $\sigma_r$ of this sequence:
\[
 \sigma_r: P^1(\mathcal{O}_{L,S}, \Q_p(r)) \stackrel{\simeq}{\longrightarrow} 
  H^1_{\et}(\mathcal{O}_{L,S}, \Q_p(r)) \oplus H^2_{c}(\mathcal{O}_{L,S}, \Q_p(r)).
\]
We let 
\[
\sigma_r^1: P^1(\mathcal{O}_{L,S}, \Q_p(r)) \longrightarrow H^1_{\et}(\mathcal{O}_{L,S}, \Q_p(r)) 
\]
be the composite map of $\sigma_r$ and the projection onto the first component.
We put
\begin{eqnarray*}
 F (\phi_{1-r}, \sigma_r) & := & \{\delta \in \End_{\Q_p[G]}(H_{1-r}^+(L) \otimes \Q_p)
  \mid \\ & & \delta \phi_{1-r}^{-1} (ch_{r,1}^{(p)})^{-1} \sigma_r^1\left(P^1(\mathcal{O}_{L,S}, \Z_p(r))_{\tf}\right)
  \subseteq H_{1-r}^+(L) \otimes \Z_p \}, \\
 \mathcal F (\phi_{1-r}) & := & \langle \Nrd_{\Q_p[G]}(\delta) \mid \delta \in F(\phi_{1-r}, \sigma_r) \mbox{ for some choice of } 
 \sigma_r \rangle_{\zeta(\Z_p[G])}
  \subseteq \zeta(\Q_p[G]).
\end{eqnarray*}
Recall that $\phi_r = (\phi_{1-r} \oplus \id_{H_{-r}^+(L)}) \circ \alpha_r$.

\begin{prop}
Let $L/K$ be a finite Galois extension of number fields with Galois group $G$
and let $r>1$ be an integer.
Let $p\not=2$ be a prime and let $S$ be a finite set of places of $K$ 
containing $S_{\ram} \cup S_{\infty} \cup S_p$.
 Suppose that Schneider's Conjecture \ref{conj:Schneider} and 
 Gross' Conjecture (Conjecture \ref{conj:Gross-re}) both hold.
 Then with the notation above
 \[
  \mathcal J_r^S = \mathcal J_r^S(L/K,p) := \mathcal E(\alpha_r) \mathcal F(\phi_{1-r}) \cdot \left((A_{\phi_r}^{S})^{-1}\right)^{\sharp}
    \subseteq \zeta(\Q_p[G])
 \]
 only depends upon $L/K$, $p$, $r$ and $S$. We call $\mathcal J_r^S$
 the \bf{canonical fractional Galois ideal}.
\end{prop}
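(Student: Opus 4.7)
The plan is to check that $\mathcal J_r^S$ is invariant under the only two auxiliary choices still to be pinned down, namely the isomorphism $\alpha_r$ of \eqref{eqn:alpha} and the isomorphism $\phi_{1-r}$ of \eqref{eqn:phi_1-r}. Independence from the splitting $\sigma_r$ is automatic because $\mathcal F(\phi_{1-r})$ is defined by quantifying over all admissible $\sigma_r$. Any two valid pairs $(\alpha_r, \phi_{1-r})$ and $(\alpha_r', \phi_{1-r}')$ can be deformed one entry at a time, so it suffices to treat each change separately.

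For the change $\alpha_r \mapsto \alpha_r' = v \circ \alpha_r$ with $v \in \Aut_{\Q[G]}(Y_r \otimes \Q)$, unwinding the condition defining $E(\alpha_r)$ gives $E(\alpha_r') = v \cdot E(\alpha_r)$, hence
\[
\mathcal E(\alpha_r') = \Nrd_{\Q_p[G]}(v) \cdot \mathcal E(\alpha_r)
\]
as fractional ideals of $\zeta(\Z_p[G])$. On the other hand, the computation carried out in the proof of Proposition \ref{prop:independent} yields
\[
\bigl(A_{\phi_r}^S (A_{\phi_r'}^S)^{-1}\bigr)^{\sharp} = \Nrd_{\Q[G]}\bigl((\alpha_r')^{-1}\alpha_r\bigr) = \Nrd_{\Q[G]}(v)^{-1}.
\]
Applying $\sharp$ (so that $A(A')^{-1} = \Nrd(v)^{-\sharp}$), inverting, and applying $\sharp$ once more, and using that $\sharp$ is a $\C$-linear anti-involution of $\Q_p[G]$ and that $\zeta(\Q_p[G])$ is commutative, one obtains
\[
\bigl((A_{\phi_r'}^S)^{-1}\bigr)^{\sharp} = \Nrd_{\Q_p[G]}(v)^{-1} \cdot \bigl((A_{\phi_r}^S)^{-1}\bigr)^{\sharp}.
\]
Multiplying the two displayed identities, the factors $\Nrd(v)^{\pm 1}$ cancel and $\mathcal J_r^S$ is unchanged.

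The change $\phi_{1-r} \mapsto \phi_{1-r}' = \phi_{1-r} \circ w$ with $w \in \Aut_{\Q[G]}(H_{1-r}^+(L) \otimes \Q)$ is treated in exactly parallel fashion. For each splitting $\sigma_r$ of \eqref{eqn:splitting-sequence} the defining condition of $F(\phi_{1-r}, \sigma_r)$ gives $F(\phi_{1-r}', \sigma_r) = F(\phi_{1-r}, \sigma_r) \cdot w$, so $\mathcal F(\phi_{1-r}') = \Nrd_{\Q_p[G]}(w) \cdot \mathcal F(\phi_{1-r})$. Since $(\phi_r')^{-1}\phi_r = \alpha_r^{-1}(w \oplus \id)^{-1}\alpha_r$ has reduced norm $\Nrd(w)^{-1}$, the same $\sharp$-manipulation as above produces $((A_{\phi_r'}^S)^{-1})^{\sharp} = \Nrd(w)^{-1} \cdot ((A_{\phi_r}^S)^{-1})^{\sharp}$, and the factors again cancel.

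The only delicate point is the bookkeeping with the anti-involution $\sharp$: although $\Nrd(v)$ and $\Nrd(w)$ are central, they are not in general fixed by $\sharp$, so one must apply $\sharp$ twice in the correct order (using $\sharp \circ \sharp = \id$ and $(xy)^{\sharp} = y^{\sharp} x^{\sharp}$) to extract the required cancelling factor. This is precisely the maneuver already performed implicitly in the proof of Proposition \ref{prop:independent}, here transposed from the additive setting of $K_0(\Z_p[G], \Q_p)$ to the multiplicative setting of fractional ideals in $\zeta(\Q_p[G])$.
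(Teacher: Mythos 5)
Your proposal is correct and follows essentially the same route as the paper: you show that replacing $\alpha_r$ by $v\circ\alpha_r$ rescales $\mathcal E$ by $\Nrd_{\Q_p[G]}(v)$ while, via \eqref{eqn:A-difference}, the factor $\left((A_{\phi_r}^{S})^{-1}\right)^{\sharp}$ rescales by $\Nrd_{\Q_p[G]}(v)^{-1}$, and you treat $\phi_{1-r}$ analogously through $\mathcal F$. The only difference is that you write out explicitly the $\phi_{1-r}$ case and the $\sharp$-bookkeeping, which the paper dismisses with ``the argument for $\phi_{1-r}$ is similar.''
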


\begin{proof}
 Suppose that $\alpha_r'$ is a second choice of $\Q[G]$-isomorphism
 as in \eqref{eqn:alpha}. Let $\phi_r' := (\phi_{1-r} \oplus \id_{H_{-r}^+(L)}) \circ \alpha_r'$.
 Then we have a bijection
 \begin{eqnarray*}
  E(\alpha_r) & \longrightarrow & E(\alpha_r')\\
  \gamma & \mapsto & \alpha_r' \alpha_r^{-1} \gamma
 \end{eqnarray*}
 which implies $\mathcal{E}(\alpha_r) = 
 \Nrd_{\Q_p[G]}(\alpha_r (\alpha_r')^{-1}) \mathcal E(\alpha_r')$.
 Now \eqref{eqn:A-difference} implies that $\mathcal J_r^S$ does not depend on
 the choice of $\alpha_r$. The argument for $\phi_{1-r}$ is similar.
\end{proof}

\begin{example} \label{ex:barrett}
 Suppose that $L/K$ is an extension of totally real fields and that $r$ is even.
 Then the conjectures of Schneider and Gross both hold by Theorem \ref{thm:Schneider}
 and Theorem \ref{thm:Gross}, respectively. We have that
 $H_{1-r}^+(L)$ vanishes by \eqref{eqn:rank-Betti} and thus 
 $\mathcal F (\phi_{1-r}) = \zeta(\Z_p[G])$. Moreover, we have
 $Y_r = H_{-r}^+(L) \otimes \Z_p$ and $\alpha_r = \phi_r$. We conclude that
 we have 
 \[
  \mathcal J_r^S = \mathcal E(\phi_r)  \cdot \left((A_{\phi_r}^{S})^{-1}\right)^{\sharp}
    \subseteq \zeta(\Q_p[G])
 \]
  unconditionally. We also have
 \[
  \left((A_{\phi_r}^{S})^{-1}\right)^{\sharp} = L_S^{\ast}(r) \cdot \Nrd_{\C[G]}(\iota_r \phi_r^{-1}).
 \]
 Put $d := [K:\Q]$ and fix an isomorphism $j: \C \simeq \C_p$. We observe that $\iota_r = 
 (2\pi i)^{-r} \mu_r \circ \iota_0$ and that $\mu_r (H_0(L) \otimes \Z_p) = Y_r$.
 We let
 \[
  E' := \left\{\gamma' \in \End_{\Q_p[G]}(H_0(L) \otimes \Q_p) \mid
  \exp_r^{BK} \iota_0^{-1} \gamma'(H_0(L) \otimes \Z_p) 
    \subseteq P^1(\mathcal{O}_{L,S}, \Z_p(r))_{\tf} \right\}
 \]
 and obtain (substitute $\gamma'$ by $\mu_r^{-1}\iota_r \phi_r^{-1} \gamma \mu_r$)
 \[
  \mathcal J_r^S = \Nrd_{\C_p[G]}(j(2\pi i)^{-r})^{d} \cdot \langle \Nrd_{\Q_p[G]}(\gamma') \mid
    \gamma' \in E' \rangle_{\zeta(\Z_p[G])} \cdot j(L_S^{\ast}(r)).
 \]
 Now suppose in addition that $L/K$ is abelian.
 The inverse of the Bloch--Kato exponential map and 
 $\iota_0 \otimes_j \C_p$ induce a map 
 \[
  P^1(\mathcal{O}_{L,S}, \Q_p(r)) \longrightarrow H_0(L) \otimes \C_p
 \]
 which in turn induces a regulator map
 \[
  \mathfrak s_{S,r}^{(j)}: \bigwedge_{\Z_p[G]}^d P^1(\mathcal{O}_{L,S}, \Z_p(r)) 
  \longrightarrow \bigwedge_{\C_p[G]}^d (H_0(L) \otimes \C_p) \simeq \C_p[G].
 \]
 It is then not hard to show that
 \begin{eqnarray*}
  \mathcal J_r^S & = & j(2\pi i)^{-rd} \cdot \mathrm{Im}(\mathfrak s_{S,r}^{(j)}) \cdot j(L_S^{\ast}(r))\\
  & = & j\left(\frac{i}{\pi}\right)^{rd} \cdot \mathrm{Im}(\mathfrak s_{S,r}^{(j)}) \cdot j(L_S^{\ast}(r)),
 \end{eqnarray*}
 where the second equality holds, since $p$ is odd and $r$ is even. This shows that
 in this case the canonical fractional Galois ideal $\mathcal J_r^S$
 coincides with the `Higher Solomon ideal' of Barrett \cite[Definition 5.3.1]{barrett}.
 When $L/K$ is a CM-extension and $r$ is odd, similar observations hold on minus parts.
\end{example}

\begin{example} \label{ex:tot-real-odd}
	Let $L/K$ be a Galois extension of totally real fields,
	but now we assume that $r$ is odd. Then \eqref{eqn:rank-Betti} implies
	that $H^+_{-r}(L)$ vanishes and that we have 
	$Y_r = H_{1-r}^+(L) \otimes \Z_{p}$.
	We assume that Schneider's conjecture holds so that the natural
	localization maps induce an isomorphism of $\Q_p[G]$-modules
	\[
		H^1_{\et}(\mathcal{O}_{L,S}, \Q_p(r)) \stackrel{\simeq}{\longrightarrow}
		P^1(\mathcal{O}_{L,S}, \Q_p(r))
	\]
	by Propositions \ref{prop:Schneider-equivalence} and
	\ref{prop:cs-cohomology}(v). We let $\sigma_r = \sigma_r^1$ be the
	inverse of this isomorphism. We set $\tau_r := 
	(ch_{r,1}^{(p)})^{-1} \sigma_r^1 \exp_r^{BK}$, which is an
	isomorphism  
	$L \otimes_{\Q} \Q_p \simeq K_{2r-1}(\mathcal{O}_{L,S}) \otimes \Q_{p}$,
	and define
	\begin{eqnarray*}
		G(\phi_{1-r}, \alpha_r) & := & \{\gamma \in \End_{\Q_p[G]}(Y_r \otimes_{\Z_p} \Q_p)
		\mid  \phi_{1-r}^{-1} \tau_r \alpha_r^{-1} \gamma\left(Y_r\right)
		\subseteq Y_r \}, \\
		\mathcal G (\phi_{1-r}, \alpha_r) & := & \langle \Nrd_{\Q_p[G]}(\gamma) \mid \gamma \in G(\phi_{1-r}, \alpha_r)\rangle_{\zeta(\Z_p[G])}
		\subseteq \zeta(\Q_p[G])\\
		& = & \mathcal{E}(\alpha_r) \cdot \mathcal F(\phi_{1-r}),
	\end{eqnarray*}
	where the last equality follows easily from the definitions.
	Clearly, the set $G(\phi_{1-r}, \alpha_r)$ contains
	$\gamma_r := \alpha_r \tau_r^{-1} \phi_{1-r}$
	 and hence 
	$\Nrd_{\Q_p[G]}(\gamma_r) \in \mathcal G (\phi_{1-r}, \alpha_r)$.
	Conversely, for every $\gamma \in G(\phi_{1-r}, \alpha_r)$ we have
	that $\Nrd_{\Q_p[G]}(\gamma_r^{-1} \gamma) \in \mathcal{I}_p(G)$.
	In other words, we have an equality
	\[
		\mathcal{G}(\phi_{1-r}, \alpha_r) \cdot \mathcal{I}_p(G) =
		\Nrd_{\Q_p[G]}(\gamma_r) \cdot \mathcal{I}_p(G).
	\]
	Define a $\C_p[G]$-automorphism of $H_{1-r}^+(L) \otimes \C_p$
	by $\vartheta_r^{(j)} := \rho_r \tau_r \iota_r^{-1}$, where we
	extend scalars via the isomorphism $j:\C \simeq \C_p$ 
	on the right hand side. Noting that
	$\mathcal{H}_p(G)$ is an ideal in $\mathcal{I}_p(G)$, we compute
	\begin{eqnarray*}
	\mathcal{H}_p(G) \cdot \mathcal J_r^S & = & 
	\mathcal{H}_p(G) \cdot \Nrd_{\Q_p[G]}(\gamma_r)
	\cdot \left((A_{\phi_r}^{S})^{-1}\right)^{\sharp}\\
	& = & \mathcal{H}_p(G) \cdot \Nrd_{\C_p[G]}(\vartheta_r^{(j)})
		\cdot j(L_S^{\ast}(r)).
	\end{eqnarray*}
	If $L/K$ is a CM-extension and $r$ is even, similar observations
	again hold on minus parts.
\end{example}

\subsection{The annihilation conjecture}

Let $L/K$ be a finite Galois extension of number fields with Galois group $G$
and let $r>1$ be an integer.
Let $p\not=2$ be a prime and let $S$ be a finite set of places of $K$ 
containing $S_{\ram} \cup S_{\infty} \cup S_p$.
Suppose that Schneider's Conjecture \ref{conj:Schneider} and 
Gross' Conjecture (Conjecture \ref{conj:Gross-re}) both hold.

\begin{conj} \label{conj:ann-wild-kernel}
 For every $x \in \Ann_{\Z_p[G]}(\Z_p(r-1)_{G_L})$ we have that
 \[
  \Nrd_{\Q_p[G]}(x) \cdot \mathcal{H}_p(G) \cdot \mathcal J_r^S 
  \subseteq \Ann_{\Z_p[G]}(K_{2r-2}^w(\mathcal{O}_{L,S})_p).
 \]
\end{conj}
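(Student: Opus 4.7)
The plan is to deduce Conjecture \ref{conj:ann-wild-kernel} from the leading term conjecture (Conjecture \ref{conj:LTC}). By Theorem \ref{thm:compare-ETNC}, this leading term conjecture is essentially equivalent to the $p$-part of the ETNC for the pair $(\Q(r)_L, \Z[G])$, so the strategy simultaneously establishes the annihilation conjecture in all cases where the ETNC is known. Under Schneider's conjecture, Proposition \ref{prop:conj-perfect} shows that $C_{L,S}(r)$ is perfect and acyclic outside degrees $2$ and $3$. A direct rank count, using $d_r + d_{r+1} = [L:\Q]$ together with the finiteness of $\Z_p(r-1)_{G_L}$, shows that $H^2(C_{L,S}(r)) \otimes_{\Z_p} \Q_p$ and $H^3(C_{L,S}(r)) \otimes_{\Z_p} \Q_p$ have equal dimension as $\Q_p[G]$-modules. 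After adding a split acyclic complex, one may therefore represent $C_{L,S}(r)$ by a two-term complex $P^2 \stackrel{H}{\longrightarrow} P^3$ of free $\Z_p[G]$-modules of the same rank $n$, with differential $H \in M_{n \times n}(\Z_p[G])$.

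With this representative the refined Euler characteristic $\Omega_{\psi_r, S}$ becomes the class $[P^3, \phi_t, P^2]$, where $\phi_t \colon P^3 \otimes_{\Z_p} \Q_p \to P^2 \otimes_{\Z_p} \Q_p$ is the $\Q_p[G]$-isomorphism assembled, via the splittings required in the Euler characteristic construction, from the constituent maps of the trivialization of \S \ref{subsec:trivialization}: the Bloch--Kato exponential, the splitting $\sigma_r$, the inverse Chern class map $(ch_{r,1}^{(p)})^{-1}$, and the $\Q[G]$-isomorphisms $\alpha_r$ and $\phi_{1-r}$. Conjecture \ref{conj:LTC}(iii) thus yields the identity
\[
 \Nrd_{\Q_p[G]}(\phi_t) \equiv \bigl((A_{\phi_r}^S)^{-1}\bigr)^{\sharp} \pmod{\Nrd_{\Q_p[G]}\bigl((\Z_p[G])^{\times}\bigr)}.
\]
Factoring $\phi_t$ and tracking the denominators of its rational constituents---which is exactly the data measured by the sets $E(\alpha_r)$ and $F(\phi_{1-r}, \sigma_r)$ of \S \ref{sec:frac-Galois}---shows that $\Nrd_{\Q_p[G]}(H)$ belongs to $\mathcal E(\alpha_r) \cdot \mathcal F(\phi_{1-r}) \cdot \bigl((A_{\phi_r}^S)^{-1}\bigr)^{\sharp} = \mathcal J_r^S$ modulo $\Nrd_{\Q_p[G]}((\Z_p[G])^{\times})$.

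The final step is the adjoint matrix technique of Lemma \ref{lem:ast}. For each $h \in \mathcal H_p(G)$ one has $hH^{\ast} \in M_{n \times n}(\Z_p[G])$ and $h H^{\ast} H = h \cdot \Nrd_{\Q_p[G]}(H) \cdot 1_n$. Because the summand $Y_r$ of $H^3(C_{L,S}(r)) \simeq \Z_p(r-1)_{G_L} \oplus Y_r$ is projective (as $p$ is odd), one may split it off at the cost of an adjustment absorbed into $\mathcal E(\alpha_r)$, reducing to the case in which $H^3$ is pinned down by $\Z_p(r-1)_{G_L}$. Applied to a lift of any element of $K_{2r-2}^w(\mathcal{O}_{L,S})_p \simeq \Sha^2(\mathcal{O}_{L,S}, \Z_p(r)) \hookrightarrow H^2(C_{L,S}(r))$ (using Propositions \ref{prop:cs-cohomology}(iv) and \ref{prop:conj-perfect}(ii)), multiplication by $\Nrd_{\Q_p[G]}(x)$ for any $x \in \Ann_{\Z_p[G]}(\Z_p(r-1)_{G_L})$ kills the resulting cokernel obstruction and yields the desired inclusion. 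The main obstacle will be the delicate bookkeeping in the middle paragraph: precisely matching the denominators of the rational factors of $\phi_t$ with the generators of $\mathcal E(\alpha_r) \mathcal F(\phi_{1-r})$, and controlling the interaction of the projective summands $Y_r$ and $H^+_{1-r}(L) \otimes \Z_p$ of the cohomology of $C_{L,S}(r)$ with the trivialization, so that only an annihilator of $\Z_p(r-1)_{G_L}$---rather than of the full $H^3(C_{L,S}(r))$---is required on the input side.
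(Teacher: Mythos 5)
Your overall route is the same as the paper's: this statement is a conjecture, and what can actually be proved is the conditional result (Theorem \ref{thm:LTC-implies}) that Conjecture \ref{conj:LTC} implies it. However, the mechanism you propose does not work as described. The central problem is that $C_{L,S}(r)$ does \emph{not} have finite cohomology: $H^2$ and $H^3$ contain the free summands $H_{1-r}^+(L)\otimes\Z_p$ and $Y_r$, so the differential $H$ of any two-term representative fails to be injective after tensoring with $\Q_p$, and $\Nrd_{\Q_p[G]}(H)$ vanishes on the relevant Wedderburn components. Consequently the claim that $\Nrd_{\Q_p[G]}(H)$ lies in $\mathcal J_r^S$ modulo $\Nrd_{\Q_p[G]}((\Z_p[G])^{\times})$ is not meaningful, and the adjoint identity $hH^{\ast}H = h\,\Nrd_{\Q_p[G]}(H)$ of Lemma \ref{lem:ast} gives nothing: it is the quadratic-presentation situation (finite cokernel) that makes this identity produce annihilators. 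The step you defer as ``delicate bookkeeping'' is in fact the heart of the matter: one must first replace the purely rational trivialization by \emph{integral} approximations, i.e.\ choose $\gamma\in E(\alpha_r)$ and $\delta\in F(\phi_{1-r},\sigma_r)$ (these sets are not mere denominator bookkeeping but furnish actual $\Z_p[G]$-morphisms), build from them an injection $\xi_r\colon Y_r\to (H_{1-r}^+(L)\otimes\Z_p)\oplus H^2_c(\mathcal O_{L,S},\Z_p(r))$ compatible with the trivialization, and pass to an auxiliary perfect complex $C'$ whose cohomology \emph{is} finite, with $H^2(C')=\cok(\xi_r)$ and $H^3(C')=H^3_c\oplus\cok(p^n\gamma)$.

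Two further ingredients of the paper's argument are absent from your sketch and cannot be waved away. First, the adjoint/Fitting machinery naturally controls the cohomology in degree $3$ (the cokernel side), while the wild kernel is a quotient of $H^2$ (note also that $\Sha^2\simeq K^w_{2r-2}(\mathcal O_{L,S})_p$ is a \emph{quotient} of $H^2_c$, not a submodule of $H^2(C_{L,S}(r))$ as you write); transferring annihilation from degree $3$ to degree $2$ requires the duality statement of Lemma \ref{lem:4term-sequence}, relating $\Fitt^{\max}_{\Z_p[G]}((H^2)^{\vee})^{\sharp}$ to $\Fitt^{\max}_{\Z_p[G]}(H^3)\cdot\mathcal L$, followed by Theorem \ref{thm:annihilation}. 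Second, the hypothesis $x\in\Ann_{\Z_p[G]}(\Z_p(r-1)_{G_L})$ enters through the fact that $H^3_c\simeq\Z_p(r-1)_{G_L}$ is a \emph{cyclic} $\Z_p[G]$-module, so that $\Nrd_{\Q_p[G]}(x)$ lies in its maximal Fitting invariant; your phrase ``kills the resulting cokernel obstruction'' does not substantiate this, and your plan to ``split off $Y_r$ at a cost absorbed into $\mathcal E(\alpha_r)$'' is exactly the unproved part. So while the strategy (LTC $\Rightarrow$ annihilation, hence ETNC cases give unconditional instances) agrees with the paper, the proof as proposed has genuine gaps at the points where the paper's construction of $\xi_r$, $C'$, Lemma \ref{lem:4term-sequence} and the cyclicity of $H^3_c$ do the real work.
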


\begin{remark} \label{rem:easy-annihilator}
 The $\Z_p[G]$-annihilator of $\Z_p(r-1)_{G_L} \simeq (\Q_p / \Z_p(1-r)^{G_L})^{\vee}$ 
 is generated by the elements 
 $1 - \phi_w N(v)^{1-r}$, where $v$ runs through the finite places of $K$
 with $v \not\in S_{\ram} \cup S_p$ (cf. \cite{MR0460282}).
 Moreover, if $L/K$ is totally real and $r$ is even, then $\Z_p(r-1)_{G_L}$ vanishes.
\end{remark}

\begin{remark}
 If $p$ does not divide the order of the commutator subgroup of $G$,
 then we have $\mathcal{H}_p(G) = \zeta(\Z_p[G])$ by Proposition 
 \ref{prop:best-denominators}.
 In particular, if $G$ is abelian, then Conjecture \ref{conj:ann-wild-kernel}
 simplifies to the assertion
 \[
  \Ann_{\Z_p[G]}(\Z_p(r-1)_{G_L}) \cdot \mathcal J_r^S 
  \subseteq \Ann_{\Z_p[G]}(K_{2r-2}^w(\mathcal{O}_{L,S})_p).
 \]
 Taking Example \ref{ex:barrett} and Remark \ref{rem:easy-annihilator} into
 account, we see that our conjecture is compatible with \cite[Conjecture 5.3.4]{barrett}.
\end{remark}

\begin{remark}
	The author also expects that for every 
	$x \in \Ann_{\Z_p[G]}(\Z_p(r-1)_{G_L})$ we have that
	\begin{equation} \label{eqn:expectation}
		\Nrd_{\Q_p[G]}(x) \cdot \mathcal J_r^S 
			\subseteq \mathcal I_p(G).
	\end{equation}
	Then \eqref{eq:denom-ideal} implies that the left hand side in
	Conjecture \ref{conj:ann-wild-kernel} belongs to $\zeta(\Z_p[G])$.
\end{remark}

\begin{lemma}
 Let $S'$ be a second finite set of places of $K$ such that $S \subseteq S'$.
 \begin{enumerate}
 	\item 
 	If Conjecture \ref{conj:ann-wild-kernel} holds for $S$, then it holds for $S'$ as well.
 	\item
 	If \eqref{eqn:expectation} holds for $S$, then it holds for $S'$ as well.
 \end{enumerate}
\end{lemma}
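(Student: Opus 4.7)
The plan is to reduce by an obvious induction on $|S' \setminus S|$ to the case $S' = S \cup \{v\}$ for a single finite place $v \notin S$. Since $S$ contains $S_{\ram} \cup S_p \cup S_{\infty}$, any such $v$ is automatically unramified in $L/K$ and satisfies $v \nmid p$; in particular $N(v)^{-r} \in \Z_p^{\times}$, and the element $1 - \phi_w N(v)^{-r}$ lies in $\Z_p[G]$. The key observation is that among the three factors $\mathcal E(\alpha_r)$, $\mathcal F(\phi_{1-r})$, $((A^S_{\phi_r})^{-1})^{\sharp}$ defining $\mathcal J_r^S$, only the last depends on $S$, since $P^1(\mathcal O_{L,S},\Z_p(r))_{\tf}$ is $S$-independent as already observed in \S\ref{sec:frac-Galois}.

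First I would compute the ratio of $L$-values
\[
    A^{S'}_{\phi_r}(\chi) / A^{S}_{\phi_r}(\chi) = L^{\ast}_S(r,\check\chi) / L^{\ast}_{S'}(r,\check\chi) = \epsilon_v(r,\check\chi),
\]
using that the missing Euler factor at the unramified place $v$ contributes $\epsilon_v(r,\check\chi)^{-1}$ and is nonzero at $s=r$. After inverting and applying the involution $\sharp$ (which swaps $\chi$- and $\check\chi$-components on $\zeta(\Q_p[G])^{\times}$), this yields
\[
    ((A^{S'}_{\phi_r})^{-1})^{\sharp} = ((A^{S}_{\phi_r})^{-1})^{\sharp} \cdot \epsilon_v(r)^{-1}, \quad\text{hence}\quad \mathcal J_r^{S'} = \mathcal J_r^S \cdot \epsilon_v(r)^{-1}
\]
as subsets of $\zeta(\Q_p[G])$. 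The second ingredient is the integrality $\epsilon_v(r)^{-1} \in \mathcal I_p(G)$: since $v$ is unramified, $\epsilon_v(r,\chi)^{-1} = \det(1 - \phi_w N(v)^{-r} \mid V_\chi)$, which is precisely the $\chi$-component of the reduced norm of $1 - \phi_w N(v)^{-r}$ viewed as an element of $\Z_p[G]$, so $\epsilon_v(r)^{-1} = \Nrd_{\Q_p[G]}(1 - \phi_w N(v)^{-r}) \in \mathcal I_p(G)$.

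With these two ingredients in hand, both assertions follow formally. For (i), combining the comparison $\mathcal J_r^{S'} = \mathcal J_r^S \cdot \epsilon_v(r)^{-1}$ with the identity $\mathcal H_p(G) \cdot \mathcal I_p(G) = \mathcal H_p(G)$ from \eqref{eq:denom-ideal}, and using that the wild kernel $K^w_{2r-2}(\mathcal O_{L,S})_p$ does not depend on $S$ (Remark \ref{rem:wild-independence}), gives
\[
    \Nrd_{\Q_p[G]}(x) \cdot \mathcal H_p(G) \cdot \mathcal J_r^{S'} \subseteq \Nrd_{\Q_p[G]}(x) \cdot \mathcal H_p(G) \cdot \mathcal J_r^S \subseteq \Ann_{\Z_p[G]}(K^w_{2r-2}(\mathcal O_{L,S})_p).
\]
For (ii), using that $\mathcal I_p(G)$ is closed under multiplication (a product of reduced norms is again a reduced norm, via block-diagonal matrices), one obtains
\[
    \Nrd_{\Q_p[G]}(x) \cdot \mathcal J_r^{S'} = \Nrd_{\Q_p[G]}(x) \cdot \mathcal J_r^S \cdot \epsilon_v(r)^{-1} \subseteq \mathcal I_p(G) \cdot \mathcal I_p(G) \subseteq \mathcal I_p(G).
\]

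There is no genuine obstacle here; the argument is essentially a formal consequence of the comparison $\mathcal J_r^{S'} = \mathcal J_r^S \cdot \epsilon_v(r)^{-1}$ together with the integrality of $\epsilon_v(r)^{-1}$. The only mildly delicate point is the careful bookkeeping of the $\sharp$-involution when translating the Euler-factor ratio into the relation between $((A^{S'}_{\phi_r})^{-1})^{\sharp}$ and $((A^{S}_{\phi_r})^{-1})^{\sharp}$.
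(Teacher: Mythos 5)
Your argument is correct and essentially the same as the paper's own proof: both rest on the $S$-independence of the $p$-adic wild kernel, the relation $\mathcal J_r^{S'} = \mathcal J_r^S \cdot (\text{Euler factors at } S'\setminus S)$, the identification of each such factor with $\Nrd_{\Q_p[G]}(1-\phi_w N(v)^{-r}) \in \mathcal I_p(G)$ (using $v \nmid p$ and $v$ unramified), and the identity $\mathcal H_p(G)\cdot\mathcal I_p(G)=\mathcal H_p(G)$. The only cosmetic differences are your induction reduction to a single place $v$ (the paper just takes the product over $S'\setminus S$) and the placement of the $\sharp$ on the Euler factor, which is immaterial since the factor lies in $\mathcal I_p(G)$ either way.
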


\begin{proof}
 Recall from Remark \ref{rem:wild-independence} that the $p$-adic wild kernel
 does not depend on $S$. Thus (i) follows once we have shown that
 \begin{equation} \label{eqn:inclusion-to-show}
  \mathcal{H}_p(G) \cdot \mathcal J_r^{S'} \subseteq \mathcal{H}_p(G) \cdot \mathcal J_r^S. 
 \end{equation}
 By definition we have 
 \[
  \mathcal J_r^{S'} = \mathcal J_r^{S} \cdot \left(\prod_{v \in S' \setminus S}
    \epsilon_v(r)^{-1}\right)^{\sharp}.
 \]
 However, each $\epsilon_v(r)^{-1} = \Nrd_{\Q_p[G]}(1 - \phi_w N(v)^{-r})$ belongs to
 $\mathcal I_p(G)$ as for $v \not\in S$ we have $v \nmid p$ and thus $N(v) \in \Z_p^{\times}$.
 This implies (ii) and also \eqref{eqn:inclusion-to-show} 
 by \eqref{eq:denom-ideal}.
\end{proof}

\subsection{Noncommutative Fitting invariants}
We briefly recall the definition and some basic properties of noncommutative
Fitting invariants introduced in \cite{MR2609173}
and further developed in \cite{MR3092262}.

Let $G$ be a finite group and let $p$ be a prime.
Let $N$ and $M$ be two $\zeta(\Z_p[G])$-submodules of
    a $\Z_p$-torsion-free $\zeta(\Z_p[G])$-module.
    Then $N$ and $M$ are called $\Nrd$-equivalent if
    there exists an integer $n$ and a matrix $U \in \GL_n(\Z_p[G])$
    such that $N = \Nrd_{\Q_p[G]}(U) \cdot M$.
    We denote the corresponding equivalence class by $[N]$.
    We say that $N$ is
    $\Nrd$-contained in $M$ (and write $[N] \subseteq [M])$
    if for all $N' \in [N]$ there exists $M' \in [M]$
    such that $N' \subseteq M'$. 
    Note that it suffices to check this property for one $N_0 \in [N]$.
    We will say that $x$ is contained in $[N]$ (and write $x \in [N]$) 
    if there is $N_0 \in [N]$ such that $x \in N_0$.

    Now let $M$ be a finitely presented $\Z_p[G]$-module and let
    \begin{equation} \label{eqn:finite_representation}
        \Z_p[G]^a \stackrel{h}{\longrightarrow} \Z_p[G]^b \longrightarrow M \longrightarrow 0
    \end{equation}
    be a finite presentation of $M$.
    We identify the homomorphism $h$ with the corresponding matrix in 
    $M_{a \times b}(\Z_p[G])$ and define
    $S(h) = S_b(h)$ to be the set of all $b \times b$ submatrices of $h$ if 
    $a \geq b$. In the case $a=b$
    we call \eqref{eqn:finite_representation} a \emph{quadratic presentation}.
    The Fitting invariant of $h$ over $\Z_p[G]$ is defined to be
    \[
      \Fitt_{\Z_p[G]}(h) = \left\{ \begin{array}{lll} [0] & \mbox{ if } & a<b \\
                        \left[\langle \Nrd_{\Q_p[G]}(H) | H \in S(h)\rangle_{\zeta(\Z_p[G])}\right] 
                        & \mbox{ if } & a \geq b. \end{array} \right.                                                                                                                  
    \]
    We call $\Fitt_{\Z_p[G]}(h)$ a \emph{(noncommutative) Fitting invariant} of $M$ over $\Z_p[G]$. 
    One defines $\Fitt_{\Z_p[G]}^{\max}(M)$ to be the unique
    Fitting invariant of $M$ over $\Z_p[G]$ which is maximal among all Fitting invariants of $M$ with respect to the partial
    order ``$\subseteq$''. If $M$ admits a quadratic presentation $h$, 
    one also puts $\Fitt_{\Z_p[G]}(M) := \Fitt_{\Z_p[G]}(h)$
    which is independent of the chosen quadratic presentation.
    The following result is \cite[Theorem 4.2]{MR2609173}.

    \begin{theorem} \label{thm:annihilation}
        If $M$ is a finitely presented $\Z_p[G]$-module, then
        \[ \mathcal H_p(G) \cdot \Fitt_{\Z_p[G]}^{\max}(M) \subseteq \Ann_{\Z_p[G]}(M).\]
    \end{theorem}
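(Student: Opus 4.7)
My plan is to reduce the assertion to a single-submatrix computation that is then resolved by a direct application of Lemma \ref{lem:ast}. Fix a finite presentation $\Z_p[G]^a \xrightarrow{h} \Z_p[G]^b \to M \to 0$ realising $\Fitt^{\max}_{\Z_p[G]}(M)$; we may assume $a \geq b$, for otherwise the Fitting invariant is $[0]$ and there is nothing to prove. A representative of $\Fitt^{\max}_{\Z_p[G]}(M)$ is then $J := \langle \Nrd_{\Q_p[G]}(H) \mid H \in S(h) \rangle_{\zeta(\Z_p[G])}$. Since $\Ann_{\Z_p[G]}(M)$ is a two-sided ideal of $\Z_p[G]$ stable under multiplication by the central subring $\zeta(\Z_p[G])$, and $\mathcal{H}_p(G)$ is itself a $\zeta(\Z_p[G])$-submodule of $\zeta(\Z_p[G])$, it will suffice to show that for every $x \in \mathcal{H}_p(G)$ and every $b \times b$ submatrix $H \in S(h)$ the central element $x \cdot \Nrd_{\Q_p[G]}(H)$ annihilates $M$.

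For this, I would observe that any such $H$ arises by restricting $h$ to $b$ of its $a$ rows, so the induced map $H: \Z_p[G]^b \to \Z_p[G]^b$ satisfies $\mathrm{image}(H) \subseteq \mathrm{image}(h)$, giving a surjection $\coker(H) \twoheadrightarrow M$. It is therefore enough to check that $x \cdot \Nrd_{\Q_p[G]}(H) \cdot y \in \mathrm{image}(H)$ for every $y \in \Z_p[G]^b$. Lemma \ref{lem:ast} supplies the identity $H^{\ast} H = \Nrd_{\Q_p[G]}(H) \cdot 1_{b \times b}$ in $M_{b \times b}(\mathfrak{M}_p(G))$, and using the centrality of $x$ and $\Nrd_{\Q_p[G]}(H)$ in $\Q_p[G]$ one rewrites
\[
x \cdot \Nrd_{\Q_p[G]}(H) \cdot y \;=\; x y \, H^{\ast} H \;=\; \bigl(y \cdot (xH^{\ast})\bigr) \, H.
\]
By the very definition of the denominator ideal $\mathcal{H}_p(G)$, the matrix $xH^{\ast}$ lies in $M_{b \times b}(\Z_p[G])$, so $y \cdot (xH^{\ast}) \in \Z_p[G]^b$ and the right-hand side visibly belongs to $\mathrm{image}(H)$, completing the core step.

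To pass from the fixed representative $J$ to the full $\Nrd$-equivalence class $\Fitt^{\max}_{\Z_p[G]}(M)$, I would use that any other representative differs from $J$ by multiplication by $\Nrd_{\Q_p[G]}(U)$ for some $U \in \GL_n(\Z_p[G])$, and that such a scalar lies in $\mathcal{I}_p(G) \subseteq \zeta(\Z_p[G])$, which stabilises the central-invariant ideal $\Ann_{\Z_p[G]}(M)$. I do not anticipate any serious obstacle: the whole theorem is really a direct marriage of Lemma \ref{lem:ast} with the defining property of $\mathcal{H}_p(G)$. The only technical subtlety is pinning down the convention for matrix action on $\Z_p[G]^b$ so that ``$b \times b$ submatrix'' of $h$ aligns correctly with ``restriction to $b$ of the $a$ generators of $\Z_p[G]^a$'', after which the computation above is routine.
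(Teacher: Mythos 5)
Your proposal is correct and is essentially the same argument as the paper's source: the paper does not reprove this statement but cites \cite[Theorem 4.2]{MR2609173}, whose proof is precisely your combination of the surjection $\coker(H) \twoheadrightarrow M$ for each $b \times b$ submatrix $H$ of $h$ with the identity $x\,\Nrd_{\Q_p[G]}(H)\cdot 1_{b\times b} = (xH^{\ast})H$, where $xH^{\ast} \in M_{b\times b}(\Z_p[G])$ by the definition of $\mathcal H_p(G)$ and Lemma \ref{lem:ast}. Your reduction to a single representative of the $\Nrd$-equivalence class, using $\Nrd_{\Q_p[G]}(U) \in \mathcal I_p(G) \subseteq \zeta(\Z_p[G])$ and $\mathcal H_p(G)\mathcal I_p(G) = \mathcal H_p(G)$, is also handled correctly.
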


\begin{lemma} \label{lem:4term-sequence}
Let $C^{\bullet} \in \mathcal D^ {\perf}(\Z_p[G])$ be a perfect complex such that
$H^i(C^{\bullet})$ is finite for all $i \in \Z$ and vanishes if $i \not= 2,3$.
Choose $\mathcal L \in \zeta(\Q_p[G])^{\times}$ such that
$\partial_p(\mathcal L) = \chi_{\Z_p[G], \Q_p} (C^{\bullet},0)$.
Then we have an equality
\[
  \Fitt_{\Z_p[G]}^{\max}((H^2(C^{\bullet}))^{\vee})^{\sharp} = \Fitt_{\Z_p[G]}^{\max}(H^3(C^{\bullet})) \cdot \mathcal L.
\]
\end{lemma}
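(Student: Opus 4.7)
My plan is to prove the lemma by representing $C^\bullet$ explicitly and then reducing the desired equality to a noncommutative Laplace-type identity for reduced norms. Since $C^\bullet$ is perfect and has only finite cohomology, the rationalization $C^\bullet \otimes^{\mathbb L}_{\Z_p}\Q_p$ is acyclic. Combined with Swan's injectivity $K_0(\Z_p[G])\hookrightarrow K_0(\Q_p[G])$, this lets me (after stabilization) choose a quasi-isomorphic representative
\[
P^\bullet \,=\, [P^1 \xrightarrow{d^1} P^2 \xrightarrow{d^2} P^3]
\]
concentrated in degrees $1,2,3$, with $H^1(P^\bullet)=0$ and $P^1=\Z_p[G]^a$, $P^2=\Z_p[G]^{a+b}$, $P^3=\Z_p[G]^b$. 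Choosing a $\Q_p[G]$-equivariant section $s$ of $d^2_{\Q_p}$, the trivialization isomorphism $\phi_t\colon P^1_{\Q_p}\oplus P^3_{\Q_p} \simeq P^2_{\Q_p}$ appearing in the refined Euler characteristic (with zero trivialization) is represented by the block matrix $M = (d^1 \mid s)\in \GL_{a+b}(\Q_p[G])$. Hence $\chi_{\Z_p[G],\Q_p}(C^\bullet,0)=\partial_p(\Nrd_{\Q_p[G]}(M))$, and $\mathcal L$ coincides with $\Nrd_{\Q_p[G]}(M)$ modulo $\Nrd_{\Q_p[G]}((\Z_p[G])^{\times})$.

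I next identify the two Fitting invariants from the matrices $d^1,d^2$. Since $d^2$ presents $H^3(C^\bullet)=\cok(d^2)$, the invariant $\Fitt^{\max}_{\Z_p[G]}(H^3(C^\bullet))$ is the $\Nrd$-equivalence class of the $\zeta(\Z_p[G])$-submodule generated by the reduced norms $\Nrd_{\Q_p[G]}(D^2_J)$, where $D^2_J$ ranges over the $b\times b$ submatrices of $d^2$ indexed by $b$-subsets $J\subseteq\{1,\ldots,a+b\}$ of columns. For the left-hand side of the lemma, the standard duality $\Fitt^{\max}(N^\vee)=\Fitt^{\max}(N)^{\sharp}$ for finite $\Z_p[G]$-modules $N$ (reflecting the Frobenius algebra structure of $\Z_p[G]$) reduces the computation to $\Fitt^{\max}(H^2(C^\bullet))$. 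Using the exact sequence $0\to P^1\xrightarrow{d^1}\ker(d^2)\to H^2(C^\bullet)\to 0$ together with compatible coordinates on $P^2$, one then checks that $\Fitt^{\max}(H^2(C^\bullet))$ is the $\Nrd$-class generated by the reduced norms $\Nrd_{\Q_p[G]}(D^1_{J^c})$ of the $a\times a$ submatrices of $d^1$ on the complementary rows $J^c$.

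The proof is completed by a noncommutative Laplace-type identity. The relation $d^2\cdot M = (0\mid I_b)$ together with the generalized adjoint $M^{\ast}=\Nrd(M)\cdot M^{-1}$ allow me to express $\Nrd(M)\cdot d^2$ as the bottom block of $M^{\ast}$, and from this to deduce, for each $J$, an identity of the form
\[
\Nrd_{\Q_p[G]}(D^1_{J^c}) \;\equiv\; \Nrd_{\Q_p[G]}(M)\cdot \Nrd_{\Q_p[G]}(D^2_J) \pmod{\Nrd_{\Q_p[G]}((\Z_p[G])^{\times})}.
\]
Letting $J$ vary, these identities assemble to give $\Fitt^{\max}(H^2(C^\bullet)) = \mathcal L\cdot \Fitt^{\max}(H^3(C^\bullet))$, which combined with the duality identification above yields the lemma. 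The main technical obstacle is the noncommutative Laplace expansion underlying this identity: in the abelian case it reduces to the classical cofactor expansion of $\det(M)$, but in general one has to either descend to each Wedderburn factor of $\Q_p[G]$, where reduced norms become determinants over a field, or exploit the generalized adjoint directly via the reduced characteristic polynomial. Keeping careful track of the $\sharp$-involution that arises naturally from Pontryagin duality of finite $\Z_p[G]$-modules is the other delicate point.
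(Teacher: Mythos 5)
Your argument cannot be checked against a detailed proof in the paper, because the paper disposes of this lemma by citing \cite[Lemma 4.4]{MR2801311} (shifted by two degrees); judged on its own merits, your proposal has genuine gaps at its two central steps. The ``standard duality'' $\Fitt_{\Z_p[G]}^{\max}(N^\vee)=\Fitt_{\Z_p[G]}^{\max}(N)^{\sharp}$ for arbitrary finite $N$ is not a known fact and fails in general: even for abelian $G$ the Fitting ideal of a finite module and that of its Pontryagin dual need not agree, which is exactly why the lemma keeps the dual and the involution on one side only. Note also that if every link of your chain were valid, it would prove the undualized identity $\Fitt^{\max}_{\Z_p[G]}(H^2(C^{\bullet}))=\Fitt^{\max}_{\Z_p[G]}(H^3(C^{\bullet}))\cdot\mathcal L$ as well, which is not true in general; so at least one link must break. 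The link that breaks, besides the duality, is your identification of $\Fitt^{\max}_{\Z_p[G]}(H^2(C^{\bullet}))$ with the class generated by the reduced norms of the $a\times a$ submatrices of $d^1$. Those minors belong naturally to the transposed map $\Z_p[G]^{a+b}\rightarrow\Z_p[G]^{a}$, which presents $\Ext^1_{\Z_p[G]}(\cok(d^1),\Z_p[G])$; since $\cok(d^1)$ has projective dimension at most one with torsion submodule $H^2(C^{\bullet})$, and since $\Ext^1_{\Z_p[G]}(Y,\Z_p[G])\cong\Ext^1_{\Z_p}(Y,\Z_p)$ vanishes for lattices $Y$ while $\Ext^1_{\Z_p[G]}(N,\Z_p[G])\cong N^\vee$ (with contragredient action) for finite $N$, the $d^1$-minors compute a Fitting invariant of $(H^2(C^{\bullet}))^\vee$, not of $H^2(C^{\bullet})$. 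Your two steps thus make opposite errors that happen to cancel; neither is correct as stated, and the duality has to enter by dualizing the complex over $\Z_p[G]$, not through a module-level identity.

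Two further points would still have to be supplied even after that correction. First, in the noncommutative theory different presentations of the same module give genuinely different Fitting invariants, and $\Fitt^{\max}$ is defined abstractly as the maximum over all of them; you assert, without argument, that the particular presentation $d^2$ of $H^3(C^{\bullet})$ (and the corresponding one on the dual side) realizes the maximum. One needs either a maximality criterion or an argument that lets the representative vary so that every presentation occurs, before an equality of maximal Fitting invariants can be deduced from an equality of the specific classes. Second, the ``noncommutative Laplace identity'' $\Nrd_{\Q_p[G]}(D^1_{J^c})\equiv\Nrd_{\Q_p[G]}(M)\cdot\Nrd_{\Q_p[G]}(D^2_J)$ modulo $\Nrd_{\Q_p[G]}((\Z_p[G])^{\times})$, which you yourself flag as the main obstacle, is left unproven; this is precisely the kind of minor calculus whose failure for reduced norms is the reason the denominator ideal $\mathcal H_p(G)$ exists in this theory, and on a Wedderburn component it becomes a Jacobi-type statement about Dieudonn\'e determinants of block submatrices that requires proof, not an appeal to the commutative cofactor expansion. (A smaller, repairable point: the existence of a free representative concentrated in degrees $1,2,3$ with $H^1=0$ needs more than rational acyclicity and Swan's theorem; one must know that finitely generated $\Z_p[G]$-modules of finite projective dimension have projective dimension at most one, e.g.\ via cohomological triviality, and then use the Krull--Schmidt property to replace projectives by free modules.)
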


\begin{proof}
 This is an obvious reformulation of \cite[Lemma 4.4]{MR2801311} (with a shift by $2$).
\end{proof}

\subsection{The relation to the leading term conjecture}
The aim of this subsection is to prove the following theorem which
describes the relation of Conjecture \ref{conj:ann-wild-kernel} 
to the leading term conjecture at $s=r$
and thus also to the ETNC for the pair $(\Q(r)_L, \Z[G])$ by Theorem \ref{thm:compare-ETNC}.
 
\begin{theorem} \label{thm:LTC-implies}
 Let $L/K$ be a finite Galois extension of number fields with Galois group $G$.
 Let $r>1$ be an integer and let
 $p$ be an odd prime. Suppose that the leading term conjecture at $s=r$
 (Conjecture \ref{conj:LTC}) holds for $L/K$ at $p$. Then Conjecture \ref{conj:ann-wild-kernel}
 is also true.
\end{theorem}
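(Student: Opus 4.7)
The plan is to construct, for each pair $(\gamma, \delta)$ with $\gamma \in E(\alpha_r)$ and $\delta \in F(\phi_{1-r}, \sigma_r)$, a perfect complex $D^{\bullet} \in \mathcal D^{\perf}(\Z_p[G])$ with finite cohomology concentrated in degrees $2$ and $3$, so that the desired annihilation can be extracted via Lemma \ref{lem:4term-sequence} and Theorem \ref{thm:annihilation}. By Proposition \ref{prop:cs-cohomology}(iv), $K_{2r-2}^w(\mathcal O_{L,S})_p$ is canonically identified with $\Sha^2(\mathcal O_{L,S}, \Z_p(r))$. Moreover, since $\mathcal J_r^S$ is by definition generated over $\zeta(\Z_p[G])$ by the products $\Nrd_{\Q_p[G]}(\gamma)\Nrd_{\Q_p[G]}(\delta) \cdot ((A_{\phi_r}^S)^{-1})^{\sharp}$, it will be enough to prove that for each such $\gamma, \delta$ and each $x \in \Ann_{\Z_p[G]}(\Z_p(r-1)_{G_L})$ the element $\Nrd(x) \Nrd(\gamma) \Nrd(\delta) ((A_{\phi_r}^S)^{-1})^{\sharp} \cdot \mathcal H_p(G)$ annihilates $\Sha^2(\mathcal O_{L,S}, \Z_p(r))$.

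To construct $D^{\bullet}$, I start from $C_{L,S}(r)$, whose cohomology by Proposition \ref{prop:conj-perfect} contains the projective $\Z_p[G]$-module summands $H_{1-r}^+(L) \otimes \Z_p$ in degree $2$ and $(H_{1-r}^+(L) \oplus H_{-r}^+(L)) \otimes \Z_p$ in degree $3$, together with the arithmetic data $H^2_c$ (whose torsion quotient is $\Sha^2$) and $H^3_c \simeq \Z_p(r-1)_{G_L}$. The defining property of $\gamma \in E(\alpha_r)$ ensures that $\exp_r^{BK} \circ \alpha_r^{-1} \circ \gamma : Y_r \to P^1(\mathcal O_{L,S}, \Z_p(r))_{\tf}$ is an integral map, and composition with the natural surjection $P^1 \to H^2_c$ provided by the exact sequence in Proposition \ref{prop:cs-cohomology}(iii) yields an integral trivialization of the free part of $H^2_c$ by the $H_{-r}^+(L) \otimes \Z_p$ summand. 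In parallel, $\delta \in F(\phi_{1-r}, \sigma_r)$ supplies an integral trivialization pairing the $H_{1-r}^+(L) \otimes \Z_p$ summands in degrees $2$ and $3$ via the Chern class isomorphism. Attaching these integral data to $C_{L,S}(r)$ through mapping cones produces a perfect complex $D^{\bullet}$ with $H^3(D^{\bullet}) \simeq \Z_p(r-1)_{G_L}$ and $H^2(D^{\bullet}) \twoheadrightarrow \Sha^2(\mathcal O_{L,S}, \Z_p(r))$, both cohomology modules being finite.

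Unwinding the construction, the refined Euler characteristic of $D^{\bullet}$ equals $\Omega_{\psi_r, S} + \partial_p(\Nrd(\gamma)\Nrd(\delta))$, the additional term coming from the auxiliary acyclic cones. Invoking Conjecture \ref{conj:LTC}(iii) in the equivalent form $\Omega_{\psi_r, S} = \partial_p((A_{\phi_r}^S)^{-1})^{\sharp}$, this gives
\[
\chi_{\Z_p[G], \Q_p}(D^{\bullet}, 0) = \partial_p(\mathcal L), \qquad \mathcal L := \Nrd(\gamma)\Nrd(\delta)\cdot ((A_{\phi_r}^S)^{-1})^{\sharp}.
\]
Lemma \ref{lem:4term-sequence} then yields $\Fitt^{\max}_{\Z_p[G]}(H^2(D^{\bullet})^{\vee})^{\sharp} = \Fitt^{\max}_{\Z_p[G]}(\Z_p(r-1)_{G_L}) \cdot \mathcal L$. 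The module $\Z_p(r-1)_{G_L}$ is cyclic as a $\Z_p$-module, hence cyclic over $\Z_p[G]$, so any $x \in \Ann_{\Z_p[G]}(\Z_p(r-1)_{G_L})$ gives a surjection $\Z_p[G]/\Z_p[G]x \twoheadrightarrow \Z_p(r-1)_{G_L}$; the quadratic presentation of the former then forces $\Nrd(x) \in \Fitt^{\max}_{\Z_p[G]}(\Z_p(r-1)_{G_L})$ by monotonicity of $\Fitt^{\max}$ under surjections. Applying the $\sharp$-involution together with Theorem \ref{thm:annihilation} and the standard facts $\mathcal H_p(G)^{\sharp} = \mathcal H_p(G)$ and $\Ann_{\Z_p[G]}(N^{\vee}) = \Ann_{\Z_p[G]}(N)^{\sharp}$ for finite $\Z_p[G]$-modules $N$, I conclude that $\mathcal H_p(G) \cdot \Nrd(x)\Nrd(\gamma)\Nrd(\delta) ((A_{\phi_r}^S)^{-1})^{\sharp}$ annihilates $H^2(D^{\bullet})$, and a fortiori its quotient $\Sha^2(\mathcal O_{L,S}, \Z_p(r))$.

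The main obstacle is the second step: constructing $D^{\bullet}$ so that the additional contribution to the refined Euler characteristic is exactly $\partial_p(\Nrd(\gamma)\Nrd(\delta))$ requires a careful comparison of the integral trivializations furnished by $\gamma$ and $\delta$ with the rational trivialization $t(\psi_r, \sigma_r, S)$ appearing in the definition of $\Omega_{\psi_r, S}$, together with a precise tracking of the $\Z_p$-lattices and splittings induced at each stage of the cone construction. Once this bookkeeping is carried out, the remainder of the argument is a formal manipulation of Fitting invariants and the $\sharp$-involution.
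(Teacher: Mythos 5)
Your overall route is the paper's: modify $C_{L,S}(r)$ by the integral data coming from $\gamma \in E(\alpha_r)$ and $\delta \in F(\phi_{1-r},\sigma_r)$ so as to obtain a perfect complex with finite cohomology concentrated in degrees $2$ and $3$, with $H^3$ given (essentially) by $\Z_p(r-1)_{G_L}$ and $H^2$ surjecting onto $\Sha^2(\mathcal O_{L,S},\Z_p(r)) \simeq K^w_{2r-2}(\mathcal O_{L,S})_p$; then feed Conjecture \ref{conj:LTC}(iii) into Lemma \ref{lem:4term-sequence}, place $\Nrd_{\Q_p[G]}(x)$ in the maximal Fitting invariant of the cyclic module $\Z_p(r-1)_{G_L}$, and conclude with Theorem \ref{thm:annihilation}; your final $\sharp$-manipulations and the passage to the quotient $\Sha^2$ are sound. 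However, two points are genuinely missing. First, $E(\alpha_r)$ and $F(\phi_{1-r},\sigma_r)$ consist of \emph{endomorphisms}, and your construction needs $\gamma$ and $\delta$ to be automorphisms: otherwise the cohomology of $D^{\bullet}$ is not finite and Lemma \ref{lem:4term-sequence} does not apply, and this case cannot be discarded since $\Nrd_{\Q_p[G]}(\gamma)$ may be a nonzero zero-divisor. The paper removes this at the outset by continuity of the reduced norm for the $p$-adic topology (approximate by automorphisms; annihilator ideals are closed); some such reduction must be added.

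Second, the step you yourself flag as ``the main obstacle'' -- constructing $D^{\bullet}$ and showing the auxiliary cone contributes exactly $\partial_p(\Nrd_{\Q_p[G]}(\gamma)\Nrd_{\Q_p[G]}(\delta))$ -- is the actual content of the proof and is left unproved. Concretely one must: lift $\exp_r^{BK}\alpha_r^{-1}\gamma$ from $P^1(\mathcal O_{L,S},\Z_p(r))_{\tf}$ to $P^1(\mathcal O_{L,S},\Z_p(r))$ using projectivity of $Y_r$; assemble the map $\xi_r\colon Y_r \to (H_{1-r}^+(L)\otimes\Z_p)\oplus H^2_c(\mathcal O_{L,S},\Z_p(r))$ and verify $\xi_r\otimes\Q_p=(\delta\oplus\id)\,t(\psi_r,\sigma_r,S)\,\gamma$ (whence injectivity); choose an integral chain map from $Y_r\{2,3\}$ into a representative of $C_{L,S}(r)$ inducing $\xi_r$ in degree $2$ and a suitable integral map onto the $Y_r$-summand of $H^3$ in degree $3$; and compute the Euler characteristic of the subcomplex with the trivialization forced by compatibility. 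Your variant (using the identity on the degree-$3$ summand) does give the offset $\partial_p(\Nrd(\gamma)\Nrd(\delta))$ and avoids the paper's $p^n$-scaling of $\gamma$ and the attendant $\cok(p^n\gamma)$-term, but none of this bookkeeping is carried out, and it is exactly where the work lies. Two slips in your sketch should also be corrected: the natural map $P^1(\mathcal O_{L,S},\Z_p(r))\to H^2_c(\mathcal O_{L,S},\Z_p(r))$ is \emph{not} surjective (its cokernel is precisely $\Sha^2$, which is why $H^2(D^{\bullet})$ surjects onto the wild kernel), and $\delta$ does not ``pair the $H_{1-r}^+$ summands in degrees $2$ and $3$''; it enters as the component of $\xi_r$ mapping $Y_r$ into $H_{1-r}^+(L)\otimes\Z_p$ after $\sigma_r^1$ and the inverse Chern class map.
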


\begin{corollary}
 Fix an odd prime $p$ and suppose that $L$ is abelian over $\Q$.
 Then the leading term conjecture at $s=r$ and Conjecture \ref{conj:ann-wild-kernel}
 both hold for almost all $r>1$ (and all even $r$ if $L$ is totally real).
\end{corollary}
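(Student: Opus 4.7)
The strategy is to verify the leading term conjecture (Conjecture \ref{conj:LTC}) at $L/K$ and $p$ for all but finitely many $r>1$ (and all even $r>1$ in the totally real case), and then invoke Theorem \ref{thm:LTC-implies} to obtain Conjecture \ref{conj:ann-wild-kernel}.

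First, I would dispatch parts (i) and (ii) of Conjecture \ref{conj:LTC}. By Proposition \ref{prop:Schneider-equivalence}, part (i) is equivalent to Schneider's Conjecture \ref{conj:Schneider} at $r$ and $p$, and Remark \ref{rmk:Schneider-almost} guarantees that this is true for all but finitely many $r$; if in addition $L$ is totally real and $r$ is even, Theorem \ref{thm:Schneider}(ii) gives this unconditionally. Part (ii) is equivalent, via Proposition \ref{prop:Gross-equivalent}, to Gross' Conjecture \ref{conj:Gross}. Under the hypothesis that $L/\Q$ is abelian every irreducible character $\chi$ of $G$ factors through an absolutely abelian quotient, so Gross' conjecture follows for all such $\chi$ and every $r$ from case (ii) of Theorem \ref{thm:Gross}.

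Next I would handle part (iii). The key input is that the full ETNC for the pair $(\Q(r)_L, \Z[G])$ is known whenever $L/\Q$ is abelian: this is the combined work of Burns--Greither \cite{MR1992015} and Flach \cite{MR2863902} (see also Huber--Kings \cite{MR2002643}), already cited in the proof of Theorem \ref{thm:Gross}(ii). In particular $T\Omega(\Q(r)_L,\Z[G]) = 0$ in $K_0(\Z[G],\R)$, so its $p$-component $T\Omega(\Q(r)_L,\Z[G])_p$ vanishes in $K_0(\Z_p[G], \Q_p)$. Once parts (i) and (ii) have been established, Theorem \ref{thm:compare-ETNC}(ii) identifies this with $T\Omega(L/K,r)_p$, so the latter vanishes as well, which is precisely the assertion of part (iii).

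Combining these observations, Conjecture \ref{conj:LTC} holds for $L/K$ at $p$ for all $r>1$ outside a finite exceptional set (and for every even $r>1$ when $L$ is totally real). Applying Theorem \ref{thm:LTC-implies} in this range yields Conjecture \ref{conj:ann-wild-kernel} for the same $r$, completing the proof. The only genuine obstacle is the current state of Schneider's conjecture, which is why the statement must be weakened to \emph{almost all} $r$ in general; in the totally real even case this obstacle is absent, which is exactly why the conclusion there is uniform in $r$.
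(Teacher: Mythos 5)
Your argument is correct and follows essentially the same route as the paper: Schneider's conjecture for almost all $r$ (all even $r$ in the totally real case), rationality/Gross, and then the vanishing of $T\Omega(\Q(r)_L,\Z[G])$ for absolutely abelian $L$ transported through Theorem \ref{thm:compare-ETNC} and fed into Theorem \ref{thm:LTC-implies}. The only slight imprecision is the attribution of the ETNC at the positive twist $r>1$: the references \cite{MR1992015}, \cite{MR2863902}, \cite{MR2002643} are invoked in the paper for the $(1-r)$-twist, whereas the validity for all $r\in\Z$ (and in particular $r>1$) for absolutely abelian fields is due to Burns and Flach \cite{MR2290586}, which is the citation the paper uses here.
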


\begin{proof}
 As $L/\Q$ is abelian, the ETNC for the pair $(\Q(r)_L, \Z[G])$ holds for all $r \in \Z$ 
 by work of Burns and Flach \cite{MR2290586}. Now fix an odd prime $p$.
 Then Schneider's conjecture holds for almost all $r$ by Remark \ref{rmk:Schneider-almost}
 and for all even $r>1$ if $L$ is totally real by Theorem \ref{thm:Schneider}.
 Thus the result follows from Theorem \ref{thm:compare-ETNC} and Theorem \ref{thm:LTC-implies}.
\end{proof}

\begin{proof}[Proof of Theorem \ref{thm:LTC-implies}]
 Recall the notation from \S \ref{sec:frac-Galois}.
 Let $\gamma \in E(\alpha_r)$, $\delta \in F(\phi_{1-r}, \sigma_r)$ and 
 $x \in \Ann_{\Z_p[G]}(\Z_p(r-1)_{G_L})$. We have to show that
 \begin{equation} \label{eqn:to-show}
 \mathcal H_p(G) \cdot \Nrd_{\Q_p[G]}(x) \cdot \Nrd_{\Q_p[G]}(\gamma) \cdot \Nrd_{\Q_p[G]}(\delta) \cdot \left((A_{\phi_r}^{S})^{-1}\right)^{\sharp}
 \subseteq \Ann_{\Z_p[G]}(K_{2r-2}^w(\mathcal{O}_{L,S})_p).
 \end{equation}
 As the reduced norm is continuous for the $p$-adic topology, we may and do assume
 that $\gamma$ and $\delta$ are $\Q_p[G]$-automorphisms (and not just endomorphisms).
 By the definition of $E(\alpha_r)$ we therefore get an injection
 \begin{equation} \label{eqn:first-map}
  \exp_r^{BK} \alpha_r^{-1}\gamma: Y_r \longrightarrow P^1(\mathcal{O}_{L,S}, \Z_p(r))_{\tf}
 \end{equation}
 which we may lift to an injection
 \[
  \eta_r: Y_r \longrightarrow P^1(\mathcal{O}_{L,S}, \Z_p(r)),
 \]
 since $Y_r$ is a projective $\Z_p[G]$-module. Likewise, by the definition of 
 $F(\phi_{1-r}, \sigma_r)$ we obtain a map
 \begin{equation} \label{eqn:second-map}
  \delta \phi_{1-r}^{-1} (ch_{r,1}^{(p)})^{-1} \sigma_r^1: 
  P^1(\mathcal{O}_{L,S}, \Z_p(r))_{\tf} \longrightarrow H_{1-r}^+(L) \otimes \Z_p.
 \end{equation}
 We may therefore define a $\Z_p[G]$-homomorphism
 \[
  \xi_r: Y_r \longrightarrow (H_{1-r}^+(L) \otimes \Z_p) \oplus H_c^2(\mathcal{O}_{L,S}, \Z_p(r))
 \]
 such that the projection onto $H_c^2(\mathcal{O}_{L,S}, \Z_p(r))$ is the composition of
 $\eta_r$ and the natural map 
 $P^1(\mathcal{O}_{L,S}, \Z_p(r)) \rightarrow H_c^2(\mathcal{O}_{L,S}, \Z_p(r))$,
 whereas the projection onto 
 $H_{1-r}^+(L) \otimes \Z_p$ is given by 
 the composite map of \eqref{eqn:first-map} and \eqref{eqn:second-map}.
 We then have an equality
 \begin{equation} \label{eqn:lambda_r}
  \xi_r \otimes_{\Z_p} \Q_p = (\delta \oplus \id_{H^2_c(\mathcal{O}_{L,S}, \Q_p(r))}) t(\psi_r, \sigma_r, S) \gamma
 \end{equation}
 which implies that $\xi_r$ is injective.
 
 The perfect complex $C_{L,S}(r)$ is isomorphic in $\mathcal{D}(\Z_p[G])$ to a complex
 $A \rightarrow B$ of $\Z_p[G]$-modules of finite projective dimension, 
 where $A$ is placed in degree $2$.
 Choose $n \in \N$ such that $p^n \gamma(Y_r) \subseteq Y_r$. As $Y_r$ is projective,
 we may construct the following commutative diagram 
 of $\Z_p[G]$-modules with exact rows and columns.
 \[ \xymatrix{
 Y_r \ar@{=}[r] \ar@{^{(}->}[d]^{\xi_r} & Y_r \ar@{^{(}->}[d] \ar[r]^0 & Y_r \ar@{^{(}->}[d] \ar@{=}[r] & Y_r \ar@{^{(}->}[d]^{p^n \gamma} \\
 (H_{1-r}^+(L) \otimes \Z_p) \oplus H_c^2(\mathcal{O}_{L,S}, \Z_p(r)) \ar@{^{(}->}[r] \ar@{->>}[d]
  & A \ar[r] \ar@{->>}[d] & B \ar@{->>}[r] \ar@{->>}[d] & H_c^3(\mathcal{O}_{L,S}, \Z_p(r)) \oplus Y_r \ar@{->>}[d] \\
  \cok(\xi_r) \ar@{^{(}->}[r] & A' \ar[r] & B' \ar@{->>}[r] & H_c^3(\mathcal{O}_{L,S}, \Z_p(r)) \oplus \cok(p^n \gamma)
 } 
 \]
 The arrow $A' \rightarrow B'$ defines a complex $C'$ in $\mathcal{D}^{\perf}(\Z_p[G])$
 (where we place $A'$ in degree $2$; note that $C'$ depends on a lot of choices
 which we suppress in the notation). The cohomology groups of this complex
 are finite and vanish outside degrees $2$ and $3$. Thus the zero map is the unique
 trivialization of this complex. Likewise the arrow $Y_r \stackrel{0}{\rightarrow} Y_r$
 defines the complex $Y_r\{2,3\}$ in $\mathcal{D}^{\perf}(\Z_p[G])$ and we choose 
 $t_{\delta,n} := p^n (\delta^{-1} \oplus \id_{H_{-r}^+(L) \otimes \Q_p})$ as a trivialization.
 Using equation \eqref{eqn:lambda_r} we compute
 \begin{eqnarray*}
  -\partial_p(A_{\phi_r}^S)^{\sharp}
    & = & \chi_{\Z_p[G], \Q_p}(C_{L,S}(r), t(\psi_r,\sigma_r,S)) \\
    & = & \chi_{\Z_p[G], \Q_p}(C',0) + \chi_{\Z_p[G], \Q_p}(Y_r\{2,3\},t_{\delta,n})\\
  & = & \chi_{\Z_p[G], \Q_p}(C',0) + \partial_p(\Nrd_{\Q_p[G]}(t_{\delta,n})),
 \end{eqnarray*}
 where the first equality is Conjecture \ref{conj:LTC}. Now Lemma \ref{lem:4term-sequence}
 implies the first equality in the following computation.
 \begin{eqnarray*}
  \Fitt_{\Z_p[G]}^{\max}(\cok(\xi_r)^{\vee})^{\sharp} & = &
    \Fitt_{\Z_p[G]}^{\max}(H_c^3(\mathcal{O}_{L,S}, \Z_p(r)) \oplus \cok(p^n \gamma)) \cdot 
    \left((A_{\phi_r}^S)^{\sharp} \Nrd_{\Q_p[G]}(t_{\delta,n})\right)^{-1} \\
  & \supseteq & \Fitt_{\Z_p[G]}^{\max}\left(H_c^3(\mathcal{O}_{L,S}, \Z_p(r))\right) \cdot 
    \Fitt_{\Z_p[G]}(\cok(p^n \gamma)) \cdot \\ & & \quad \left((A_{\phi_r}^S)^{\sharp} \Nrd_{\Q_p[G]}(t_{\delta,n})\right)^{-1}\\
  & = & \Fitt_{\Z_p[G]}^{\max}\left(H_c^3(\mathcal{O}_{L,S}, \Z_p(r))\right) \cdot 
    \Nrd_{\Q_p[G]}(p^n \gamma) \cdot \\ & & \quad \left((A_{\phi_r}^S)^{\sharp} \Nrd_{\Q_p[G]}(t_{\delta,n})\right)^{-1}\\
  & = & \Fitt_{\Z_p[G]}^{\max}\left(H_c^3(\mathcal{O}_{L,S}, \Z_p(r))\right) \cdot 
    \Nrd_{\Q_p[G]}(\gamma) \cdot \Nrd_{\Q_p[G]}(\delta) \cdot \\ & & \quad \left((A_{\phi_r}^S)^{\sharp}\right)^{-1}\\
  & \ni & \Nrd_{\Q_p[G]}(x) \cdot \Nrd_{\Q_p[G]}(\gamma) \cdot \Nrd_{\Q_p[G]}(\delta) \cdot \left((A_{\phi_r}^S)^{\sharp}\right)^{-1}.
 \end{eqnarray*}
 The inclusion follows from \cite[Proposition 3.5]{MR2609173}. The second equality
 holds, since $p^n \gamma: Y_r \rightarrow Y_r$ is a quadratic presentation of $\cok(p^n \gamma)$.
 The definition of $t_{\delta,n}$ gives the third equality. Finally, the $\Z_p[G]$-module
 $H_c^3(\mathcal{O}_{L,S}, \Z_p(r))$ is cyclic 
 by Proposition \ref{prop:cs-cohomology} (ii) 
 and thus $\Nrd_{\Q_p[G]}(x)$ belongs to
 its maximal Fitting invariant by \cite[Theorem 3.1(i) and Theorem 5.1(i)]{MR3092262}.
 As $\Ann_{\Z_p[G]}(\cok(\xi_r)^{\vee})^{\sharp}$ equals $\Ann_{\Z_p[G]}(\cok(\xi_r))$,
 Theorem \ref{thm:annihilation} implies that
 \begin{equation} \label{eqn:part1}
 \mathcal H_p(G) \cdot \Nrd_{\Q_p[G]}(x) \cdot \Nrd_{\Q_p[G]}(\gamma) \cdot \Nrd_{\Q_p[G]}(\delta) \cdot \left((A_{\phi_r}^{S})^{-1}\right)^{\sharp}
 \subseteq \Ann_{\Z_p[G]}(\cok(\xi_r)).
 \end{equation}
 However, the composition of $\xi_r$ and the projection onto 
 $H_c^2(\mathcal{O}_{L,S}, \Z_p(r))$ factors through $P^1(\mathcal{O}_{L,S}, \Z_p(r))$ via
 $\eta_r$ and thus there is a surjection of $\cok(\xi_r)$ onto
 \begin{equation} \label{eqn:part2}
  \cok\left(P^1(\mathcal{O}_{L,S}, \Z_p(r)) \rightarrow
    H_c^2(\mathcal{O}_{L,S}, \Z_p(r))\right) \simeq \Sha^2(\mathcal{O}_{L,S}, \Z_p(r))
    \simeq K_{2r-2}^w(\mathcal{O}_{L,S})_p,
 \end{equation}
 where the last isomorphism is Proposition \ref{prop:cs-cohomology} (iv).
 Now \eqref{eqn:part1} and \eqref{eqn:part2} imply \eqref{eqn:to-show}.
\end{proof}

\begin{remark}
	The proof also shows that Conjecture \ref{conj:LTC} implies
	the containment \eqref{eqn:expectation}.
\end{remark}

\subsection{The relation to a conjecture of Burns, Kurihara and Sano}
Let $L/K$ be an \emph{abelian} extension of number fields with Galois group $G$
and let $r$ be an integer.
In \cite{BKS} the authors define a certain ideal in terms of
`generalized Stark elements of weight $-2r$' (in particular, this
involves the equivariant $L$-value $L_S^{\ast}(r)$) and conjecture that this
ideal coincides with the initial Fitting ideal of 
$H^2_{\et}(\mathcal{O}_{L,S}, \Z_p(1-r))$. In this final subsection,
we will explain the relation of their conjecture to our Conjecture
\ref{conj:ann-wild-kernel} if $r>1$.

So let us henceforth assume that $r>1$. Fix a second finite set $T$
of places of $K$, which is disjoint from $S$. Following \cite[\S 3.2]{BKS}
we define $R\Gamma_T(\mathcal{O}_{L,S}, \Z_p(1-r))$ to be a complex that
lies in an exact triangle in the derived category $\mathcal{D}(\Z_p[G])$
of the form
\begin{equation} \label{eqn:T-triangle}
	R\Gamma_T(\mathcal{O}_{L,S}, \Z_p(1-r)) \rightarrow
	R\Gamma(\mathcal{O}_{L,S}, \Z_p(1-r)) \rightarrow
	\bigoplus_{w \in T(L)} R\Gamma(L(w), \Z_p(1-r)),
\end{equation}
where the second arrow is induced by the natural morphism. For each $i \in \Z$
we abbreviate $H^iR\Gamma_T(\mathcal{O}_{L,S}, \Z_p(1-r))$ by
$H^i_T(\mathcal{O}_{L,S}, \Z_p(1-r))$. 

The conjecture of Burns, Kurihara
and Sano \cite[Conjecture 3.5]{BKS} concerns the initial Fitting ideal
and thus also the annihilator ideal of the finite cohomology group 
$H^2_T(\mathcal{O}_{L,S}, \Z_p(1-r))$.
In order to relate their conjecture to ours, we have to determine the relation
between this cohomolgy group and the wild kernel 
$K_{2r-2}^w(\mathcal{O}_{L,S})_p$. Artin-Verdier duality and the triangle
\eqref{eqn:T-triangle} give an exact triangle in $\mathcal{D}(\Z_p[G])$ 
of the form 
\begin{equation} \label{eqn:T-triangle-dual}
\bigoplus_{w \in T(L)} R\Gamma(L(w), \Z_p(1-r)) \rightarrow
C_{S,T}^{\bullet}(r) \rightarrow D_S^{\bullet}(r)
\end{equation}
(see \cite[(6)]{MR1981031} or \cite[\S 4.1]{BKS}),
where we have set
\begin{eqnarray*}
	C_{S,T}^{\bullet}(r) & := & R\Gamma_T(\mathcal{O}_{L,S}, \Z_p(1-r))[1]
	\oplus (H_r^+(L) \otimes \Z_p)[-1];\\
	D_S^{\bullet}(r) & := &
	R\Hom_{\Z_{p}}(R\Gamma_c(\mathcal{O}_{L,S}, \Z_p(r)), \Z_p)[-2].
\end{eqnarray*}
For any $\Z_p$-module $M$ we write $M^{\ast}$ for its $\Z_p$-linear dual.
We henceforth assume that Schneider's conjecture holds. Then Proposition
\ref{prop:cs-cohomology} implies that
$H^1_c(\mathcal{O}_{L,S}, \Z_p(r)) \simeq H_{-r}^+(L) \otimes \Z_p$
is $\Z_p[G]$-projective. Thus the complex $D_S^{\bullet}(r)$ is
acyclic outside degrees $0$ and $1$ and we have canonical isomorphisms
of $\Z_p[G]$-modules
\begin{eqnarray*}
	H^0(D_S^{\bullet}(r))_{\tor} & \simeq & 
	H^3_c(\mathcal{O}_{L,S}, \Z_p(r))^{\vee}, \\
	H^0(D_S^{\bullet}(r))_{\tf} & \simeq & 
	H^2_c(\mathcal{O}_{L,S}, \Z_p(r))^{\ast}, \\
	H^1(D_S^{\bullet}(r)) & \simeq & 
	(H^2_c(\mathcal{O}_{L,S}, \Z_p(r))_{\tor})^{\vee} \oplus 
	(H_r^+(L) \otimes \Z_p).
\end{eqnarray*}
In particular, the triangle \eqref{eqn:T-triangle-dual} yields
a right exact sequence of $\Z_p[G]$-modules
\[
\bigoplus_{w \in T(L)} \Z_p(r-1)_{G_w} \rightarrow
H^2_T(\mathcal{O}_{L,S}, \Z_p(1-r)) \rightarrow
(H^2_c(\mathcal{O}_{L,S}, \Z_p(r))_{\tor})^{\vee} \rightarrow 0.
\]
Moreover, we have a surjection 
\[
	H^2_c(\mathcal{O}_{L,S}, \Z_p(r)) \twoheadrightarrow
	K_{2r-2}^w(\mathcal{O}_{L,S})_p
\]
by Proposition \ref{prop:cs-cohomology}(iv). Thus
\cite[Conjecture 3.5]{BKS} and our conjecture predict annihilators of
the torsion subgroup and a finite quotient of
$H^2_c(\mathcal{O}_{L,S}, \Z_p(r))$, respectively.
In order to compare the two conjectures we will hence assume that
$H^2_c(\mathcal{O}_{L,S}, \Z_p(r))$ is finite so that
we have an inclusion
\begin{equation} \label{eqn:Ann-inclusion}
\Ann_{\Z_p[G]}(H^2_T(\mathcal{O}_{L,S}, \Z_p(1-r)))^{\sharp} \subseteq 
\Ann_{\Z_p[G]}(K_{2r-2}^w(\mathcal{O}_{L,S})_p).
\end{equation}
By Proposition \ref{prop:cs-cohomology}(v) and \eqref{eqn:rank-Betti}
this implies that $L$ is totally real and that $r$ is odd.
Since $H_{-r}^+(L)$ vanishes in this case,
the wedge product which occurs in \cite[Conjecture 3.5]{BKS} is
empty (see \cite[Hypothesis 2.2]{BKS}) 
so that this conjecture predicts that the initial Fitting ideal of
$H^2_T(\mathcal{O}_{L,S}, \Z_p(1-r))$ is generated by an element
$\eta_{L/K,S,T}(r)$ as defined in \cite[\S 2.2]{BKS}. By its very definition
(and taking \cite[Remark 2.5]{BKS} into account) this element is given by
\[
	\eta_{L/K,S,T}(r)^{\sharp} = 
	\left(\prod_{v \in T} (1 - \phi_w N(v)^{1-r})\right) \cdot
	\Nrd_{\C_p[G]}(\vartheta_r^{(j)}) \cdot 
	j(L_S^{\ast}(r)).
\]
Now the inclusion \eqref{eqn:Ann-inclusion}, Remark \ref{rem:easy-annihilator}
and Example \ref{ex:tot-real-odd} imply the following result.
\begin{prop} \label{prop:relation-BKS}
	Let $L/K$ be an abelian extension of totally real fields and let $r>1$
	be an odd integer. 
	Assume that Schneider's Conjecture \ref{conj:Schneider}
	and Gross' Conjecture \ref{conj:Gross} both hold.
	Then \cite[Conjecture 3.5]{BKS}
	 for all $T$ implies Conjecture
	\ref{conj:ann-wild-kernel}.
\end{prop}

\begin{remark}
	The conjecture of Burns, Kurihara and Sano indeed involves the choice
	of a certain idempotent $\varepsilon$ of $\Z_p[G]$.
	Under the hypotheses of Proposition \ref{prop:relation-BKS} 
	it suffices to consider
	their conjecture for $\varepsilon = 1$ (which implies their conjecture
	for all admissible idempotents). However, we point out that in general
	$1$ is not an admissible idempotent. For instance, this happens if
	$L/K$ is a CM-extension. If we further assume that $r$ is even, then
	$e^- := (1-c)/2$ is admissible, where $c \in G$ denotes complex conjugation.
	In this case $e^- H^2_c(\mathcal{O}_{L,S}, \Z_p(r))$ is finite
	and one can formulate an analogue of Proposition 
	\ref{prop:relation-BKS} on minus parts.
\end{remark}
\nocite*
\bibliography{wild-kernels-bib}{}
\bibliographystyle{amsalpha}

\end{document}